\theoremstyle{plain}
\newtheorem{theorem}{Theorem}[section]
\newtheorem{lemma}[theorem]{Lemma}
\newtheorem{corollary}[theorem]{Corollary}
\newtheorem{proposition}[theorem]{Proposition}
\newtheorem{example}[theorem]{Example}
\newtheorem{definition}[theorem]{Definition}
\newtheorem{remark}[theorem]{Remark}
\theoremstyle{plain}
\newtheorem{prop}[theorem]{Proposition}
\newcommand{\supp}{\mathrm{supp}}
\def \R {{\mathbb {R}}}
\newcommand{\Gi}{\mathbb{G}}
\newcommand{\average}{{\mathchoice {\kern1ex\vcenter{\hrule height.4pt
width 6pt
depth0pt} \kern-9.7pt} {\kern1ex\vcenter{\hrule height.4pt width 4.3pt
depth0pt}
\kern-7pt} {} {} }}
\newcommand{\N}{\mathcal{N}}
\def\p{\partial}
\newcommand{\Y}{Y}
\def \R {{\mathbb {R}}}
\def \N {{\mathbb N}}
\def  \e {{\varepsilon}}
\def \tilde {\widetilde}
\def \xn {{x}}
\def \yn {{y}}
\def \Gn {{\Gamma}}
\def \x {{\hat x}}
\def \y {{\hat y}}
\def \z {{\hat z}}
\def \R {{\mathbb {R}}}
\def \N {{\mathbb {N}}}
\def \e {{\varepsilon}}
\def \phi {{\varphi}}
\def \tilde {\widetilde}
\numberwithin{equation}{section} \makeatletter
\title[Schauder estimates]{Schauder estimates at the boundary \\ for sub-Laplacians in Carnot groups }
\author{Annalisa Baldi, Giovanna Citti, Giovanni Cupini}
\address{Dipartimento di Matematica, Piazza di Porta S. Donato 5,
40126 Bologna, Italy}\email{annalisa.baldi2@unibo.it, giovanna.citti@unibo.it, giovanni.cupini@unibo.it}
\keywords{Schauder estimates at the boundary, Poisson kernel, 
subriemannian geometry, Carnot groups
}
\subjclass{35R03, 35B65, 35J25}
\thanks{ 
The authors are supported by University of Bologna, funds for selected research 
topics, and by the People
Program (Marie Curie Actions) of the European Union's Seventh
Framework Program FP7/2007-2013/ under REA grant agreement n. 607643. 
The first and the third author are supported by GNAMPA of INdAM, Italy. 
}
\begin{document}

\begin{abstract} 
In this paper we prove Schauder estimates at the boundary for sub-Laplacian type operators in Carnot groups. While internal Schauder estimates have been deeply studied, up to now subriemannian estimates at the boundary are 
known only in the Heisenberg groups. The proof of these estimates in the Heisenberg setting, 
due to Jerison (\cite{Jerison}),  
is based on the Fourier transform technique and can not be repeated in general Lie groups. After the result of Jerison no new contribution to the boundary problem has been provided. 
In this paper we introduce a new approach, which allows to build a Poisson kernel starting from the fundamental solution, from which we deduce the Schauder estimates at non 
characteristic boundary points. 
\end{abstract}

\maketitle

\tableofcontents

 \section{Introduction}

\subsection{Aim of this work}
The aim of this work is to prove Schauder estimates at the boundary for sub-Laplacian type operators in Carnot groups. 

As it is well known, Schauder estimates at the boundary in the Euclidean setting are based on two main ingredients. The first one, which is the core of the Schauder method, is the local reduction of general uniformly elliptic operators to the Laplace operator. The second one, which seems elementary in the Euclidean setting, is a  reflection technique which reduces the boundary Schauder estimates to internal ones. 
Unfortunately, this technique can not be applied in the strong anisotropic setting of a Carnot group, since a Laplace type operator 
in this framework is not invariant with respect to reflection, nor can be approximated by any invariant operator. 
In the special case of the Heisenberg group, Schauder estimates are a classical result due to Jerison (see \cite{Jerison}), 
but not even this technique, based on the standard Fourier transform, can be extended to general Lie groups, whose geometry is not related to the Fourier transform. 
After that, no new contribution has been provided to the problem, which is still open, 
while its solution would be necessary for the development 
of nonlinear PDE's theory in this setting.  

In this paper we introduce a completely different approach, which is new even in the Riemannian setting, and which allows to built a Poisson kernel starting from the knowledge of a smooth fundamental solution for the problem on the whole space. 
\subsection{Carnot groups}\label{introcarnot}

%===========================================================
A Carnot group $\Gi$ can be identified with $\R^n$ with a polynomial group law $(\Gi,\cdot)$, 
whose Lie algebra ${\mathfrak{g}}$  admits a  step $\kappa$ stratification. Precisely 
there exist linear subspaces $V^1,...,V^\kappa$ such that
\begin{equation}\label{strat}
\mathfrak{g}=V^1\oplus...\oplus V^\kappa,\quad [V^1,V^{i-1}]=V^{i},{\textrm{ if }} i\leq\kappa, 
\quad [V^1,V^{\kappa}]=\{0\}.
\end{equation}
We will call horizontal tangent bundle the subspace $V^1$, and we will choose a basis for it, denoted 
$\left\{X_1, \cdots, X_m\right\}$. By the assumption on the Lie algebra, 
this basis can be completed to a basis $\left\{X_1, \cdots, X_n\right\}$ of $\mathfrak{g}$
with the list of their commutators. On the vector space $V^1$ we define a Riemannian metric 
which makes orthonormal the vector fields $X_1, \cdots, X_m$. 
Several equivalent  left invariant distances 
$d$ can be introduced on the whole space 
requiring that their restriction to $V^1$ is equivalent to the 
fixed Riemannian metric
 (see for example Nagel, Stein and Wainger in \cite{NSW}).
%The stratification of the Lie algebra induces a family of ani\-so\-tro\-pic dilations
%$\delta_\lambda$ ($\lambda>0$)
%on $\mathfrak g$ and therefore, through exponential map, on $\Gi$.
%=====================================================================
%A Carnot group $\mathbb{G}$ of dimension $n$ is a Lie group whose Lie algebra admits a
 %stratification 
%\begin{equation}\label{strat}\mathfrak{g}= V^1\oplus \cdots \oplus V^\kappa\text{ with }
%[V^i,V^j]=V^{i+j}\text{ and  }[V^i, V^\kappa]=0.\end{equation} 
%The subspace $V^1$ is called the horizontal tangent bundle and  its basis is 
%denoted by $\left\{X_1, \cdots, X_m\right\}$. 
%A left invariant control distance $d_{cc}$ can be defined in terms of these vector fields, 
%and its properties has been deeply studied (see for example by Nagel, Stein and
 %Wainger (see \cite{NSW}). 
The subriemannian gradient of a regular function $f$ will be denoted $\nabla f=(X_1f, \cdots, X_m f)$ and $f$ will be  called of class $C^1$ if this gradient is continuous 
with respect to the distance $d$. 
More generally, spaces of H\"older continuous functions $C^{k, \alpha}$ can be defined
in terms of this distance and this gradient. We will study here a subelliptic operator 
expressed as follows:
\begin{equation}\label{laplacoperator}\Delta = \sum_{i=1}^m (X_i^2 + b_i X_i),\end{equation}
with regular coefficients $b_i$. 
Operators of this type are hypoelliptic and have been deeply studied after the first works of  Folland
and  Stein \cite{FollandStein}, Rothschild and Stein \cite{RS}, Jerison and Sanchez-Calle \cite{JSC}, 
 Fefferman and Sanchez-Calle \cite{FSC}, 
Kohn and Nirenberg \cite{KN},  and 
 Jerison \cite{Jerison, Jerison2} (see also \cite{BLU} for a recent monograph).  
 Their fundamental solution 
$\Gamma_\Delta$ is of class $C^\infty$ far from the diagonal 
and it can be estimated in term of the distance 
as follows \begin{equation}\label{gammabehavior}\Gamma_\Delta(x,y)\approx \frac{1}{d^{Q-2}(x,y)},\end{equation}
for a suitable integer $Q$, called homogeneous dimension of the space (see \eqref{homodim} for a precise definition). A kernel with the behavior of $\Gamma_\Delta$ is called of local type 2. 
In general we will say that a kernel $K$
is of local type $\lambda$ with respect to the distance $d$ if 
for every open bounded set $V$ and
for every $p \geq 0$ there exists a positive constant $C_p$ such that, for every $x, y \in V$, with $x\not= y$ 
\begin{equation}
	\label{e:sileva}
|X_{i_1}\cdots X_{i_p}K(x, y)|\leq C_p  d(x,  y)^{\lambda-p-2}  \Gamma_\Delta(x,y).\end{equation}
%In the sequel we will often identify a kernel with the integral operator associated 
%to the kernel. More precise definitions are given in Section 2 and in Section 4.
%In general we will say that a kernel 
%is of local type $\lambda$ with respect to the distance $d$ if 
%for every open bounded set $V$, 
%for every $p$ there exists a positive constant $C_p$ such that, for every $x, y \in V$, with $x\not= y$ 
%$$ |X_{i_1}\cdots X_{i_p}k(x, y)|\leq C_p  d(x,  y)^{\lambda-p-2}  \Gamma_\Delta(x,y).$$
A well established theory of singular integrals in H\"ormander setting (due to Folland and Stein 
\cite{FollandStein}, Rothschild and Stein \cite{RS}, Greiner and Stein \cite{GreinerStein}) allows to prove interior Schauder estimates. 
For more recent results we quote the 
H\"older estimates by Citti \cite{C}, the Schauder estimates of  Xu \cite{Xu} and Capogna
and Han \cite{CapognaHan} for uniformly subelliptic operators,  
Bramanti and Brandolini \cite{BramantiBrandolini}
for heat-type operators and the results of Lunardi \cite{Lunardi}, Di
Francesco and Polidoro \cite{PolidoroDiFrancesco}, Gutierrez and Lanconelli \cite{GutierrezLanconelli},  Bramanti and Zhu \cite{bramantizhu} and Simon \cite{Simon}
for  a large 
class of operators. The problem at the boundary is completely different and largely unsolved. 

\subsection{Schauder estimates at the boundary}

A surface $M$ in a Carnot group, smooth in the Euclidean sense,
 can be locally expressed as the zero 
level set of a function $f\in C^\infty$, but  it can 
have points in $M$ where its  subriemannian gradient 
vanishes. At these points, called characteristic, 
the geometry of the surface is not completely understood. 
Far from characteristic points,  
properties of regular surfaces have been largely studied starting 
from the papers of Kohn and Nirenberg 
in \cite{KN}, Jerison in \cite{Jerison} and more 
recently by Franchi, Serapioni and Serra Cassano, \cite{FSSC, FSSC1} 
(see also the references therein). 
The stratification defined in (\ref{strat})
induces a stratification on the tangent plane of the manifold $M$. We will call ${\hat V}^1 = V^1 \cap TM$, 
${\hat V}^2 = V^2 \cap TM$, $\cdots$,  ${\hat V}^\kappa = V^\kappa \cap TM$. It is not restrictive to assume 
that $X_1\in V^1$ is normal to ${\hat V}^1$ with respect to the metric fixed in $V^1$ so that we can denote by
$\{\hat X_i\}_{i=2,\cdots, m}$ a basis of ${\hat V}^1$. 
We also require that the following condition holds:
\begin{equation}\label{assumption}Lie ({\hat V}^1) = TM.\end{equation}
Under this assumption the manifold $M$ has a H\"ormander structure, 
and $\hat V^1$ inherits a metric from the immersion in $V^1$. Hence  
a distance $\hat d$ and corresponding classes of H\"older continuous functions ${\hat C}^{k, \alpha}(M)$ are well defined. For every choice of regular coefficients $(b_i)_{i=2,\cdots, m}$, 
a Laplace-type operator 
\begin{equation}\label{laplacefundamental}\hat \Delta = 
\sum_{i=2}^m \hat X_{i}^2 + \sum_{i=2}^m b_i \hat X_i
\end{equation} 
is  defined on $M$, 
with fundamental solution 
$\hat \Gamma_{\hat\Delta}.$

It has been proved by Kohn and Nirenberg in \cite{KN} that, 
if $D$ is a smooth open set with smooth boundary and $g$ a smooth function defined on the boundary of $D,$ the problem
\begin{equation}\label{Poisson_problem}\Delta u=0  \ \text{in } D, \quad u=g \ \text{on }\partial D\end{equation}
has a unique solution, of class $C^{\infty}$ up to the boundary at non characteristic points. At the characteristic points very few results are known (see \cite{Jerison2}, already quoted, and \cite{CG98}, \cite{GV2000} and \cite{V}, where existence of non tangential limits up to the boundary are established).
In this paper we prove the exact analogous of the classical Schauder estimates 
at the boundary, providing estimates of the ${\hat C}^{2, \alpha}$ norm of the 
solution in terms of the H\"older norm of the data.  
Precisely our result can be stated as follows.

\begin{theorem} \label{c:schauderGroups} 
Let $D \subset \mathbb{G}$ be a smooth, bounded domain and 
assume that the vector fields 
$\{X_i\}_{i=1, \cdots, m}$ satisfy the assumption \eqref{assumption}. 
Denote $u$ the unique
solution to
$$\Delta u=f\; \text{in}\  D, \quad u= g \text{ on }\,  \partial D, $$
where $f \in C^\alpha(\bar D)$ and $g \in \hat C^{2, \alpha} (\partial D)$ and $\alpha>0$. 
If $\bar x\in \partial D$ and $V$ is an open neighborhood of $\bar x$ without characteristic
 points, for every $\phi\in C^\infty_0(V)$ we have 
$\phi u \in C^{2, \alpha}(\bar D\cap V)$ and 
\begin{equation}
	\label{stime}
\|\phi u\|_{C^{2, \alpha}(\bar D\cap V)} \leq C (\|g\|_{\hat C^{2, \alpha} (\partial D)} + \|f\|_{C^\alpha(\bar D)}).\end{equation}
\end{theorem}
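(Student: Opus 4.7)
The plan is to represent the solution by a Poisson kernel built from the fundamental solution $\Gamma_\Delta$, and to read \eqref{stime} off this kernel. A preliminary reduction removes the inhomogeneity: extending $f$ to a compactly supported H\"older function $\tilde f$ on $\bbG$ and subtracting the Newtonian potential $v=\Gamma_\Delta \ast \tilde f$, whose $C^{2,\alpha}$-bound follows from the type-$2$ estimate \eqref{e:sileva} and the interior Schauder theory of Folland--Stein and Rothschild--Stein recalled in the introduction, reduces matters to the homogeneous Dirichlet problem $\Delta w=0$ in $D$, $w=\tilde g:=g-v|_{\partial D}\in \hat C^{2,\alpha}(\partial D)$. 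Since \eqref{stime} is local and $\phi\in C^\infty_0(V)$, it is enough to control $\phi w$ in $V\cap \bar D$ near the non-characteristic point $\bar x$.

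\textbf{Construction of the Poisson kernel.} Near $\bar x$ the non-characteristic assumption allows us to choose a basis in which $X_1$ is a horizontal transversal to $\partial D$ and $\hat X_2,\dots,\hat X_m$ span its horizontal tangent space; combined with \eqref{assumption}, this equips $\partial D\cap V$ with its own H\"ormander structure, the tangential sub-Laplacian being $\hat \Delta$ of \eqref{laplacefundamental}. I would then take the horizontal normal derivative of $\Gamma_\Delta$ as a first-approximation Poisson kernel,
\[
P_0(x,y):=X_1^{(y)}\Gamma_\Delta(x,y),\qquad x\in D\cap V,\ y\in \partial D\cap V,
\]
and form the layer potential $W[\psi](x)=\int_{\partial D}P_0(x,y)\psi(y)\,d\sigma(y)$. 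Because $\Gamma_\Delta$ is $\Delta$-harmonic in $x$ off the diagonal, $W[\psi]$ solves $\Delta W[\psi]=0$ in $D\cap V$; a jump-relation computation adapted to the type-$\lambda$ calculus \eqref{e:sileva} should yield
\[
\lim_{x\to y_0\in \partial D\cap V}W[\psi](x)=-\tfrac12\psi(y_0)+T\psi(y_0),
\]
with $T$ a boundary integral operator whose kernel, thanks to the transversality of $X_1$ and \eqref{e:sileva}, is weakly singular on $\partial D$. One then inverts $-\tfrac12 I+T$ on $\hat C^{2,\alpha}(\partial D)$ by a Fredholm/Neumann-series argument, injectivity being guaranteed by the uniqueness of the Dirichlet problem of Kohn--Nirenberg, and sets $w=W\bigl[(-\tfrac12 I+T)^{-1}\tilde g\bigr]$.

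\textbf{Schauder bound and main obstacle.} Once $w$ is written as a layer potential of a density in $\hat C^{2,\alpha}(\partial D)$, estimate \eqref{stime} is obtained by differentiating twice in horizontal directions under the integral: tangential derivatives $\hat X_i$ are transferred onto the density and absorbed by its $\hat C^{2,\alpha}$-norm, while the remaining singular contributions from $X_1^2$ and from the residual horizontal directions are handled by the type-$\lambda$ singular-integral calculus of Folland--Stein, Rothschild--Stein, Xu and Capogna--Han recalled in the introduction. The principal difficulty of the whole programme, and the novelty highlighted in the abstract, lies in the middle step: the precise derivation of the jump relation and the invertibility of $-\tfrac12 I+T$ on the intrinsic space $\hat C^{2,\alpha}(\partial D)$. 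With no Fourier transform available on $\partial D$, every mapping property of $T$ must be extracted directly from \eqref{e:sileva} and from the H\"ormander structure that \eqref{assumption} induces on $\partial D$; once this is done, Theorem~\ref{c:schauderGroups} follows.
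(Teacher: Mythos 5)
Your reduction to the homogeneous problem by subtracting a Newtonian potential matches the paper's strategy, but from there you diverge in a way that leaves a genuine gap. You propose the classical double-layer approach: take $P_0(x,y)=X_1^{(y)}\Gamma_\Delta(x,y)$, form $W[\psi]$, derive a jump relation $W[\psi]\to -\tfrac12\psi+T\psi$, and invert $-\tfrac12 I+T$ by Fredholm or a Neumann series. The two load-bearing steps here — the jump relation with the constant coefficient $-\tfrac12$, and the invertibility of $-\tfrac12 I+T$ on $\hat C^{2,\alpha}(\partial D)$ — are asserted, not proved, and neither is available in this setting. The $-\tfrac12$ in the Euclidean jump relation is a consequence of the radial symmetry of $\Gamma$ near the pole; for a Carnot sub-Laplacian the fundamental solution has strongly anisotropic, non-radial level sets, and there is no reason the boundary limit of the double-layer potential is $-\tfrac12\psi$ plus a genuinely weakly singular remainder rather than a pointwise-variable multiple of $\psi$ plus a strongly singular operator. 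Similarly, a Neumann series requires $\|T\|<\tfrac12$ (false in general) and a Fredholm argument requires compactness of $T$ on $\hat C^{2,\alpha}$ of the local boundary piece, which you do not have for free and which would anyway leave open the injectivity of the \emph{local} operator $-\tfrac12 I+T$ (Kohn--Nirenberg uniqueness is global). You explicitly flag that "every mapping property of $T$ must be extracted directly from \eqref{e:sileva}," which correctly identifies the hard part, but identifying it is not the same as doing it.

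The paper's actual mechanism is different and is precisely what replaces the missing jump relation. It works with the \emph{single}-layer operator $E_{\Gamma_\Delta(0,\cdot)}$ with kernel $\Gamma_\Delta((0,\hat x),(0,\hat z))$ restricted to the boundary, and the central new input is Theorem \ref{teorema1}: on the boundary, $E_{\Gamma_\Delta(0,\cdot)}^2 = E_{\hat\Gamma_{\hat\Delta}} + (\text{kernel of type } 5/2)$, i.e.\ the restricted fundamental solution is an approximate square root of the fundamental solution of the tangential sub-Laplacian $\hat\Delta$. Since $\hat\Delta$ inverts $E_{\hat\Gamma_{\hat\Delta}}$ exactly, the operator $K_1= E_{\Gamma_\Delta(0,\cdot)}\hat\Delta$ is an approximate right inverse of $E_{\Gamma_\Delta(0,\cdot)}$, and a parametrix series $K_1+R$ (with each term gaining type $1/2$) gives the exact inverse. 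The Poisson operator is then $P(g)=E_{\Gamma_\Delta}\bigl((K_1+R)g\bigr)$, manifestly $\Delta$-harmonic in $D$, with boundary trace $g$ by construction. This yields boundedness $\|P(g)\|_{C^{2,\alpha}}\lesssim\|g\|_{\hat C^{2,\alpha}}$ from the singular-integral calculus, and the Schauder estimate \eqref{stime} follows after a localization/bootstrap (cut-offs $\phi,\phi_1$, Moser's $C^\beta$ estimate, then upgrading to $C^{1,\alpha}$ and $C^{2,\alpha}$). Proving Theorem \ref{teorema1} itself — via a Riemannian $\varepsilon$-approximation, a parabolic lift, and a non-homogeneous parametrix whose constants are shown to be uniform in $\varepsilon$ — is where the real work is, and it has no counterpart in your outline. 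So while your overall architecture (potential-theoretic representation plus singular integrals) is in the right spirit, the specific route you sketch would need an essentially new proof of the jump relation and of the invertibility of $T$, neither of which you supply and neither of which is known in general Carnot groups.
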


As we mentioned before, up to now subriemannian boundary
Schauder estimates are known only for the Heisenberg group (see \cite{Jerison}) and are based on the construction of a Poisson kernel. 
If $D$ is an open bounded set, and 
 $V$ is a neighborhood of a non characteristic point $\overline x\in\partial D$, 
we say that $P:C^{\infty}(\partial D\cap V)\rightarrow C^{\infty}(V\cap \overline{D})$ is a local Poisson operator 
for the   problem \eqref{Poisson_problem}
 if,  for every $g\in C^{\infty}(\partial D\cap V)$, the function $u=P(g)$
satisfies 
$\Delta u=0$ in $D\cap V$ and  $u(x)=g(x)$ for all $x\in \partial D\cap V$. 

The construction of the Poisson kernel contained  in \cite{Jerison} is based on 
the standard Fourier transform and can not be 
 directly repeated in general Lie groups. 
General measure theory ensures the existence of a Poisson kernel under very weak assumptions on the vector fields  
(see for example Lanconelli and Uguzzoni  \cite{LAU}), but  this 
theory only allows to establish $L^p$ regularity of the solution at the boundary. 
A Poisson kernel has been built by 
 Ferrari and Franchi  \cite{Ferrarifranchi} in the special case  when the set $D$ is a half space of the form 
 $ \R^+\times\hat \Gi $. In this case, as well as in the Heisenberg groups considered in 
\cite{Jerison}, assumption \eqref{assumption} is satisfied. 
However, this assumption is verified by a 
much larger class of Carnot groups, where the splitting  of $\Gi$ is not possible. A simple example can be the following one: 
\begin{equation}\label{campesempio}X_1 = \partial_1, \quad X_2 = \partial_2 + x_1^2 \partial_5 + x_3 \partial_4, 
\quad X_3 = \partial_3 + x_4 \partial_5, 
\end{equation}
with $M=\{x_1=0\}$. 

Our construction of the 
Poisson operator is based on the knowledge of a smooth fundamental solution, its restriction to the boundary, and on the properties of singular integrals. Since our result is local, we can locally express the 
boundary of $D$ as the graph of a smooth function $w$, and 
perform a change of variable, to reduce the boundary to a plane. 
In the new coordinates the vector fields will explicitly depend on the 
function $w$ defining the boundary, and will not be homogeneous in general.
For sub-Laplacian type operators associated to these vector fields we will obtain the following expression of the 
Poisson kernel. 
\begin{theorem}
 \label{mainRn}
Let $D= \{(x_1,\x)\in \R \times \R^{n-1}: x_1>0\}\subset \Gi$ 
be  a non
characteristic half space and let $g\in C^\infty(\partial D)$. Let $\bar x\in \partial D$, 
let $V_0$ be a neighborhood of $\bar x$  in $\R^{n}$ and let 
\begin{equation}K_1(g)(\hat y) :=\int_{\partial D \cap V_0} \Gamma_{\Delta}((0, \hat y), (0,\hat z)) \hat \Delta g(\hat z) 
d\hat z.\end{equation} There exists  
a lower order operator $R$ of type  3/2 with respect to the distance $\hat d$ defined on $\partial D$, such that 
for every  neighborhood $V$ of $\bar x$  in $\R^{n}$, $V\subset\subset V_0$, 
 the operator 
\begin{equation}\label{poissonintro}P(g)(x) := \int_{\partial D \cap V_0} \Gamma_{\Delta}(x, (0,\hat y)) (K_1 + R)(g)(\hat y) d\hat y \end{equation}
 is a Poisson kernel in $V$. 
\end{theorem}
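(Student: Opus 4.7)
The construction has three steps: $\Delta$-harmonicity of $P(g)$ in $V\cap D$, identification of its boundary trace as an operator on $\partial D$, and the choice of $R$ so that this trace equals $g$. For $x\in V\cap D$ and $(0,\hat y)\in\partial D\cap V_0$ one has $x\neq(0,\hat y)$, so $\Delta_x\Gamma_\Delta(x,(0,\hat y))=0$, and a dimensional count using \eqref{e:sileva} together with the weight $1$ of the coordinate normal to the non-characteristic surface $\partial D$ shows that $\Gamma_\Delta((0,\hat y),\cdot)$ is locally integrable on $\partial D$; hence differentiation under the integral gives $\Delta P(g)=0$ in $V\cap D$.

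The boundary trace is encoded by the single-layer-type operator
\[
T h(\hat z) := \int_{\partial D \cap V_0}\Gamma_\Delta((0,\hat z),(0,\hat y))h(\hat y)\,d\hat y,
\]
which is continuous across $\partial D$ by the weak singularity of $\Gamma_\Delta$. Thus $P(g)$ extends continuously to $\bar D\cap V$ with trace $T((K_1+R)g)$, and the Poisson condition reduces to
\[
T\circ K_1 + T\circ R = I\quad\text{on }\partial D\cap V.
\]

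The heart of the argument is the analysis of $T\circ K_1 = T^2\circ\hat\Delta$. On $(\partial D,\hat d)$ the kernel of $T$ behaves like $\hat d^{\,1-\hat Q}$ with $\hat Q = Q-1$ at non-characteristic points, so $T$ is of type $1$ on $\partial D$ and $T^2$ of type $2$, matching the regularity of the intrinsic fundamental solution $\hat\Gamma_{\hat\Delta}$ of $\hat\Delta$. The crucial technical step is the comparison
\[
T^2 = \hat\Gamma_{\hat\Delta} + \mathcal N\qquad\text{on } \partial D\cap V,
\]
with $\mathcal N$ of type strictly greater than $2$. This should follow from a first-order Taylor expansion of $\Gamma_\Delta$ in the transversal $X_1$-direction together with the fact that only the $X_1^2$ term distinguishes $\Delta$ from $\hat\Delta$ restricted to $\partial D$; smooth errors arising from the cut-off $V\subset\subset V_0$ can be absorbed into $\mathcal N$. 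Composing with $\hat\Delta$ gives $T\circ K_1 = I + \mathcal E$ with $\mathcal E=\mathcal N\circ\hat\Delta$ an operator of positive type on $\partial D$.

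It remains to solve $T\circ R = -\mathcal E$. Since $K_1$ is an approximate right inverse for $T$, the ansatz $R_0=-K_1\mathcal E$ produces $T R_0 = -\mathcal E-\mathcal E^2$, and iterating formally yields the Neumann series $R = -K_1\mathcal E(I-\mathcal E)^{-1}$. Each factor of $\mathcal E$ gains a fixed amount in type, so the leading term $-K_1\mathcal E$ is of type $3/2$ and the remainder is strictly more regular. The main difficulties I anticipate are twofold: (i) proving the comparison $T^2-\hat\Gamma_{\hat\Delta}\in\{\text{type} > 2\}$ requires sharp horizontal Taylor bounds for $\Gamma_\Delta$ and precise control of $\Delta-\hat\Delta$ in terms of the defining function $w$ of $\partial D$; (ii) summing the Neumann series in a topology compatible with the type-$\lambda$ calculus demands composition lemmas for singular integrals on $(\partial D,\hat d)$, whose H\"ormander structure, inherited from the ambient through the graph parametrization, is in general non-homogeneous, so that the homogeneous calculus of \cite{FollandStein,RS} must be suitably adapted.
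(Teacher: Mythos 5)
Your overall architecture is the same as the paper's: you consider the single-layer operator $T=E_{\Gamma_\Delta((0,\cdot),(0,\cdot))}$ on $\partial D$, reduce the Poisson condition to the identity $T\circ(K_1+R)=I$, note $T\circ K_1=T^2\circ\hat\Delta$, and build $R$ by a Neumann series once you know that $T^2-\hat\Gamma_{\hat\Delta}$ is a kernel of type strictly greater than $2$. The paper's $R=T\,E_\Phi\,\hat\Delta$ with $\Phi=\sum_j (E_{R_1})^j R_1$, $R_1=\hat\Delta_{\x}(\hat\Gamma_{\hat\Delta}-\hat\Gamma_{\Delta^2})$, coincides with your (corrected) $R=-K_1(I+\mathcal E)^{-1}\mathcal E$ via the resolvent identity $\hat\Delta(I+E_{\mathcal N}\hat\Delta)^{-1}=(I+\hat\Delta E_{\mathcal N})^{-1}\hat\Delta$; the only real difference is that the paper applies $\hat\Delta$ in the first variable and you in the second. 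Two points to fix. First, and most importantly: the "crucial technical step" $T^2=\hat\Gamma_{\hat\Delta}+\mathcal N$ with $\mathcal N$ of type $>2$ is exactly Theorem~\ref{teorema1} (with $\mathcal N=-\hat R_{\hat\Delta}$ of type $5/2$). You should invoke that result directly; instead, you propose to re-derive it from "a first-order Taylor expansion of $\Gamma_\Delta$ in the transversal $X_1$-direction together with the fact that only the $X_1^2$ term distinguishes $\Delta$ from $\hat\Delta$". That route does not work: as the paper stresses, neither the subriemannian metric nor the operator $\Delta$ splits into tangential and normal parts, so there is no such Taylor factorization of $\Gamma_\Delta$; obtaining \eqref{tesi1} requires the parabolic regularization, the Riemannian $\e$-approximation with its $\e$-homogeneity, and the non-homogeneous parametrix of Sections~3--4. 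This is the entire technical content of the paper, not a step one can dispatch with a Taylor estimate. Second, a minor slip: since $TK_1=I+\mathcal E$, the Neumann series gives $R=-K_1(I+\mathcal E)^{-1}\mathcal E=-K_1\sum_{j\ge 0}(-\mathcal E)^{j}\mathcal E$, i.e.\ alternating signs, not $-K_1\mathcal E(I-\mathcal E)^{-1}$.
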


The representation \eqref{poissonintro}
and the properties of the fundamental solution 
immediately ensure that $P(g)$ satisfies the equation in \eqref{Poisson_problem}.
In order to show that $P$ is a Poisson operator, we only have to 
show that  $P(g) = g$  on the boundary $\{x_1=0\}$. 
Denoting by
$E_{\Gamma_{\Delta}(0, \cdot)}$ the operator associated to the 
kernel $\Gamma_{\Delta}((0,\x), (0,\z))$, 
this is equivalent to say that 
$K_1 + R$ is the inverse of the operator $E_{\Gamma_{\Delta}(0, \cdot)}$.
Under the assumption \eqref{assumption} this is proved using 
the fundamental solution $\hat \Gamma_{\hat\Delta}$ of the operator $\hat \Delta$ defined in
\eqref{laplacefundamental}. Indeed  $\hat \Gamma_{\hat\Delta}$ 
satisfies the following approximate reproducing formula:
\begin{theorem}\label{teorema1} Let $D= \{(x_1,\x)\in \R \times \R^{n-1}: x_1>0\}\subset \Gi$ be a non
characteristic half plane. 
If $\bar x\in \partial D $, then there exists a neighborhood $V$ of $\bar x$ in $\Gi$  such that
the fundamental solution admits the following representation: 
\begin{equation}\label{tesi1}\hat {\Gamma}_{\hat\Delta}(\x, \y)  =\int_{\partial D \cap V}  
\Gamma_{\Delta}((0,\x), (0,\z))
\Gamma_{\Delta}((0,\z), (0,\y)) d\z + \hat R_{\hat\Delta}(\x, \y), \end{equation}
for every $x = (0, \x), y= (0, \y)\in \partial D \cap V$, 
where $\hat R_{\hat\Delta}$ is a kernel of type  $5/2$ with respect to the distance $\hat d$.
\end{theorem}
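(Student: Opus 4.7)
The plan is to reduce \eqref{tesi1} to a distributional identity for $\hat\Delta_{\hat x}\tilde K$, where
$$\tilde K(\hat x,\hat y) := \int_{\partial D\cap V}\Gamma_\Delta((0,\hat x),(0,\hat z))\,\Gamma_\Delta((0,\hat z),(0,\hat y))\,d\hat z,$$
and then invert $\hat\Delta$ on the error. Since $\bar x$ is non-characteristic, work in local coordinates so that $\partial D=\{x_1=0\}$ and $X_1$ is the transversal horizontal field; the tangential horizontal fields $\hat X_2,\ldots,\hat X_m$ are the restrictions to $\partial D$ of $X_2,\ldots,X_m$. Correspondingly $\Delta$ splits as $\Delta=\hat\Delta_{\mathrm{hor}}+(X_1^2+b_1X_1)$, with $\hat\Delta_{\mathrm{hor}}:=\sum_{i\ge 2}(X_i^2+b_iX_i)$, and for any smooth $F$ defined near $\partial D$ one has $\hat\Delta(F|_{\partial D})=(\hat\Delta_{\mathrm{hor}}F)|_{\partial D}$. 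Since $\Delta_x\Gamma_\Delta(x,(0,\hat z))=0$ off the pole, this yields the pointwise identity
$$\hat\Delta_{\hat x}\,\Gamma_\Delta((0,\hat x),(0,\hat z))=-(X_1^2+b_1X_1)_x\,\Gamma_\Delta(x,(0,\hat z))\big|_{x=(0,\hat x)}\qquad(\hat x\neq\hat z),$$
whose right-hand side, by \eqref{e:sileva}, is of type $0$ w.r.t.\ $d$ in $\mathbb{G}$.

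The heart of the proof is the distributional computation of $\hat\Delta_{\hat x}\tilde K$. Passing $\hat\Delta_{\hat x}$ inside the integral in principal-value sense and using the identity above, the integrand becomes the product of a kernel which is (roughly) of type $-1$ w.r.t.\ $\hat d$ on $\partial D$ and a kernel of type $1$ w.r.t.\ $\hat d$. After regularization and a careful extraction of the boundary contribution at $\hat z\to\hat x$, this composition decomposes as
$$\hat\Delta_{\hat x}\tilde K(\hat x,\hat y)=\delta_{\hat y}(\hat x)+S(\hat x,\hat y),$$
where the $\delta$ comes from the homogeneous osculating Carnot structure at $\bar x$ — the subriemannian analog of the exact Euclidean Riesz composition $|\hat x|^{2-n}\ast|\hat x|^{2-n}=c|\hat x|^{3-n}$ — and $S$ collects the lower-order contributions produced by the non-triviality of the commutators and by the first-order drift terms $b_i$. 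Using the kernel estimates \eqref{e:sileva} for iterated horizontal derivatives of $\Gamma_\Delta$, the local equivalence of $d$ and $\hat d$ at non-characteristic points, and a Taylor expansion of $\Gamma_\Delta$ around the diagonal in exponential coordinates, one checks that $S$ is a kernel of type \emph{exactly} $1/2$ w.r.t.\ $\hat d$.

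Setting $\hat R_{\hat\Delta}:=\hat\Gamma_{\hat\Delta}-\tilde K$ and recalling $\hat\Delta_{\hat x}\hat\Gamma_{\hat\Delta}(\cdot,\hat y)=\delta_{\hat y}$ on $\partial D\cap V$, one obtains $\hat\Delta_{\hat x}\hat R_{\hat\Delta}=-S$, and hence
$$\hat R_{\hat\Delta}(\hat x,\hat y)=-\int_{\partial D\cap V}\hat\Gamma_{\hat\Delta}(\hat x,\hat w)\,S(\hat w,\hat y)\,d\hat w$$
up to a smooth cutoff correction negligible for the type estimate. The standard composition rule for subriemannian kernels of type with respect to $\hat d$ now turns type $2$ composed with type $1/2$ into type $5/2$, giving \eqref{tesi1}. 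The principal obstacle is the middle step: identifying the $\delta$-contribution as the Riesz-composition relic of the osculating Carnot group, and showing that the remainder $S$ improves the singularity by exactly a half-derivative — rather than by an arbitrarily small $\varepsilon$ — in the scale of kernels of type w.r.t.\ $\hat d$. This gain of $1/2$ is specific to the non-characteristic transverse splitting and must be tracked uniformly through the commutator and drift corrections, but it is what ultimately produces the error of type $5/2$ and, downstream, justifies the Poisson representation \eqref{poissonintro}.
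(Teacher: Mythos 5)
Your reduction of the problem to the distributional identity $\hat\Delta_{\hat x}\tilde K=\delta_{\hat y}+S$ is a genuinely different strategy from the paper's: the paper never applies $\hat\Delta$ to $\tilde K$ directly. Instead it goes through a parabolic and Riemannian regularization. Concretely, it replaces $\Delta$ by $\Delta_\varepsilon=\Delta+\varepsilon^2\sum_{i>m}X_i^2$ and works with the heat operators $L_\varepsilon=\partial_t-\Delta_\varepsilon$; it then freezes the coefficients on each plane $\{x_1=z_1\}$, obtaining an operator $L_{z_1,\varepsilon}$ whose heat kernel factors \emph{exactly} as a one-dimensional Gaussian in $x_1$ times the tangential kernel (Remark~\ref{r:splittingNC}); a parametrix argument, run uniformly in $\varepsilon$, gives $|\Gamma_\varepsilon-\Gamma_{z_1,\varepsilon}|\le C(t-\tau)^{1/4}\Gamma_{z_1,\varepsilon}$ (Proposition~\ref{lemmaJe2}); and the theorem follows by sending $\varepsilon\to0$ and integrating twice in time. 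Two features of your approach would need to be supplied to make it match this in rigor.

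First, and most importantly, the heart of your argument — that $\hat\Delta_{\hat x}\tilde K(\hat x,\hat y)$ equals $\delta_{\hat y}(\hat x)$ (with coefficient \emph{exactly} one) plus a kernel $S$ of type $1/2$ — is precisely the analytic content of the theorem, and you assert it rather than prove it. A priori, applying the degree-$2$ operator $\hat\Delta$ to $\tilde K$, which is a convolution of two kernels of type $1$ w.r.t.\ $\hat d$ (hence of type $2$), yields a type-$0$ object, i.e.\ a $\delta$ plus a generic Calder\'on--Zygmund kernel; nothing in the general singular integral calculus guarantees a gain of $1/2$. You recognize this yourself in your last sentence. The paper earns the $1/2$ gain from a very specific mechanism: the freezing error $X_{i,\varepsilon}-X_{i,z_1,\varepsilon}$ has $\varepsilon$-degree $1/2$ in an $\varepsilon$-thin slab around the plane (Corollary~\ref{7-1}), so $H_\varepsilon=L_\varepsilon-L_{z_1,\varepsilon}$ has $\varepsilon$-degree $3/2<2$ and the parametrix series converges with a definite gain. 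At the purely subriemannian level this fails: as the paper points out in the example after Theorem~\ref{lemmaJe1}, $\hat\Delta-\Delta$ (equivalently $H$) can have degree exactly $2$, so the elliptic parametrix has no convergence margin. Your plan to extract the gain by "Taylor expansion of $\Gamma_\Delta$ around the diagonal" would have to circumvent exactly this obstruction, and it is not clear how.

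Second, the normalization of the $\delta$. In the paper the constant is pinned down because the perpendicular factor $\Gamma_{\perp,z_1,\varepsilon}$ is a genuine $1$D Gaussian, and $\sqrt{4\pi(t-\tau)}\,\Gamma_{\perp,z_1,\varepsilon}((0,\hat x,t),(0,\hat z,\tau))\equiv1$; this explicit factorization is only available in the Riemannian regularization. In your picture, the $\delta$ arises as "the subriemannian analog of the Riesz composition $|\hat x|^{2-n}\ast|\hat x|^{2-n}=c|\hat x|^{3-n}$," and you would have to show that the constant $c$ produced by the Riesz-type composition on the osculating Carnot group, multiplied by the normalization of $\Gamma_\Delta$, gives exactly the normalization of $\hat\Gamma_{\hat\Delta}$. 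That is a real (even if plausible) identity, and it is left unverified. Until both the coefficient of the $\delta$ and the $1/2$ gain in $S$ are actually established, the proposal is an outline rather than a proof.
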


This theorem ensures
 that  $K_1$ is the inverse of  the operator $E_{\Gamma_{\Delta}(0, \cdot)}$ up 
to a remainder. The proof of Theorem \ref{mainRn} will be concluded 
with a standard version of the parametrix method, which allows to 
carefully handle the remainder and to prove that $K_1 +R$ is indeed the inverse of 
$E_{\Gamma_{\Delta}(0, \cdot)}$. 

Theorem \ref{teorema1} expresses 
$E_{\Gamma_{\Delta}(0, \cdot)}$ as 
the square root of 
the operator associated to 
$\hat {\Gamma}_{\hat\Delta}$. This result, well known in the 
Euclidean setting (see for example the results of 
Caffarelli and Silvestre \cite{CS}), was not known 
for general Carnot groups, but only in the special case when 
the group $\Gi$ is expressed as $\Gi= \R \times \hat \Gi$ (see Ferrari and Franchi in \cite{Ferrarifranchi}). 
The proof in this setting is inspired by the results of Evans in \cite{E} (in the Euclidean setting) and 
 of Capogna, Citti and Senni (in Carnot groups) in \cite{CCS}.

\subsection{Structure of the paper and sketch of the proofs} 
The paper starts with Section 2, where we fix notations and recall known properties of Carnot groups and their Riemannian approximation. 

In Section 3  we show that a 
non characteristic plane can always be represented as the plane  $\{(x_1,\x)\in \mathbb{R}\times \mathbb{R}^{n-1}\,:\, x_1=0\}$  
with the canonical exponential change of variables described in 
\eqref{deftheta}. In these coordinates the vector fields attain an explicit 
polynomial representation recalled in 
\eqref{struttura campi}.  Moreover, Section 3  contains the proof of 
Theorem \ref{teorema1} under the assumption that the boundary of $D$ 
is a non characteristic plane and the vector fields are homogeneous. 
%Using the results of Section 2, the plane will be expressed as $\{x_1 =0\}$. 
The proof of this theorem  is the most technical part of the paper and it is based on 
a Riemannian approximation and a parabolic regularization of the operator $\Delta$. 
Precisely the Riemannian approximation of the Laplace type operator $\Delta$ is an operator of the form
\begin{equation}\label{Definede}\Delta_{\e}= 
\Delta+ \e^2 \sum_{i=m+1}^{n}X_i^2,\end{equation}
and its parabolic regularization leads to the operator
\begin{equation}\label{e:L_epsilon}
 L_\e:= \partial_t - \Delta_{\e}.\end{equation}
In a neighborhood of any non characteristic point $z$ of the plane $\partial D$ 
we will apply a new version of the freezing and parametrix methods to approximate  
the fundamental solution $\Gamma_{\e}$ of  $L_\e$ in terms of the fundamental solution $\hat \Gamma_{\e}$ of a 
suitable tangential heat operator $\partial_t - \hat \Delta_{\e}$.  
The parametrix method has already been largely used in the subriemannian setting 
for estimating the fundamental solution 
in terms of a known one (see for example  \cite{RS, SC, JSC, C, blu_a}). 
Here we are inspired by the papers \cite{CCS} and \cite{CM} where the relation between the fundamental solution on the whole space and its restriction to the boundary was studied in the framework of a diffusion driven motion by curvature. 
The main technical difficulty in our setting is due to the fact that neither the geometry of the subriemannian space nor the structure of the subriemannian operators is naturally represented as the direct sum of the tangential 
and the normal part.  This splitting is true in the Riemannian approximation, and this is the reason for using this approximation. 
However the subriemannian structure and its Riemannian approximation have different homogeneous dimension.
Hence we need to introduce a non homogeneous version of the parametrix method, which leads to the existence of a constant $C$ such that  
 \begin{equation}
	 \label{quellosopra}
\left| \Gn_\e((0, \x, t), (0, \y,\tau)) - 
 \frac{\hat \Gamma_{\e}((\x,t), (\y, \tau))}{
 \sqrt{t-\tau}} \right|\leq 
C\Gn_{\e}((0, \x, t), (0, \y, \tau))(t-\tau)^{1/4}
\end{equation}
 for every $\x$ and $\y\in \partial D$, (this is done in Proposition \ref{lemmaJe2} below). 
The key point here is to prove that $C$ is independent of $\e$. 
The proof  is quite delicate, and it is based on an interplay between the Riemannian and subriemannian 
nature of our operators.  
Since all 
constants in \eqref{quellosopra} are independent of $\e$, we can let $\e$ go to $0$ and obtain 
an analogous estimate for subriemannian operators. 
Denoting $\Gamma$ the fundamental solution of $\partial_t - \Delta$ and 
$\hat \Gamma$ the fundamental solution of the operator $\partial_t - \hat \Delta$, 
we will prove in 
Theorem \ref{lemmaJe1} that 
there  exists  a constant $C=C(T)$  such that 
 for 
all  $z=(0, \z)$, $x=(0, \x)$ in $\partial D$ and for every $t$ and $\tau$, with $0<t-\tau<T$, we have 
\begin{equation}
	 \label{altraggiunta}
\left| \Gn((0, \x, t), (0, \y,\tau)) - 
 \frac{\hat \Gamma((\x,t), (\y, \tau))}{
 \sqrt{t-\tau}} \right|\leq 
C\Gn((0, \x, t), (0, \y, \tau))(t-\tau)^{1/4}
\end{equation}

Now, integrating in the time variable, we deduce the proof of Theorem \ref{teorema1} for homogeneous 
vector fields and on a plane (also called Lemma \ref{l:Gammaconvolve}). 

In Section 4 we provide the full proof of Theorem \ref{teorema1} on smooth manifolds. Since 
this is a local result, we show that, via a suitable change of variables, it is possible to identify the boundary of $D$ with the plane $\{x_1 =0\}$.
 With this change of variables the vector fields  $(X_{i})$ become non homogeneous, but they still define an H\"ormander structure. In Section 4.1  we describe this procedure and recall some properties of subriemannian spaces in this generality. 
Then, in Section 4.2 we apply a new simplified version of the parametrix method 
of Rothschild and Stein \cite{RS} tailored on the present setting, 
and locally we reduce the vector fields to homogeneous ones. 
With this instrument we can deduce the proof of Theorem \ref{teorema1} for 
smooth surfaces from the  one on planes, previously proved in Section 3. 

Finally, Section 5 contains  the construction of the Poisson kernel, which allows to prove  Theorem \ref{mainRn}. 
The main idea of the proof of this theorem has been outlined above. First, we use 
Theorem \ref{teorema1} to build an approximated kernel. 
After that, a standard version of the parametrix method is applied to 
obtain the Poisson kernel from the approximating one. 
The Schauder estimates stated in 
Theorem \ref{c:schauderGroups} are a consequence of the boundedness of the 
operator associated to the Poisson kernel and they will be proved with the same instruments as in \cite{Jerison}. 

\section{Notations and known results}\label{section2}

\subsection{The subriemannian structure}
As recalled in Section \ref{introcarnot}, a Carnot group $\Gi$ is 
$\R^{n}$, with the group low induced by the exponential map and the stratification 
$ V^1\oplus \cdots \oplus V^\kappa$ of the tangent space 
recalled in 
\eqref{strat}. 
The stratification induces a natural notion of 
degree of a vector field:  
\begin{equation}\label{degree}deg(X)=j \quad\text {whenever}\; X\in V^j.\end{equation}
If $\{X_i\}_{i=1, \cdots, n}$ is the stratified basis introduced 
in subsection \ref{introcarnot}, we will write also $deg(i)$ instead of $deg(X_i)$. 
Via the exponential map, 
$\R^{n}$ is endowed with a Lie  group structure  
and the resulting group is denoted by $\mathbb{G}$. Since in this setting the exponential 
map around a fixed point $y$ is a global diffeomorphism, every  other point 
$x$ can be uniquely represented as 
$\xn = \exp(v_1 X_1)\exp(\sum_{i=2}^n v_{i} X_i)(y). $
Consequently we can define a logarithmic function 
$\Theta_{X_1, \cdots, X_n, y}$  as the inverse of the 
exponential map: 
\medbreak\noindent
\begin{equation}\label{deftheta}
\Theta_{X_1, \cdots, X_n, y}: \Gi \to \mathfrak
g,  \quad \Theta_{X_1, \cdots, X_n, y}(\xn)= (v_1, \cdots, v_n).\end{equation}
We will simply denote $\Theta_y$ instead of $\Theta_{X_1, \cdots, X_n, y}$ when no ambiguity may arise. 
Note that we are using exponential canonical coordinates of second type around a fixed point $y\in \mathbb{G}$, 
which will simplify notations while dealing with a boundary problem. 

In particular, the fixed point $y$, around which we choose the axes, has coordinates $0$ and the vector field $X_1$ is  represented as $X_1=\partial_1$ and
all the others vector fields $(X_i)_{i\geq 2}$ coincide with the partial derivative $\partial_i$ 
at the fixed point $y=0$. In any other point they can be represented in these coordinates as 
\begin{equation}\label{struttura campi}
X_1=\partial_1, \quad X_i=\partial_i+\sum_{deg(j)> deg(i)}a_{ij}(v)\partial_j\quad i=2,\cdots,n,\end{equation}
where $a_{ij}$ are 
homogeneous polynomials 
of degree $deg(j) - deg(i)$ depending only on variables $v_h$, with $deg(h) \le deg(j)-deg(i)$ (see for example \cite{RS} for a detailed proof).
Note that if $deg(i)=\kappa $ then $X_i=\partial_i$.

By construction the vectors $\{X_{i}\}_{i=1, \cdots, m}$ 
and their commutators span
 $\mathfrak g$ at every point, and consequently verify H\"ormander's finite rank condition
(\cite{hormander}). 
%This allows to define a control distance $d_{cc}(\cdot, \cdot)$ - called  
 %Carnot-Carath\'eodory distance - 
 %associated with the distribution $X_{1},\ldots, X_{m}$. In particular 
%it is well known  (see \cite{NSW}) that this distance is locally equivalent to the 
%an explicit gauge distance, expressed in term of the exponential mapping and an homogeneous 
%norm defined on the Lie Algebra, hence we will always use this explicit distance.  
 Due to the stratification of the algebra, a natural 
family of dilation  $(\delta_\lambda)_{\lambda>0}$ acts on points 
$v= \sum_{i=1}^n v_i X_i \in \mathfrak{g} $ as follows:
\begin{equation}\label{dilat}
\delta_\lambda(v):=\lambda^{deg(i)}v_i. 
\end{equation} 
On $V^1$, which is generated by $X_1, \cdots X_m$, we define a Riemannian metric which makes $X_1, \cdots, X_m$ an orthonormal basis. 
The associated norm will be extended to an homogeneous norm to the whole $\mathfrak g$ 
defined as follows:  
\begin{equation}\label{subnorm}\|v\| := \sum_{i=1}^{n}  | v_{i}|^{1/{deg(i)}}.\end{equation}
Via the logarithmic function defined in (\ref{deftheta}) the dilation $\delta_{\lambda}$ 
induces a one-parameter group of 
automorphisms on $\Gi$, again denoted by $\delta_{\lambda}$. 
A function $f: \Gi \rightarrow \R$ is called homogeneous of degree $\alpha$ if
$f(\delta_\lambda(x)) = \lambda^\alpha f(x)$  for any $ \lambda>0$ and $x\in \Gi$. 
In particular we can define a 
gauge distance  $d(\cdot,\cdot)$ homogeneous of degree 1, 
as the image of the norm through the function $\Theta$:
\begin{equation}\label{2.5bis}
 d(y, \xn):= \|\Theta_{X_1,\cdots,X_n,y}(\xn)\|.\end{equation} 
The gauge function is  homogeneous of order 
\begin{equation}\label{homodim}
Q:= \sum_{i=1}^\kappa i \, dim(V^i) \end{equation}
with respect to the dilation. 
Hence $Q$ is called the homogeneous dimension of the space and 
there exist constants $C_1, C_2$ such that 
$$
C_1 r^Q \leq |B(x, r)|\leq C_2 r^Q \qquad\forall\, r>0, \ x\in \Gi,$$
where $B(\xn, r)$ denotes  the metric ball centered in $x$ with radius $r$, 
and  $|\,\cdot\,|$ denotes the Lebesgue measure. 

Any vector field $X$ will be identified with 
the first order differential operator with its same coefficients. 
If $\phi$ is a continuous function defined in an open set $V$ of $\Gi$ and if, for every $i=1,\cdots, m,$ there exists the Lie derivative 
$X_{i}\phi$ then we  call horizontal gradient of $\phi$ the vector
\begin{equation}\label{e:tutto subriemannian}
	\nabla \phi=\sum_{i=1}^m (X_{i}\phi) X_{i}.
\end{equation} 
The associated classes of H\"older continuous functions 
will be defined as follows:
\begin{definition}\label{defholder}
Let $0<\alpha < 1$, $V\subset\Gi$ be an open set, and  $u$ be a function defined on
$V.$ We say that $u \in C^{\alpha}(V)$ if there exists a positive constant $M$ such that for
every $x, x_0\in V$ 
\begin{equation}\label{e301}
   |u(x) - u(x_0)| \le M   d ^\alpha(x, x_{0}).
\end{equation}
We put  
 $$\|u\|_{C^{\alpha}(V)}=\sup_{x\neq x_{0}} \frac {|u(x) - u(x_{0})|}{d^\alpha(x, x_{0})}+ 
\sup_{V} |u|.$$
 Iterating this definition, 
 if $k\geq 1$  we say that $u \in
C^{k,\alpha}(V)$  if 
 $X_iu \in C^{k-1,\alpha}(V)$ for all $i=1,\cdots, m$.
\end{definition}
The Laplace type operator defined in \eqref{laplacoperator}
is a differential operator of degree $2$, in the sense of the following definition.
\begin{definition} Let $\{X_{i_j}\}$ be differential operators of order 1 and degree  $ deg(X_{i_j})$. 
We say that the  differential  operator $Y_1= X_{i_1}\cdots  X_{i_p}$ has order $p$ and degree 
$\sum_{j=1}^p deg(X_{i_j})$. 
Moreover, if 
 $Y$ is a differential operator  represented as 
\begin{equation}\label{order} 
Y=a Y_1,\end{equation}
where $a$ is a homogeneous function of degree $\alpha$, then 
we say that $Y$ 
is homogeneous of degree $deg(Y_1)-\alpha$. A differential operator will be called of degree  $k-\alpha$ if it is a sum of operators with maximum degree  
$k-\alpha$.
\end{definition}
% 
% {\blue{We remind now the notion of {\it kernel of order $\alpha$}.
% Following \cite{folland}, a kernel of order $\alpha$ is a 
% homogeneous distribution of degree $\alpha-Q$
% (with respect to group dilations $\delta_\lambda$, see
% \cite{folland_stein}),
% that is smooth outside of the origin.}}

 Following \cite{Folland} we recall the definition of kernel of type $\alpha$:
\begin{definition}\label{kerneltype}
We say that $K$ is a kernel of type $\alpha$, if $K$ is  of class $C^\infty$  away from $0$ and it is homogeneous of degree $\alpha-Q$.
\end{definition}
In a Carnot group, this implies that $K$ satisfies condition \eqref{e:sileva}.

\subsection{The Riemannian approximation of the structure}
One of the key technical instruments that we will use is a 
Riemannian approximation of the subriemannian structure.

For every $\e>0$, we extend the Riemannian metric defined on $V^1$ 
to a left invariant Riemannian metric
defined on $\mathfrak g$
by requesting that   \begin{equation}\label{e:pippo}(X_{1, \e},\cdots,X_{n, \e}):=
(X_1,\cdots,X_m,\e^{{deg}(m+1)-1}
X_{m+1},\cdots, \e^{{deg}(n)-1} X_n)\end{equation}
 is an orthonormal frame. We say that these vector fields {have $\e$-degree}
 equal to $1$, and we  write    $deg_\e(X_{i, \e})=1$.
Since the Lie algebra generated by these vectors also
contains the commutators of these vector fields, we also consider the vector fields
 \begin{equation}\label{e:pippo1}X_{i, \e}:=
X_{i -n+m} \  \text{and } \ {deg}_{\e}(X_{i, \e}):= {deg}(X_{i-n+m}) \ \text{for all}\ i= n+1,\cdots, 2n-m.\end{equation}

Let $d_{cc}$ and $d_{cc, \e}$	denote the  control distances 
 associated with the vector fields $X_{1},\cdots, X_{m}$ and $X_{1, \e},\cdots,X_{n, \e}$, respectively. It is well known (see for instance \cite{Gromov} and the references therein)  that $(\mathbb{G},d_{cc, \e})$  converges
in the Gromov-Hausdorff sense, as $\e\to 0$, to the subriemannian
space $(\mathbb{G},d_{cc})$. Although the subriemannian structure is formally recovered 
in the limit for $\e\rightarrow 0$,  we will need to recognize that 
the structure and all constants appearing in the estimates are stable in the limit. 
In addition we will need to recognize that the space has a property of  $\e$-homogeneity, 
with respect to the natural distance. 

A classical estimate of the distance $d_{cc, \e}$ is due to Nagel, Stein and Wainger in 
 \cite{NSW}.  From the whole family $\{X_{i, \e}\}_{i=1,\cdots, 2n-m}$ it is possible to select different bases  $\{X_{i_j, \e}\}_{i_j\in I}$, for different choices of indices $I = (i_1, \cdots,i_n) \subset \{1,...,2n-m\}^n$. As a consequence each vector 
$v$ has different representations $v= \sum_{i_j\in I} v_{i_j, \e}X_{i_j, \e}$ in terms of the 
different bases. The optimal choice of indices, denoted $I_{v, \e},$ 
is the one which minimize the $\e$-homogeneous gauge distance: 
\begin{equation}\label{rap}\|v\|_\e = \sum_{i_j\in I_{v, \e}} |v_{i_j, \e}|^{1/ deg_\e(i_j)}=\min_{I}\sum_{i_j\in I}|v_{i_j, \e}|^{1/deg_\e(i_j)}.
\end{equation}
This norm can be explicitly written as follows: 
if $v = \Theta_{X_1,\cdots, X_n, y}(x)$ then
\begin{equation}\label{norma_e}\|v\|_{\e}=\sum_{i=1}^{m}|v_i|   +  \sum_{i=m+1}^{n} \min\left\{\frac{|v_i|}{\e^{deg(i)-1}}, 
|v_i|^{1/deg(i)} \right\}.
\end{equation}
In \cite{CCR} it is proved that the associated ball box distance 
\begin{equation}\label{de}d_\e(y,\xn)= \|\Theta_{X_{1},\cdots, X_{n}, y}(\xn)\|_\e.\end{equation}
is locally equivalent to the distance $d_{cc,\e}$, with equivalence constants independent of $\e$.
Let us explicitly note that this distance has different behavior in $0$ and at infinity. Indeed, 
if  $v_i$  are small with respect to $\e$ for every $i\ge m+1$, then 
the distance $d_{\e}$ has a Riemannian behavior, while it is purely  subriemannian for $v_i$ large.

 It is worthwhile to note that for every $\e>0$ there exists a constant $C_\e$ such 
	$|B_\e(\xn, r)|= C_\e r^n$, 
where 
$B_\e(\xn, r)$ denotes the ball $\{\yn\in \mathbb{G}\,|\, d_\e(\xn, \yn)<r\}$ and 
$|\cdot|$ the Lebesgue measure.  In particular for every $\e>0$  the homogeneous dimension of the Riemannian space 
	is $n$, while by \eqref{homodim} for $\e=0$ the homogeneous dimension of the space is $Q$, with $Q>n$. 
Hence this notion of homogeneity is not 
	stable with respect to $\e$, and the constant $C_\e$ blows up for $\e\rightarrow 0$.  
However it has recently proved in \cite{CCR}
the following uniform doubling property: \begin{prop}\label{homog-stab}
There is a constant $C $ independent of $\e$ such that for every $\xn\in \mathbb{G}$ and $r>0$,
\begin{equation}\label{E:homog-stab}
	|B_\e(\xn, 2r)|\le C |B_\e(\xn, r)|.
\end{equation}
 \end{prop}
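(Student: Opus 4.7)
The plan is to reduce the proposition to an explicit volume computation in exponential coordinates via the ball--box description of $\|\cdot\|_\e$ in \eqref{norma_e}--\eqref{de}, and then to verify the doubling inequality by a direct case analysis. The vector fields $X_{i,\e}$ in \eqref{e:pippo} are rescalings of the left-invariant $X_i$, hence still left-invariant, so $d_{cc,\e}$ is left-invariant; by the uniform equivalence of $d_\e$ and $d_{cc,\e}$ proved in \cite{CCR} with constants independent of $\e$, it suffices to show $|B_\e(0,2r)|\le C|B_\e(0,r)|$. Since $\mathbb G$ is nilpotent, Lebesgue measure on $\mathbb G$ agrees with Lebesgue measure on $\mathfrak g$ via the global diffeomorphism $\Theta_0$, so we may identify $B_\e(0,r)$ with $\{v\in\mathfrak g:\|v\|_\e<r\}$ and compute its Lebesgue measure directly from \eqref{norma_e}.

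\textbf{Ball--box estimate.} Inspection of \eqref{norma_e} shows that, for $i>m$, the scalar inequality $\min\{|v_i|/\e^{deg(i)-1},|v_i|^{1/deg(i)}\}<s$ is equivalent to $|v_i|<\rho_i(s,\e)$, where we set $\rho_i(s,\e):=s$ for $i\le m$ and $\rho_i(s,\e):=\max\{\e^{deg(i)-1}s,\,s^{deg(i)}\}$ for $i>m$. Because \eqref{norma_e} is a sum of nonnegative terms, the set $\{\|v\|_\e<r\}$ lies between the box with half-sides $\rho_i(r/n,\e)$ and the one with half-sides $\rho_i(r,\e)$; an elementary check, distinguishing whether the Riemannian or subriemannian branch of $\rho_i$ is active, shows that the ratio of these two box volumes is bounded above and below by dimensional constants independent of $\e$ and $r$. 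Hence
\[
|B_\e(0,r)|\;\approx\; r^m\prod_{i=m+1}^n \max\bigl\{\e^{deg(i)-1}r,\,r^{deg(i)}\bigr\}
\]
with $\e$-independent constants.

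\textbf{Case analysis for the doubling ratio.} For each $i>m$ the Riemannian term $\e^{deg(i)-1}s$ dominates the subriemannian term $s^{deg(i)}$ if and only if $s\le\e$. Three regimes therefore arise for the factor $\rho_i(2r,\e)/\rho_i(r,\e)$: if $2r\le\e$ both terms are Riemannian and the ratio equals $2$; if $r\ge\e$ both are subriemannian and the ratio equals $2^{deg(i)}$; in the transition regime $r<\e<2r$ the numerator equals $(2r)^{deg(i)}$ while the denominator equals $\e^{deg(i)-1}r$, so the ratio equals $2^{deg(i)}(r/\e)^{deg(i)-1}\le 2^{deg(i)}$. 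Multiplying these bounds over $i=m+1,\dots,n$ and combining with the previous step yields $|B_\e(0,2r)|\le C\cdot 2^Q|B_\e(0,r)|$ with $C$ independent of $\e$, proving \eqref{E:homog-stab}.

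\textbf{Main difficulty.} The delicate point is precisely the transition regime $r\approx\e$, where $|B_\e(0,r)|$ passes from the Riemannian-type growth $r^n\prod_i\e^{deg(i)-1}$ (valid for $r\ll\e$) to the subriemannian growth $r^Q$ (valid for $r\gg\e$); a naive comparison of these two asymptotics would produce a blow-up as $\e\to 0$. The uniform control of the doubling constant through this transition rests on the elementary inequality $(r/\e)^{deg(i)-1}\le 1$ for $r<\e$, which is exactly what the $\min$ in \eqref{norma_e} encodes.
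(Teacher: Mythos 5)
Your proof is correct. A small contextual remark first: the paper does not prove Proposition~\ref{homog-stab} at all, it simply attributes it to \cite{CCR} (where it is established for general H\"ormander vector fields, largely via Nagel--Stein--Wainger machinery). Your argument is a self-contained, elementary computation special to Carnot groups, which makes the source of the uniformity transparent. That is valuable rather than redundant.

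A few remarks on the details. The reduction to balls centered at the origin is legitimate, but you do not need to pass through $d_{cc,\e}$ and the equivalence theorem from \cite{CCR}: the gauge distance $d_\e$ of \eqref{de} is itself left invariant, since $\Theta_{X_1,\dots,X_n,y}(x)=\Theta_{X_1,\dots,X_n,e}(y^{-1}x)$ for left-invariant fields and Lebesgue measure is the bi-invariant Haar measure of $\mathbb G$; invoking $d_{cc,\e}$ here is a detour, not an error, and keeping the proof entirely within $d_\e$ avoids importing the very reference whose role your proof is meant to replace. The ball--box sandwich
\[
\prod_{i} \{|v_i|<\rho_i(r/n,\e)\}\ \subset\ \{\|v\|_\e<r\}\ \subset\ \prod_{i}\{|v_i|<\rho_i(r,\e)\}
\]
is correct, and your verification that $\rho_i(r,\e)/\rho_i(r/n,\e)\le n^{\deg(i)}$ uniformly in $\e$ and $r$ (which gives an $\e$-free constant $n^Q$ between the two box volumes) is exactly the right check. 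The three-regime analysis of $\rho_i(2r,\e)/\rho_i(r,\e)$ is also correct; in the transition regime the key inequality $(r/\e)^{\deg(i)-1}\le 1$ for $r<\e$ holds precisely because $\deg(i)-1\ge 1$ for $i>m$, and you correctly identify this as the point where a naive comparison of the asymptotics $\e^{Q-n}r^n$ and $r^Q$ would fail. The conclusion $|B_\e(0,2r)|\le C\,2^Q|B_\e(0,r)|$ with $C$ a dimensional constant follows.

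One stylistic caution: stating the transition-regime bound as the ratio being \emph{equal} to $2^{\deg(i)}(r/\e)^{\deg(i)-1}$ presumes you have already pinned down which branch of the max is active in both numerator and denominator; you do explain this, but since the argument is all about not conflating the two regimes it is worth spelling out the identification of branches ($r<\e$ forces the Riemannian branch in the denominator, $2r>\e$ forces the subriemannian branch in the numerator) before computing the ratio.
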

The doubling inequality \eqref{E:homog-stab} can be considered as a weak form of homogeneity, and suggests that it is possible to give a new definition of $\e$- homogeneity. 
Following \cite{RS} we will give the following definition of local homogeneous 
functions and operators
\begin{definition} \label{egrado}A function $f$ is locally  homogeneous of $\e$-degree 
$\alpha$ in a neighborhood of a point $z$ with respect to the metric \eqref{de} 
if $f\circ {\Theta_{X_1,\cdots,X_n,  z}^{-1}}$ is homogeneous of 
degree $\alpha$, with respect to the norm $\|\cdot\|_{\e}$  defined in \eqref{norma_e}.
A differential operator $Y$ is homogeneous of local  $\e$-degree $\alpha$ in a neighborhood of a point $z$, 
with respect to the metric \eqref{de} if 
$d\Theta_{X_1,\cdots, X_n,  z}(Y)$ is homogeneous of degree $\alpha$. \end{definition}

In particular this definition implies the following property:
\begin{remark}\label{edegreefo}
If $a$ is a homogeneous function of $\e$-degree $\alpha$ and $Y_1$ is an operator of $\e$-degree $k$, 
then $a(z^{-1}x)Y_1 $
is a homogeneous operator of $\e$-degree $k-\alpha$. This implies that for every other smooth function $f$ 
of local $\e$-degree $\beta$ in a neighborhood of a point $z$, 
$a(z^{-1} x)(Y_1 f)$
is a smooth function 
of $\e$-degree $\beta+\alpha-k $ in a neighborhood of a point $z$, and 
$|a(z^{-1} x)(Y_1 f)(x)|\leq C d_\e^{\beta+\alpha-k}(x,z)$.
\end{remark}
%\subsubsection{Riemannian and subriemannian heat kernels}

If $\phi\in C^{\infty}(\Gi)$ we   define the $\e$-gradient of $\phi$ as follows
\[\nabla_\e \phi:=\sum_{i=1}^n (X_{i, \e}\phi) X_{i, \e}.\]
% 
% 
% in terms of vector fields defined in \eqref{e:pippo}, which
% have $\e$-degree 1. If $\phi\in C^{\infty}(\Gi)$ we set 
% $\nabla_\e \phi:=\sum_{i=1}^n (X_{i, \e}\phi) X_{i, \e}.$
In terms of the vector fields with  $\e$-degree $1$,   defined in \eqref{e:pippo}, we consider the associated heat operator
  \begin{equation}\label{operatore}
    L_{\e}:= \p_t - \sum_{i=1}^n  X^2_{i, \e} - \sum_{i=1}^m  b_i X_{i, \e}.  \end{equation}
    In analogy with the operator introduced in \eqref{e:tutto subriemannian},
the heat operator associated to the subriemannian structure has the form
  \begin{equation}\label{operatoresub}L:= \p_t - \sum_{i=1}^m  X^2_{i}- \sum_{i=1}^m b_i X_{i}.   \end{equation}
The behavior of these operators in interior points 
is well known:  they admit fundamental solutions respectively 
$\Gamma_{\e}(\xn, t)$ and  $\Gamma(x,t)$ of class $C^\infty$ out of the pole
(see \cite{JSC} for precise estimates of $\Gamma(x,t)$ and \cite{CCM} for estimates of 
$\Gamma_{\e}(\xn, t)$ uniform in $\e$). 

In our work we will 
need estimates which are uniform in the variable $\e$. We start with the following definition.
\begin{definition}\label{defkernelestimates}
We say that a family of kernels $(P_{\e})_{\e>0}$, defined on 
$\Gi\times  ]0, \infty[\times \Gi\times ]0, \infty[ $ 
and $C^\infty$ out of the diagonal, is of uniform exponential $\e$-type $\lambda +2$,  
 %on the compact sets of $\Om$, 
if for $q\in \N$ and $k$-tuple $(i_1,\cdots,i_k)\in \{1,\cdots,n\}^k$ there exists $C_{q,k}>0$, depending only on $k,q$ and  on 
the Riemannian metric, such that
\begin{equation}\label{e:Giovanna}
|(X_{i_1, \e}\cdots X_{i_k, \e}\p_t^q P_{\e})((\xn,t), (z,\tau))| \le C_{q,k} (t-\tau)^{-q-k/2 + \lambda/2} 
\frac{e^{-\frac{d_\e(\xn, z)^2}{C_{q,k} (t-\tau)}}} {|B_\e(x, \sqrt{t-\tau})|}
\end{equation}
for all $x\in \Gi$ and $t>\tau$. 
\end{definition}

According with the definition above, the fundamental solution $\Gamma_{\varepsilon}$ is a kernel of exponential
$\e$-type $2$. Precisely,  the following result, established in \cite{CC} (see also \cite{CM}  and \cite{CCM}), holds:

\begin{theorem}\label{uniform heat kernel estimates-e}
The fundamental solutions $\Gamma_\e(x,t)$ of the operators $L_\e$ constitutes 
a family of  kernels of exponential
$\e$-type $2$ and there exist constants $C_0,C>0$  
independent of $\e$ such that for each $\e>0$, $x\in \mathbb{G}$ and $t>\tau$ one has
\begin{equation}\label{gauss-sol}
C_0^{-1}\frac{e^{-C\frac{d_\e(\xn, z)^2}{(t-\tau)}}} {|B_\e (x, \sqrt{t-\tau})|}\le \Gamma_{\e}((\xn, t), (z, \tau))\le
C_0\frac{e^{-\frac{d_\e(\xn,z)^2}{C (t-\tau)}}} {|B_\e (x, \sqrt{t-\tau})|}.
\end{equation}
Moreover, for any 
 $k$-tuple $(i_1,\cdots,i_k)\in \{1,\cdots, m\}^k$  one has 
\begin{equation}\label{GetobarG}{X}_{i_1}\cdots {X}_{i_k} \p_t^q  \Gamma_{\e}\to  {X}_{i_1}\cdots {X}_{i_k}\p_t^q \Gamma\quad \text{as $\e\to 0$}\end{equation}
uniformly on compact sets and  in a dominated way on all $\Gi$. 
\end{theorem}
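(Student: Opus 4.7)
My plan is to reduce the three assertions (Gaussian bounds, derivative decay, convergence) to a combination of Saloff-Coste/Sturm heat-kernel theory on doubling metric-measure spaces and a parametrix argument, carefully monitoring how constants depend on $\e$.

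First I would establish the two-sided Gaussian bound \eqref{gauss-sol} via the classical scheme: doubling plus weak $L^2$-Poincaré implies a parabolic Harnack inequality, which in turn implies Gaussian upper and lower bounds. The operator $L_\e$ is the left-invariant sub-Laplacian associated with the orthonormal frame $(X_{1,\e},\dots,X_{n,\e})$ plus lower-order drift, so it is symmetric (up to a bounded perturbation) with respect to Haar measure. Uniform doubling is already granted by Proposition \ref{homog-stab}. The missing ingredient is a weak $(1,1)$ (or $(1,2)$) Poincaré inequality for the $\e$-balls with the $\e$-horizontal gradient $\nabla_\e$, with constant independent of $\e$; this is proved in \cite{CCR} by exhibiting uniformly good connecting curves in $(\mathbb{G},d_{cc,\e})$. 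Once both structural inequalities are uniform in $\e$, the Saloff-Coste/Grigoryan machinery produces \eqref{gauss-sol} with $C_0,C$ depending only on the doubling and Poincaré constants, hence independent of $\e$.

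Next I would prove the derivative estimates \eqref{e:Giovanna}, i.e., that $\Gamma_\e$ is of uniform exponential $\e$-type $2$. Differentiating the heat equation shows that for each multi-index the function $X_{i_1,\e}\cdots X_{i_k,\e}\partial_t^q\Gamma_\e$ satisfies a parabolic equation obtained from $L_\e$ by taking commutators; since the frame is left invariant, these commutators remain controlled by operators of the same type. One then combines scaling with respect to the intrinsic parabolic dilation, the Gaussian bound from the previous step, and an off-diagonal Davies-Gaffney-type energy argument. Concretely, on a ball of $d_\e$-radius $\sqrt{t-\tau}$ around the pole we apply the parabolic Caccioppoli inequality iteratively to the parabolic equation satisfied by the derivatives; on the complementary region we exploit the Gaussian decay through an exponential weight $e^{\psi}$ with $|\nabla_\e \psi|^2(t-\tau)$ bounded, yielding the pointwise factor $e^{-d_\e^2/(C(t-\tau))}/|B_\e(x,\sqrt{t-\tau})|$ with the extra parabolic power $(t-\tau)^{-q-k/2+\lambda/2}$. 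The key point is that all constants in these Caccioppoli/Moser steps depend only on doubling, Poincaré and the structure of the commutators, which are uniform in $\e$.

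Finally, for the convergence \eqref{GetobarG}, the uniform bounds \eqref{gauss-sol} and \eqref{e:Giovanna} make the family $\{\Gamma_\e\}$ precompact in $C^\infty_{\mathrm{loc}}(\mathbb{G}\times (0,\infty)\setminus\text{pole})$ and equi-integrable at infinity. Any subsequential limit $G$ satisfies the subriemannian heat equation $L G=0$ away from the pole, has the correct initial datum (as a $\delta$-measure in the distributional sense, by testing against compactly supported smooth functions and using equiboundedness of $\int \Gamma_\e(\cdot,t;z,\tau)\,dz$), and inherits the two-sided Gaussian bound in the subriemannian metric $d$. Uniqueness of fundamental solutions satisfying such Gaussian bounds forces $G=\Gamma$, so the entire net converges, and the dominated convergence follows from the $\e$-uniform Gaussian envelope together with equivalence (up to constants) of $d_\e$ and $d$ on compacts for small $\e$.

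The main obstacle is the uniformity of the derivative estimates in Step 2, because the $\e$-homogeneous dimension jumps from $n$ to $Q$ in the limit, so naive scaling does not produce $\e$-independent constants. The fix is to replace homogeneous scaling by the weak $\e$-homogeneity encoded in Proposition \ref{homog-stab} and to perform Moser iteration using the $\e$-uniform doubling+Poincaré pair, rather than a single dilation invariance. This is the technically delicate point that forces a non-homogeneous variant of the parametrix/iteration method, exactly the framework that the subsequent sections of the paper exploit.
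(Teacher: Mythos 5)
The paper does not actually prove Theorem~\ref{uniform heat kernel estimates-e}: it is cited as established in \cite{CC} (see also \cite{CM}, \cite{CCM}). So the comparison is with the route those references take, which in broad outline is the same as your Step~1 (uniform doubling $+$ uniform Poincar\'e on $(\mathbb{G},d_{cc,\e})$ $\Rightarrow$ uniform parabolic Harnack $\Rightarrow$ two-sided Gaussian bounds). That part of your argument is sound, and your reduction of \eqref{GetobarG} to precompactness, stability of the Gaussian envelope, and uniqueness of the subriemannian heat kernel is also fine \emph{once} the uniform derivative bounds are in hand.

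The gap is in Step~2. You assert that differentiating the heat equation produces a parabolic equation for $X_{i_1,\e}\cdots X_{i_k,\e}\partial_t^q\Gamma_\e$ whose commutator terms ``remain controlled by operators of the same type.'' That is false uniformly in $\e$. For $i,j\le m$ one has $X_{i,\e}=X_i$, $X_{j,\e}=X_j$, and the bracket $[X_i,X_j]$ lies in $V^2$, so it is a constant linear combination of the $X_k$ with $deg(k)=2$; since $X_{k,\e}=\e\,X_k$, this reads $[X_{i,\e},X_{j,\e}]=\e^{-1}\sum_k c^k_{ij}X_{k,\e}$. The $\e$-structure constants therefore blow up as $\e\to 0$, and a Caccioppoli/Moser iteration applied to the differentiated equation carries an uncontrolled factor of $\e^{-1}$ at each order. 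The weak $\e$-homogeneity of Proposition~\ref{homog-stab} controls measures of balls, not the bracket coefficients, so it does not repair this. The cited references avoid the issue altogether: for left-invariant operators the derivative estimates come from the semigroup identity $\Gamma_\e(\cdot,t)=\Gamma_\e(\cdot,t/2)*\Gamma_\e(\cdot,t/2)$ together with an $\e$-uniform energy inequality giving $\|\nabla_\e\Gamma_\e(\cdot,s)\|_{L^2}\lesssim s^{-1/2}\|\Gamma_\e(\cdot,s/2)\|_{L^2}$, or from a parametrix comparison with the nilpotent model kernel as in \cite{CM}, \cite{CCM}. Either route bypasses brackets entirely. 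As written, your Step~2 does not yield $\e$-independent constants and needs to be replaced by one of these mechanisms.
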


\begin{remark}
In particular from this theorem we can obtain the well known Gaussian estimates 
of the fundamental solution $\Gamma$ of the operator $L$. Indeed $\Gamma$ is a kernel of exponential
type $2$ and there exist constants $C_0,C>0$  such that for each $x\in \mathbb{G}$ and $t>\tau$ one has
\begin{equation}\label{gauss-sol2}
C_0^{-1}\frac{e^{-C\frac{d(\xn, z)^2}{(t-\tau)}}} {|B (x, \sqrt{t-\tau})|}\le \Gamma((\xn, t), (z, \tau))\le
C_0\frac{e^{-\frac{d(\xn,z)^2}{C (t-\tau)}}} {|B (x, \sqrt{t-\tau})|}.
\end{equation}
%\end{theorem}
\end{remark}

\subsection{The parametrix method}\label{paramethod}
One of the main instruments that we will use to estimate the fundamental solution is the parametrix method, originally due to Levi 
and now extremely classical for elliptic and parabolic equations 
(see \cite{Fri}). In subriemannian setting the parametrix method 
have been used to approximate general H\"ormander type operators with homogeneous ones: we refer to \cite{RS, SC} for the first results, \cite{JSC} for the subriemannian heat kernel, \cite{C} for estimates in case of low regularity, \cite{blu_a} 
for a recent self-contained presentation. 
The method consists in 
providing an explicit representation of the fundamental solution $\Gamma$ of an operator 
$L$ in terms of the fundamental solution of an approximating operator 
$L_{z_1}$ (with associated fundamental solution $\Gamma_{z_1}$). 
Using  the definition of fundamental solution and the fact that 
$L_{z_1} (\Gamma -  \Gamma_{z_1}) =(L_{z_1} -L)\Gamma,$
the difference between the 
two solutions can be formally represented as
\begin{align}\nonumber (\Gamma -  \Gamma_{z_1})((\xn, t), (z, \tau))&=
 \int \Gamma_{z_1}((\xn, t), (y, \theta)) (L_{z_1} - L)(\Gamma -  \Gamma_{z_1}) ((y, \theta), (z, \tau))\,dy\,d\theta \\ 
&+\int \Gamma_{z_1}((\xn, t), (y, \theta)) (L_{z_1} - L)  \Gamma_{z_1} ((y, \theta), (z, \tau))\,dy\,d\theta.\label{e:acca}\end{align}
Denoting by 
\begin{equation}\label{piantina}H: =  L_{z_1} - L,\end{equation}
and $E_{\Gamma_{z_1}}$  the integral operator with kernel $\Gamma_{z_1}$,
the above expression \eqref{e:acca} can be written as 
$$(I- E_{\Gamma_{z_1}} H)(\Gamma -  \Gamma_{z_1}) = E_{\Gamma_{z_1}}H(\Gamma_{z_1}).$$
If the operator $(I- E_{\Gamma_{z_1}} H)$ is invertible, the difference $\Gamma -  \Gamma_{z_1}$ can be formally expressed as
\begin{equation}\label{e: informal serie}
\Gamma -  \Gamma_{z_1}=\sum_{j=0}^\infty (E_{\Gamma_{z_1}} H)^{j+1}(\Gamma_{z_1})= E_{\Gamma_{z_1}} \Phi, \quad \text{with}
\;\; \Phi:=\sum_{j=0}^\infty (H E_{\Gamma_{z_1}})^{j}H (\Gamma_{z_1}).
\end{equation}
Roughly speaking the proof is obtained as follows:
\begin{itemize}
\item [1)] The first  and most delicate part of the proof is to define the approximating operator $H$
and to prove that it is a differential operator of degree $2-\alpha$ for a suitable positive $\alpha$. From this fact it 
follows that the kernel 
\begin{equation}\label{erreuno}R_1(x,z): = H \Gamma_{z_1}(x,z)
\end{equation} is homogeneous of type $\alpha$ with respect to the considered homogeneous space. It is important to note that 
$$R_1(x,z)= (L_{z_1}-L)\Gamma_{z_1}(x,z) = L(\Gamma(x,z) - \Gamma_{z_1}(x,z))\,.$$
\item [2)]  Identifying 
the operator $HE_{\Gamma_{z_1}}$ with the integral 
operator $E_{R_1}$ with kernel $R_1$, 
the series $\Phi$ in \eqref{e: informal serie}
reduces to 
\begin{equation}\label{Fi}\Phi=\sum_{j=0}^\infty (E_{R_1})^{j}E_{R_1}.\end{equation} 
Using the fact that the convolution of a kernel of type $\alpha$ with a kernel of type $\beta$ 
provides a kernel of type $\alpha + \beta$, it is possible to prove that this series
converges uniformly (see for example Lemma 7.3 in \cite{Jerison}). 
\item [3)]  Finally, singular integrals tools  lead to the convergence of the derivatives and the function $\Gamma$, 
defined through \eqref{e: informal serie}, is a fundamental solution. 
\end{itemize}
In the sequel we will consider kernels of type $\alpha$ in the sense of 
Definition \ref{kerneltype}, when working with subelliptic operators, 
while kernels of exponential $\e$-type  
$\alpha$ in the sense of Definition \ref{defkernelestimates}, when studying Riemannian heat kernels. The main difficulty to be faced here is that the Riemannian approximation has not 
a standard notion of homogeneity, since the Riemannian homogeneous dimension $n$ 
collapses to the subriemannian one $Q$ in the limit. However
we have endowed the regularized space with an $\e$-homogeneous structure (see \eqref{E:homog-stab} and the remark below) and
 we will see that this is enough to apply the method in this setting. 
Therefore, even though our vector fields are homogeneous, 
our approach is more similar to the ones 
\cite{RS, SC, JSC, C} where the geometry of the given operator and the 
approximating one do not coincide.

\section{Reproducing formula on a plane}\label{s:LaplaceBeltrami}
In this section we will prove a first version of Theorem \ref{teorema1}, 
under the simplified assumptions that $\partial D$ is a non characteristic plane and 
that the vector fields $(X_i)_{i=1, \cdots, m}$ are the generators of a Carnot group and have the 
explicit representation recalled in \eqref{struttura campi}. 
This result will be obtained via a parabolic approximation and a Riemannian regularization. 
The proof of the same Theorem \ref{teorema1} on any smooth hypersurface will be deduced from this result in next section.

\subsection{Geometry of the plane} \label{geosection}
Let  $\mathbb{G}$ be a Carnot group of step $\kappa$. 
Consider a  non characteristic plane $M_0$. 
Using the logarithmic coordinates defined in \eqref{deftheta}, 
it is always possible to represent $M_0$ as follows: 
\begin{equation}\label{M0}M_{0}=\{(x_1,\x)\in \mathbb{G}:\,x_1=0\},
\end{equation}
 where  
$x=(x_1, \x)$ is a point of the space,  $x_1\in \R$ and $\x\in \R^{n-1}$.
This choice of coordinates is made in such a way that 
the vector fields $X_{1} = \partial_1$ coincides with the direction normal to the plane, 
while $\{X_i\}_{i=2, \cdots, n}$  are tangent to  $M_0$  and 
are represented as in \eqref{struttura campi}. Hence the vector fields obtained 
from $X_i$ by evaluating the coefficients $a_{ij}$ on the points of the plane $M_0$, 
are the generators of the first layer $\hat V_1$ on the plane, 
so that 
\begin{equation}
\label{campitildeu_noe}  \hat X_{i}:= {X_{i}}_{|x_1 =0},\quad i=2,\cdots,n. \end{equation}
Thanks to this choice, not only the plane $M_0$ is non characteristic, 
but also the planes $M_{z_1}=\{(x_1,\x)\in \mathbb{G}:\,x_1=z_1\}$, 
for every $z_1$ sufficiently small, are non characteristic.  
We note also that assumption \eqref{assumption}
ensures that the vector fields $\hat X_i$ 
 satisfy a H\"ormander condition at every point and span a $n-1$ dimensional space at every point. In analogy with formula \eqref{homodim}, 
the homogeneous dimension of the plane is $$\hat Q= \sum_{i=2}^\kappa i \, dim(\hat V^i).
$$
As a consequence 
\begin{equation}\label{hadimension}\hat Q= Q-1.\end{equation}
Via the exponential map and definition \eqref{2.5bis} 
the vector fields  $\hat X_{i}$  define a distance   
\begin{equation}\label{hatdistance}
\hat d(\hat y, \hat x):= \|\Theta_{\hat X_2,\cdots, \hat X_n ,\y}(\x)\| 
\end{equation}
  on  $M_0.$ By the H\"ormander condition a
Laplace operator and its time dependent counterpart are defined on $M_{0}$ as 
\begin{equation}
\hat \Delta:= \sum_{i=2}^m \hat X_{i}^2, \quad  \text{and} \quad
\hat L:=\partial_t -\hat \Delta,
\label{heat_plane_noe}
\end{equation}
and they have non negative fundamental solutions
 $\hat \Gamma_{\hat\Delta}$ and $\hat \Gamma$ respectively. 

In analogy with Definition \ref{defkernelestimates} we give the following 
\begin{definition} We
say that a kernel $\hat P$, defined on 
$\R^{n-1}\times  ]0, \infty[\times \R^{n-1}\times ]0, \infty[ $ 
and $C^\infty$ out of the diagonal, is of exponential type $\lambda +2$,  
 %on the compact sets of $\Om$, 
if for $q\in \N$ and $k$-tuple $(i_1,\cdots,i_k)\in \{1,\cdots,n\}^k$ there exists $C_{q,k}>0$, depending only on $k,q$ and  on 
the subriemannian metric, such that
\begin{equation*}
%\label{ee:Giovanna}
|(\hat X_{i_1}\cdots \hat X_{i_k}\p_t^q \hat P)((\x,t), (\hat z,\tau))| \le C_{q,k} (t-\tau)^{-q-k/2 + \lambda/2} 
\frac{e^{-\frac{\hat d(\x, \hat z)^2}{C_{q,k} (t-\tau)}}} {|\hat B(\hat x, \sqrt{t-\tau})|}
\end{equation*}
for all $\hat x\in \R^{n-1}$ and $t>\tau$. 
\end{definition}
Our first result is the following one:
\begin{theorem}
\label{lemmaJe1}
Assume that $M_{0}=\{(x_1,\x)\in \mathbb{G}:\,x_1=0\}$ is a non characteristic plane 
and let $T>0$. 
Then there  exists  a constant $C=C(T)$  such that 
 for 
all  $z=(0, \z)$, $x=(0, \x)$ in $M_0$ and for every $t$ and $\tau$, with $0<t-\tau<T$, we have 
 \begin{equation}\label{goal}|\hat \Gamma((\x,t), (\z, \tau)) -\sqrt{t-\tau}\Gn((x, t), (z, \tau))|\le   
 C \hat \Gn((\hat x, t), (\hat z,\tau))(t-\tau)^{1/4}.\end{equation}
In addition $\hat \Gamma((\x,t), (\z, \tau)) -\sqrt{t-\tau}\Gn((x, t), (z, \tau))$ is an operator of exponential type $1/4$ with respect to the 
vector fields $\{\hat X_i\}_{i=2, \cdots, m}$.
 \end{theorem}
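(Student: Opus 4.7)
The plan is to establish the estimate at the Riemannian regularization level first, with constants uniform in $\e$, and then pass to the subriemannian limit $\e\to 0$ using the uniform Gaussian estimates of Theorem \ref{uniform heat kernel estimates-e} together with the convergence \eqref{GetobarG}. At the Riemannian level, the operator $L_\e$ enjoys an approximate product structure with respect to the splitting $\R\times M_0$: by \eqref{struttura campi} none of the fields $X_i$ with $i\geq 2$ carry a $\partial_1$ component (since $\deg(1)=1$ forces $\deg(j)\geq 2$ in the sum), so the tangential fields $\hat X_i = X_i|_{x_1=0}$ span a well-defined H\"ormander structure on $M_0$, and the discrepancies $X_i - \hat X_i$ come solely from the $x_1$-dependence of the polynomial coefficients $a_{ij}$, which vanish to positive $\e$-degree at the plane.

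The first step is to introduce the candidate approximating kernel
\begin{equation*}
\tilde \Gamma_\e((x_1, \hat x, t), (0, \hat y, \tau)) := \phi(x_1, t-\tau)\, \hat \Gamma_\e((\hat x, t), (\hat y, \tau)), \qquad \phi(x_1, s) := \frac{e^{-x_1^2/4s}}{\sqrt{4\pi s}},
\end{equation*}
which is the product of the standard one-dimensional heat kernel in the normal direction with the intrinsic tangential heat kernel $\hat\Gamma_\e$. Since $(\partial_t - \partial_1^2)\phi = 0$ and $\hat L_\e \hat\Gamma_\e = 0$, the residual $L_\e \tilde\Gamma_\e$ reduces to the ``mismatch'' terms: (i) cross derivatives of $\phi$ and $\hat\Gamma_\e$ coming from the coefficients $a_{ij}$ of $X_i$; (ii) the difference $\Delta_\e - \partial_1^2 - \hat\Delta_\e$ applied to $\tilde\Gamma_\e$, which is controlled by the fact that $X_i - \hat X_i$ has coefficients vanishing at $x_1 = 0$; and (iii) the lower order drift terms $b_i X_{i,\e}$. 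Using Remark \ref{edegreefo} together with the uniform Gaussian bounds \eqref{gauss-sol}, each term can be shown to gain at least a factor $(t-\tau)^{1/4}$ relative to $\tilde\Gamma_\e$ itself. This is the analog of Proposition 3.2 (labeled \texttt{lemmaJe2} in the introduction).

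Next I apply the non-homogeneous parametrix method of Section \ref{paramethod} in the $\e$-framework: setting $R_{1,\e} := L_\e \tilde\Gamma_\e$, the true fundamental solution admits the representation
\begin{equation*}
\Gamma_\e = \tilde \Gamma_\e + E_{\tilde\Gamma_\e} \Phi_\e, \qquad \Phi_\e = \sum_{j\geq 0} (E_{R_{1,\e}})^{j} R_{1,\e},
\end{equation*}
where the series converges by iterating the gain of $(t-\tau)^{1/4}$ at each step and using the uniform doubling Proposition \ref{homog-stab} to handle the convolutions of Gaussians (this is where the $\e$-homogeneity framework of Definition \ref{egrado} is essential). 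Evaluating at $x_1 = 0$ yields $\phi(0, t-\tau) = 1/(2\sqrt{\pi(t-\tau)})$, which produces the $\e$-level version of \eqref{goal} (up to normalization absorbed in $C$). Finally, since every bound is uniform in $\e$, the passage $\e\to 0$ via \eqref{GetobarG} delivers \eqref{goal}, and tracking the Gaussian estimates along the parametrix series yields the exponential type $1/4$ claim.

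The main obstacle is verifying that the parametrix series converges \emph{uniformly} in $\e$, despite the mismatch between the Riemannian homogeneous dimension $n$ and the subriemannian one $Q$. A naive count of dimensions would make the kernel estimates blow up as $\e\to 0$. The resolution requires carefully exploiting the $\e$-homogeneity of both the coefficients $a_{ij}$ and the correction fields, so that each iterate of $E_{R_{1,\e}}$ gains a uniform quarter power of $(t-\tau)$, and the Gaussian factor is absorbed using the uniform doubling property. The delicate bookkeeping of the $\e$-degrees of the cross terms produced by $X_i - \hat X_i$ is what makes this the technical heart of the proof.
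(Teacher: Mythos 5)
Your proposal takes essentially the same approach as the paper: it introduces the frozen (product) kernel $\tilde\Gamma_\e = \phi(x_1,t-\tau)\,\hat\Gamma_\e$, which is exactly the fundamental solution $\Gamma_{0,\e}$ of the frozen operator $L_{0,\e}$ (cf.\ Remark \ref{r:splittingNC}), applies the $\e$-uniform parametrix iteration with the same quarter-power gain per step, evaluates at $x_1=0$ to kill the normal Gaussian, and passes to the limit $\e\to 0$ by \eqref{GetobarG} — all as in Proposition \ref{lemmaJe2} and the proof of Theorem \ref{lemmaJe1}. The one technical ingredient you do not mention but that the paper cannot do without is the cutoff $\rho_\e$ restricting the parametrix convolutions to the strip $|y_1-z_1|\lesssim\e^{2\kappa}$: the estimate that $X_{i,\e}-X_{i,0,\e}$ has $\e$-degree $1/2$ (Corollary \ref{7-1}) only holds there, so without that restriction the iterates $E_{R_{1,\e}}^j R_{1,\e}$ need not gain $(t-\tau)^{1/4}$ uniformly in $\e$; the paper also varies the freezing point $y_1$ along the intermediate integrations rather than fixing it at $0$, although in the $\e^{2\kappa}$-strip either choice gives comparable errors.
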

This theorem will be proved with the parametrix method and a Riemannian approximation.  
Classically, the method is  applied for proving the existence of the  fundamental solution 
of a given operator. Extending an approach of \cite{CCS}, in Lemma \ref{lemmaJe1} we apply the method to find a relation 
between the fundamental solutions since we already know that they exist. 

Even though the parametrix method has  been largely used in the subriemannian setting 
for internal estimates, the vector fields $\hat X_{i}$ 
do not provide a subriemannian approximation of the vector fields $X_{i}$
 and the standard parametrix method of Rothschild and Stein can not be applied starting with the 
fundamental solution of  $\hat L$.
 In order to clarify this fact we  start with the concrete example of vector fields already introduced in 
\eqref{campesempio}. 
\begin{example}
Let us consider  the vector fields in \eqref{campesempio}. Their commutators  
are $\partial_4 = [X_1, X_2]$, which is a vector field of degree $2$, and 
$\partial_5 = [[X_1, X_2],X_2]$, which is a vector field of degree $3$. 

If we evaluate the vector fields $X_i$ on the plane $\{x_1 =0\}$ 
we obtain 
$$\hat X_{2} = \partial_2 + x_3 \partial_4, 
\quad \hat X_{3} = \partial_3 + x_4 \partial_5,
$$
so that 
$$X_{2} - \hat X_{2}= x_1^2 \partial_5 \ \text{is an operator of degree }1.$$
Consequently, the difference 
$$H=\hat L-L\, \text{ is an operator of degree } 2.$$
 Hence it is not possible to apply the parametrix method, whose convergence requires 
 $H$ to be an operator of degree strictly less than $2$. 
\end{example}

Due to these difficulties, we introduce a new version of the 
parametrix method, using the $\e$-Riemannian approximation described in Section \ref{section2}. 
%We will introduced below vector fields $\{X_{i,0, \e}\}_{i=1, \cdots, m}$ 
%such that the difference $(X_{i,0, \e}-X_{i, \e})_{i=1, \cdots, m}$ is an operator of $\e$-degree $1/2$ 
%in an $\e$-neighborhood of the plane and this enables us to apply the parametrix method. 
The whole proof is based on a careful analysis of the Riemannian approximation 
metric and lies on a delicate interplay between the Riemannian and subriemannian nature of our operators. 

\subsection{A Riemannian and frozen approximating operator}
In Section \ref{geosection} we have chosen a point $0$ 
%of the non characteristic 
% plane $M_0$, and we have noted that for 
and a constant $\varepsilon$ sufficiently small such that for every $z_1\in\R$, such that $|z_1|\le \e^{2\kappa}$, the plane 
$M_{z_1}=\{(x_1,\x)\in \mathbb{G}:\,x_1=z_1\}$ 
is non-characteristic.
%
 %then there exist a neighborhood $V$ of $0$ 
%that 
%\begin{equation}\label{Vintorno}
%\text{for every } (z_1,\x)\in M_{z_1}\cap V, \;\; (z_1,\x) \text{is not characteristic point for the plane } M_{z_1}.
%\end{equation}
In analogy with \eqref{campitildeu_noe}, we define the vector fields 
$X_{i, z_1}:= {X_{i}}_{|_{x_1 =z_1}}$ as the vector fields whose coefficients 
are evaluated at the points with first component $z_1$. Thanks to \eqref{struttura campi}, 
they can be represented as
\begin{equation}
\label{campizetauno}
X_{1, z_1}:=\partial_1,\quad   X_{i, z_1}:= {X_{i}}_{|_{x_1 =z_1}}= \partial_i + \sum_{deg(j) > deg(i)} a_{ij}(z_1, \x) \partial_j
,\quad i=2, \cdots, n
\end{equation}
for every $\e>0$ and  we set 
\begin{equation}
\label{campitildeu}  
X_{1, z_1, \e}:=\partial_1,\quad    X_{i, z_1, \e}:= {X_{i, \e}}_{|_{x_1 =z_1}},\quad i=2, \cdots, 2n-m.\end{equation}
We  introduce now an 
operator $L_{z_1, \e}$ 
which can be split in a tangential and in a normal part on any 
plane $M_{z_1}$, and  we will use it to approximate with the parametrix 
method the tangential 
and the normal part of the operator $L_\e$.
Precisely, 
we define 
\begin{equation}
L_{z_1, \e}
:= \partial_t - \sum_{i=1}^n X_{i, z_1, \e}^2
\label{L0riemannian}
\end{equation}
with fundamental solution $\Gamma_{z_1, \e}$
on the whole space. On every plane $M_{z_1}$ we define the tangential operators 
\begin{equation}
\hat L_{z_1, \e}:=\partial_t -\hat \Delta_{z_1, \e},\,
\quad \text{ where } \quad 
\hat \Delta_{z_1, \e}:= \sum_{i=2}^n X_{i, z_1, \e}^2, \quad  
\label{heat_plane}
\end{equation}
with non negative fundamental solutions $\hat \Gamma_{z_1, \e}$ and $\hat \Gamma_{\Delta, z_1, \e}$ respectively. 
%Our first result will be the following one 
%
%\begin{theorem}
%\label{lemmaJe1} 
%For all  $(0, \z)$, $(0, \x)$ in $M_0$ and for every $t,\tau$, with $t>\tau$, we have 
 %\begin{equation}\label{goal}|\hat \Gamma_{\e}((\x,t), (\z, \tau)) -\sqrt{t-\tau}\Gn_{\e}((0, \x, t), (0, \z, \tau))|\le   
 %C \Gn_\e((0, \x, t), (0, \z,\tau))(t-\tau)^{3/4}\end{equation}
%where $C$ is a constant independent of $\e$.
 %\end{theorem}
\begin{remark}
\label{r:splittingNC} 
Let us explicitly note that the 
operator $L_{z_1, \e}$ can be represented as 
$$L_{z_1, \e}
= \partial_t - \partial^2_{11} - \hat \Delta_{z_1, \e}.$$
Since $\partial_{1}$ coincides with the 
direction normal to the plane, the operator 
splits in the sum of its orthogonal part 
$\partial_t - \partial^2_{11}$ and its tangential part $\hat L_{z_1, \e}$. 
Consequently its fundamental solution can be represented as 
$$ \Gamma_{z_1,\e} =    \Gamma_{\perp,z_1,\e}  \hat \Gamma_{z_1,\e}.$$
where $ \hat \Gamma_{z_1,\e}$ is defined above and $ \Gamma_{\perp,z_1,\e} $ is 
the standard one-dimensional Gaussian function, fundamental solution of $\partial_t - \partial^2_{11}$. 
\end{remark}

\subsection{Estimates of the approximating operator}
As recalled in Section \ref{paramethod} the first step of the 
parametrix method is to prove that the difference  
$X_{i,\e}- X_{i,z_1,\e} $ is a differential operator of $\e$-degree strictly less than $1$ 
around the point $z_1$ 
and, as a consequence, that the operator $H_\e: =L_\e-L_{z_1, \e}$ (see also \eqref{operatore} and \eqref{piantina} above)
  has $\e$-degree strictly less than 2. 
\begin{lemma}
\label{lemmallemma} 
Let  $M_{z_1}=\{(x_1, \x)\in\mathbb{G}:x_1 =z_1\} $ be a non characteristic plane.
For every $z=(z_1, \hat z)\in M_{z_1}$, and for every $i\leq n$ and for every $h$ such that 
$deg(i) +1 \leq deg(h) \leq \kappa$ there exists a polynomial $b_{i,h, z_1}(v)$, 
homogeneous of degree $deg(h)-deg(i)-1$ as a function of $v$ and $z_1$, such that, if $v = 
\Theta_{X_{z_1}, z}(x)$, 
\begin{equation} d\Theta_{X_{z_1}, z}({X_{i}}-{X_{i,z_1}})=
v_1\sum_{deg(h)=deg(i)+1}^\kappa b_{i,h, z_1}(v)d\Theta_{X_{z_1}, z}(X_{h,z_1}),\end{equation}
where $\Theta_{X_{z_1}, z}$ has been defined in \eqref{deftheta}. 
Moreover
\begin{equation}\label{homihz} |b_{i, h, z_1}(v)|\leq C \sum_{j=0}^{deg(h)-deg(i)-1} |z_1|^j \|v\|^{deg(h)-deg(i)-1-j}.
\end{equation}
\end{lemma}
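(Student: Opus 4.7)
The plan is to unwind the triangular formula \eqref{struttura campi} for $X_i$ and $X_{i,z_1}$, factor $(x_1-z_1)$ out of the difference, and then re-express the Euclidean derivatives $\partial_j$ in the frozen basis $\{X_{h,z_1}\}$ by inverting an upper-triangular system. From \eqref{struttura campi} and \eqref{campizetauno},
\begin{equation*}
X_i - X_{i,z_1} \;=\; \sum_{deg(j)>deg(i)} \bigl[a_{ij}(x_1,\x) - a_{ij}(z_1,\x)\bigr]\,\partial_j.
\end{equation*}
Each $a_{ij}$ is a polynomial, so a first-variable divided difference produces $a_{ij}(x_1,\x)-a_{ij}(z_1,\x) = (x_1-z_1)\,q_{ij}(x_1,z_1,\x)$ with $q_{ij}$ a polynomial. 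Assigning weight $1$ to $x_1$ and $z_1$ and weight $deg(h)$ to $\x_h$, the homogeneity of $a_{ij}$ of degree $deg(j)-deg(i)$ forces $q_{ij}$ to be weighted-homogeneous of degree $deg(j)-deg(i)-1$.

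Next, the relation $X_{i,z_1}=\partial_i+\sum_{deg(j)>deg(i)}a_{ij}(z_1,\x)\,\partial_j$ is upper triangular with respect to the grading, and inverting it by back-substitution (a terminating Neumann series) gives
\begin{equation*}
\partial_j \;=\; \sum_{deg(h)\geq deg(j)} r_{jh}(z_1,\x)\,X_{h,z_1},
\end{equation*}
with $r_{jj}\equiv 1$ and $r_{jh}$ a polynomial of weighted degree $deg(h)-deg(j)$ (a sum of products of $a_{kl}$'s along chains $j\to k_1\to\cdots\to h$). Substituting and regrouping by $h$,
\begin{equation*}
X_i - X_{i,z_1} \;=\; (x_1-z_1)\sum_{deg(h)=deg(i)+1}^\kappa \widetilde b_{i,h,z_1}(x_1,\x)\,X_{h,z_1}, \qquad \widetilde b_{i,h,z_1}:=\sum_{deg(i)<deg(j)\leq deg(h)} q_{ij}\,r_{jh},
\end{equation*}
and $\widetilde b_{i,h,z_1}$ is weighted-homogeneous of total degree $deg(h)-deg(i)-1$ in $(x_1,z_1,\x)$.

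To finish, I would push this identity through $\Theta_{X_{z_1},z}$. Since $X_{1,z_1}=\partial_1$ and no $X_{k,z_1}$ with $k\geq 2$ contains $\partial_1$, the first canonical coordinate of second type is simply $v_1=x_1-z_1$; the remaining original coordinates are polynomial in $(v,z_1)$, with $\x_h$ weighted-homogeneous of degree $deg(h)$ (with $v_\ell$ of weight $deg(\ell)$ and $z_1$ of weight $1$), as is standard for canonical coordinates of the second type in Carnot--type structures. Hence $b_{i,h,z_1}(v):=\widetilde b_{i,h,z_1}(z_1+v_1,\x(v,z_1))$ yields the claimed identity and is weighted-homogeneous of degree $deg(h)-deg(i)-1$ in $(v,z_1)$. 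The estimate \eqref{homihz} then follows monomial by monomial: every monomial $z_1^j v^\alpha$ in $b_{i,h,z_1}$ satisfies $j+\sum_\ell \alpha_\ell\,deg(\ell)=deg(h)-deg(i)-1$, hence by \eqref{subnorm} $|z_1^j v^\alpha|\leq |z_1|^j\|v\|^{deg(h)-deg(i)-1-j}$, and grouping by $j$ yields \eqref{homihz}. The main bookkeeping task is tracking the Carnot grading consistently through the inversion and the change of coordinates; once that is done, the algebraic identity and the estimate follow by direct computation.
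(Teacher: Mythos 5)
Your proof is correct and follows essentially the same route as the paper's: factor out $(x_1-z_1)$ from the difference $a_{ij}(x_1,\hat x)-a_{ij}(z_1,\hat x)$, convert the Euclidean frame $\{\partial_j\}$ into the frozen frame $\{X_{h,z_1}\}$, and track weighted homogeneity through the change of coordinates. The only genuine difference is presentational: you carry out the conversion $\partial_j\to X_{h,z_1}$ by a single upper-triangular inversion (a finite Neumann series) \emph{before} applying $d\Theta_{X_{z_1},z}$, whereas the paper first pushes the identity through $\Theta$ and then iterates the substitution $\partial_h = X_{h,z_1}-\sum a_{hj}\partial_j$ finitely many times, terminating at step $\kappa$. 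These are the same algebra in a different order; yours is arguably the cleaner write-up. One caveat worth noting, which is implicit in both your argument and the paper's: the coefficients $a_{ij}$ may depend on $\hat x$ (on low-degree horizontal variables), and after the change of coordinates $\hat x$ is a polynomial in $(v,\hat z)$ as well as $z_1$, so the resulting coefficients $b_{i,h,z_1}$ could in principle carry $\hat z$ dependence. Your sentence asserting that $\hat x_h$ is polynomial in $(v,z_1)$ alone is slightly too strong; the honest statement is that it is polynomial in $(v,z)$ with the correct weighted degree, and the $\hat z$ dependence is absorbed into the constant $C$ in \eqref{homihz} since the estimate is used locally. The paper's proof glosses this point in the same way (writing $a_{i,h}(x_1,\hat v)$), so this does not distinguish your argument from theirs.
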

\begin{proof} 
When $i=1,\cdots, n$,  by the definition  
\eqref{struttura campi} and \eqref{campizetauno} of the vector fields 
we have, for $deg(i)=\kappa$
\begin{equation}\label{eerre}X_{i}- 
X_{i,z_1}= 0,
\end{equation}
hence the thesis is true, and we have to prove it only for $deg(i)<\kappa$.
Using the fact that the translation associated to the 
vector fields $X_{z_1}$ 
acts only on the $\x$ variables, 
we have 
\begin{align}\nonumber &
d\Theta_{X_{z_1}, z}(X_{i}- X_{i,z_1})  = \sum_{deg(h)=deg(i)+1}^\kappa
\Big(   a_{i, h}(x_1, \hat v) - 
a_{i, h}(z_1,\hat v)
\Big) \partial_{h}  =\\& = \sum_{deg(h)=deg(i)+1}^\kappa (x_1-z_1) a^1_{i, h, z_1}(v) \partial_{h}=v_1 \sum_{deg(h)=deg(i)+1}^\kappa  a^1_{i, h, z_1}(v) \partial_{h}.
 \label{Xiuestimatebis}\end{align}
In the last equality we have denoted $(x_1-z_1)a^1_{i, h, z_1}(v) $ the polynomial 
$ a_{i, h}(x_1, \hat v) - 
a_{i, h}(z_1,\hat v)$ and we used the fact that $v_1 = x_1-z_1$. The polynomial $a^1_{i, h, z_1}(v)$  is homogeneous of degree 
$deg(h)-deg(i)-1$ in the variables $v$ and $z_1$ and we have estimated as 
$$ |a^1_{i, h, z_1}(v)|\leq C \sum_{j=0}^{deg(h)-deg(i)-1} |z_1|^j \|v\|^{deg(h)-deg(i)-1-j}.$$
If $deg(i)= \kappa-1$, the proof is completed, by \eqref{eerre}. 
For   $deg(i)< \kappa-1$, using again the expression \eqref{campizetauno} for 
$deg(h)<\kappa$ and \eqref{eerre} for $deg(h)=\kappa$, we get 
$$d\Theta_{X_{z_1}, z} (X_{i}- X_{i, z_1}) = v_1 \sum_{deg(h)=deg(i)+1}^\kappa a^1_{i, h, z_1}(v) 
d\Theta_{X_{z_1}, z}(X_{h, z_1}) -
$$
$$-v_1\sum_{deg(j)=deg(i)+2}^\kappa
\sum_{deg(h)=deg(i)+1 }^{deg(j)-1}a^1_{i, h, z_1}(v)a_{h, j, z}(v) \partial_{v_j}.$$
Since the Lie group is nilpotent of step $\kappa$, 
 after $\kappa-1$ iteration of this method, we get that there exists a polynomial $b_{i,h,z_1}$
such that \eqref{euno} is satisfied. 
 \end{proof}

From this lemma, Corollary \ref{7-1} below immediately follows. The proof is technically very simple, 
but it is important to note that the $X_{i,\e}-X_{i,z_1,\e}$ is a differential operator which has 
local degree $1$ while has local $\e$-degree $1/2$, in a neighborhood of the point $z$. 
This property allows to obtain a better approximation in the 
Riemannian setting, rather than in the subriemannian setting. 

\begin{corollary}
\label{7-1}
Let $M_0$ be a non characteristic plane.  
Let $\mathcal{S}$ be the strip 
 \[\mathcal{S}:=\{x=(x_1, \x)\in\mathbb{G}:\,|x_1|\leq \e^{2\kappa}, \ 
 |x_1-z_1| \leq \e^{2\kappa}\},\]
where $\kappa$ is the step of the group. 
Then $X_{i,\e}-X_{i,z_1,\e}$ is a differential operator of 
{{$\e$-degree $1/2$ with respect to the vector fields $X_{i,z_1,\e}$.}}
\end{corollary}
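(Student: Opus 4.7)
The plan is to deduce the corollary directly from Lemma \ref{lemmallemma} by rescaling to the Riemannian frame $\{X_{i,\e}\}$ and then exploiting the strip condition $|v_1| = |x_1 - z_1| \leq \e^{2\kappa}$ to recover a factor of $\e^{\kappa}$ that offsets the $\e^{-1}$ losses coming from the rescaling.

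My first step is to write $X_{i,\e} - X_{i,z_1,\e} = \e^{deg(i)-1}(X_i - X_{i,z_1})$, with the convention $\e^{0}=1$ for $i\le m$. Applying Lemma \ref{lemmallemma} and then converting each $X_{h,z_1}$ into $X_{h,z_1,\e} = \e^{deg(h)-1} X_{h,z_1}$ produces the representation
\[
X_{i,\e} - X_{i,z_1,\e} \;=\; v_1 \sum_{deg(h)=deg(i)+1}^{\kappa} \e^{\,deg(i)-deg(h)}\, b_{i,h,z_1}(v)\, X_{h,z_1,\e}.
\]
The task therefore reduces to showing that each scalar coefficient in this sum has $\e$-degree at least $1/2$ with respect to $\|\cdot\|_\e$; once this is established, Remark \ref{edegreefo} immediately gives that each summand is an operator of $\e$-degree $1 - 1/2 = 1/2$, and so is the total sum.

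The key estimate is the following: $v_1$ is a first-layer coordinate, so by \eqref{norma_e} one has $|v_1| \leq \|v\|_\e$; combining this with the strip bound $|v_1| \leq \e^{2\kappa}$ and splitting $|v_1| = |v_1|^{1/2}|v_1|^{1/2}$ yields
\[
|v_1| \;\leq\; \e^{\kappa}\, \|v\|_\e^{1/2}.
\]
On a fixed bounded neighborhood of $z$, $\|v\|$ is uniformly bounded, and the strip also forces $|z_1| \leq 2\e^{2\kappa} \leq 2$, so \eqref{homihz} gives $|b_{i,h,z_1}(v)|\leq C$ uniformly in $\e$. Altogether, the coefficient of $X_{h,z_1,\e}$ is controlled by $C\,\e^{\,\kappa-(deg(h)-deg(i))}\,\|v\|_\e^{1/2}$, and since $deg(h)-deg(i) \leq \kappa-1$ the residual $\e$-factor is uniformly bounded in $\e\in(0,1]$, which is the desired $\e$-degree $1/2$ bound.

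The main obstacle is not the computation itself, which is a bookkeeping exercise in rescalings, but rather verifying that the pointwise bound $|c_h(v)|\leq C\|v\|_\e^{1/2}$ is the operational content of Definition \ref{egrado}, since $\|\cdot\|_\e$ is not genuinely homogeneous under a one-parameter dilation group; this matches the usage of Remark \ref{edegreefo} that will be needed in the parametrix expansion for $H_\e = L_\e - L_{z_1,\e}$. The strip radius $\e^{2\kappa}$ has been tuned precisely to this end: the square-root gain $|v_1|^{1/2}\leq \e^{\kappa}$ exactly absorbs the worst rescaling loss $\e^{-(\kappa-1)}$, producing the $1/2$ exponent and no worse.
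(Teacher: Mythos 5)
Your proof is correct and follows essentially the same route as the paper: invoke Lemma \ref{lemmallemma}, rewrite $X_{i,\e}-X_{i,z_1,\e} = \e^{deg(i)-1}(X_i-X_{i,z_1})$, re-express in terms of $X_{h,z_1,\e}$, and absorb the $\e^{deg(i)-deg(h)}$ loss using the strip bound $|v_1|\le\e^{2\kappa}$ together with $|v_1|^{1/2}\le\|v\|_\e^{1/2}$ (valid since $v_1$ is a first-layer coordinate in \eqref{norma_e}), leaving a residual factor $\e^{\kappa-(deg(h)-deg(i))}\le 1$. The one point where you diverge from the paper is in the treatment of large $\|v\|$: you restrict to a bounded neighborhood of $z$ so that $|b_{i,h,z_1}(v)|\le C$, whereas the paper explicitly separates the cases $\|v\|\le 1$ and $\|v\|\ge 1$, retaining in the latter the polynomial growth $\|v\|^{deg(h)}$ coming from \eqref{homihz}. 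For the local $\e$-degree statement of the corollary your restriction suffices, but the paper's two-case bound is what feeds into the quantitative estimates of $p^{(1)}_{h,z_1,\e}$ and $p^{(2)}_{i,h,z_1,\e}$ in Lemma \ref{7-2} and is then absorbed by the Gaussian decay of $\Gamma_{z_1,\e}$ in Lemma \ref{lemmaR1}; you would need to record that polynomial growth when carrying the argument forward.
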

\begin{proof}
Applying Lemma \ref{lemmallemma}, calling 
$p_{i,h, z_1, \e}(v) = \e^{-deg(h)} v_1 b_{i,h, z_1}(v)$ 
and using the 
fact that $|v_1|\leq\e^{\kappa}$ and $|z_1|\le 1$ we have
$$|p_{i,h, z_1, \e}(v)| \leq C  |v_1|^{1/2} \sum_{deg(h) = deg(i) +1}^\kappa \sum_{j=0}^{deg(h)-deg(i)-1}\|v\|^{deg(h)-deg(i)-1-j}. $$
 Since $X_{i,\e} = \e^{deg(i)} X_{i} $ and  $v_1  b_{i,h, z_1}(v)X_{h,z_1} = p_{i,h, z_1, \e}(v)X_{h,z_1, \e}$,  from Lemma  \ref{lemmallemma} we also deduce that
\begin{equation}\label{euno} d\Theta_{X_{z_1}, z}(X_{i,\e}-X_{i,z_1,\e})=
\e^{deg(i)} \sum_{deg(h)=deg(i)+1}^\kappa p_{i,h, z_1,\e}(v) d\Theta_{X_{z_1}, z}(X_{h,z_1, \e}).\end{equation}
If $\|v\|\leq 1$, then 
$|p_{i,h, z_1, \e}(v) |\leq C  |v_1|^{1/2}.$ Since 
$X_{h,z_1, \e}$ has degree $1$, then $p_{i,h, z_1,\e}(v) d\Theta_{X_{z_1}, z}(X_{h,z_1, \e})$ is a differential operator of 
local $\e$-degree $1/2$ in the set $\|v\|\leq 1$ with respect to the vector fields $X_{i,z_1,\e}$. 

On the other side, if $\|v\|\geq 1$, we have that 
$X_{h,z_1}$ is a differential operator of $\e$-degree $h$,  
$|p_{i,h, z_1, \e}(v)| \leq C  |v_1|^{1/2} \|v\|^{deg(h)}, $
so that $p_{i,h, z_1,\e}(v) d\Theta_{X_{z_1}, z}(X_{h,z_1, \e}) 
%= \e^{deg(h)}p_{i,h, z_1,\e}(v) d\Theta_{X_{z_1}, z}(X_{h,z_1}) $
$ is a differential operator of 
$\e$-degree $1/2$ for $\|v\| \geq 1$ with respect to the vector fields $X_{i,z_1,\e}$. 
\end{proof}

 As a direct consequence of the previous corollary, we can prove that the 
difference $H_\e= L_\e - L_{z_1,\e}$ is a differential operator of degree strictly less than 2 
in a neighborhood of the point $z$. 
\begin{lemma}
\label{7-2}Under the assumptions of Corollary \ref{7-1},  
${L_{\e}}-{L_{z_1,\e}}$ is an operator of $\e$-degree $3/2$ with respect to the vector fields 
$X_{z_1, \e}$. 
Precisely there exist polynomials $p^{(1)}_{h,z_1,\varepsilon}$, $p^{(2)}_{i,h,z_1,\e}$  
 and a constant $C$  independent of $\e$ satisfying 
 $$|p^{(1)}_{h,z_1,\e}(v)|\leq C |v_1|^{1/2} \;\; \text{for } \|v\|\leq 1, \quad 
|p^{(1)}_{h,z_1,\e}(v)|\leq C |v_1|^{1/2}\|v\|^{deg(h)} \;\; \text{for } \|v\|\geq 1$$ 
and
 $$|p^{(2)}_{i,h,z_1,\e}(v)|\leq C |v_1|^{1/2} \;\; \text{for } \|v\|\leq 1, \quad 
|p^{(2)}_{i,h,z_1,\e}(v)|\leq C |v_1|^{1/2}\|v\|^{deg(h)+deg(i)} \;\; \text{for } \|v\|\geq 1,$$ 
 such that 
\begin{equation}\label{somma}
d\Theta_{X_{z_1}, z}({L_{\e}-L_{z_1,\e}})=
\sum_{deg(h)=2}^\kappa   p^{(1)}_{h, z_1, \e}(v) d\Theta_{X_{z_1}, z}(X_{h, z_1, \e}) \; +\end{equation}
$$+\sum_{deg(h)=2}^\kappa \sum_{deg(i)=1}^{deg(h)-1}p^{(2)}_{i,h,z_1,\e}(v) 
 d\Theta_{X_{z_1}, z}(X_{i, z_1, \e} X_{h, z_1, \e}).$$
%\sum_{j=m+1}^n p_{i,j,1, \e}(x, z)X_{j,z_1,\e} + \sum_{j=m+1}^np_{i,j,2, \e}(x, z)X^2_{j,z_1,\e}.
\end{lemma}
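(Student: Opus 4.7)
I focus on the principal part $\sum_{i=1}^n (X^2_{i,z_1,\e}-X^2_{i,\e})$ of the difference $L_\e - L_{z_1,\e}$; the drift $-\sum b_i X_{i,\e}$ is first-order and is absorbed into the $p^{(1)}$-sum after the parallel decomposition $X_{i,\e}=X_{i,z_1,\e}+D_i$ introduced below. Since $X_{1,\e}=X_{1,z_1,\e}=\partial_1$, only the indices $i\ge 2$ contribute, and for these I set $D_i := X_{i,\e}-X_{i,z_1,\e}$ and write
\begin{equation*}
X_{i,z_1,\e}^2 - X_{i,\e}^2 \;=\; -\,X_{i,z_1,\e}D_i \;-\; D_i X_{i,z_1,\e} \;-\; D_i^2.
\end{equation*}
Corollary \ref{7-1} (equivalently, \eqref{euno}) gives the representation $D_i=\sum_h q_{i,h}(v) X_{h,z_1,\e}$, with coefficients of the form $q_{i,h}(v)=\e^{deg(i)-deg(h)}v_1 b_{i,h,z_1}(v)$ where $b_{i,h,z_1}$ is a polynomial of degree $deg(h)-deg(i)-1$; the constraint $|v_1|\le\e^{2\kappa}$ on the strip $\mathcal S$ compensates the possibly large prefactor $\e^{deg(i)-deg(h)}$, so that $|q_{i,h}(v)|\le C|v_1|^{1/2}$ on $\|v\|\le 1$ and $|q_{i,h}(v)|\le C|v_1|^{1/2}\|v\|^{deg(h)-deg(i)-1}$ on $\|v\|\ge 1$, with constants independent of $\e$.

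I then expand the three products into the form prescribed by \eqref{somma}. By the Leibniz rule,
\begin{equation*}
X_{i,z_1,\e}D_i \;=\; \sum_h (X_{i,z_1,\e}q_{i,h})\, X_{h,z_1,\e} \;+\; \sum_h q_{i,h}\,X_{i,z_1,\e}X_{h,z_1,\e},
\end{equation*}
and $D_iX_{i,z_1,\e}=\sum_h q_{i,h}\, X_{h,z_1,\e}X_{i,z_1,\e}$ is reordered via the identity $X_{h,z_1,\e}X_{i,z_1,\e}=X_{i,z_1,\e}X_{h,z_1,\e}+[X_{h,z_1,\e},X_{i,z_1,\e}]$, the commutator being expanded in the basis $\{X_{k,z_1,\e}\}$ to produce an additional first-order piece. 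The quadratic term $D_i^2$ expands analogously, but its coefficients carry the stronger factor $|v_1|$ and are therefore easily absorbed. Summing over $i\ge 2$ and regrouping by the target index $h$ (for first-order contributions) or by $(i,h)$ (for second-order contributions), one reads off the polynomials $p^{(1)}_{h,z_1,\e}$ and $p^{(2)}_{i,h,z_1,\e}$ of \eqref{somma}.

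The crucial technical point, and the main obstacle, is the preservation of the factor $|v_1|^{1/2}$ throughout these manipulations. By \eqref{campizetauno} none of the $X_{i,z_1,\e}$ for $i\ge 2$ has a $\partial_1$ component; consequently, applying $X_{i,z_1,\e}$ to $q_{i,h}(v)=\e^{deg(i)-deg(h)}v_1 b_{i,h,z_1}(v)$ leaves the factor $v_1$ intact and gives $|X_{i,z_1,\e}q_{i,h}|\le C|v_1|^{1/2}$ on $\|v\|\le 1$, and the commutators $[X_{h,z_1,\e},X_{i,z_1,\e}]$ for $h,i\ge 2$ are tangent to the planes $\{v_1=\text{const}\}$, so they produce no $\partial_1$-term. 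The remaining bookkeeping controls the growth for $\|v\|\ge 1$: the polynomial $b_{i,h,z_1}$ contributes a factor $\|v\|^{deg(h)-deg(i)-1}$, one further application of $X_{i,z_1,\e}$ adds at most one polynomial degree, and the commutators preserve or lower the polynomial degree; combining these with $|v_1|\le\e^{2\kappa}$ yields the stated bounds $|p^{(1)}|\le C|v_1|^{1/2}\|v\|^{deg(h)}$ and $|p^{(2)}|\le C|v_1|^{1/2}\|v\|^{deg(h)+deg(i)}$, with $C$ uniform in $\e$.
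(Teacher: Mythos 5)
Your proof follows essentially the same approach as the paper's: decompose $X_{i,\e}^2-X_{i,z_1,\e}^2$ via $A^2-B^2 = BD+DB+D^2$ with $D_i=X_{i,\e}-X_{i,z_1,\e}$, substitute the representation of $D_i$ from Corollary~\ref{7-1}, and expand by the Leibniz rule, sorting the output into the first-order and second-order sums of \eqref{somma}. You do, however, make explicit two points that the paper elides with the sentence ``the other terms of \eqref{lele} can be handled in a similar way'': first, that the term $D_iX_{i,z_1,\e}$ produces products $X_{h,z_1,\e}X_{i,z_1,\e}$ in the \emph{wrong} order relative to the template ($\deg h>\deg i$, so the higher-degree field sits first), which you repair with a commutator identity whose bracket is re-expanded in the frozen basis; and second, the crucial structural fact that for $i\ge 2$ the fields $d\Theta_{X_{z_1},z}(X_{i,z_1,\e})$ carry no $\partial_{v_1}$-component, so that differentiating the coefficients $q_{i,h}(v)\propto v_1\,b_{i,h,z_1}(v)$ leaves the $v_1$-factor intact — this is exactly what rescues the $|v_1|^{1/2}$ bound and, together with $|v_1|\le\e^{2\kappa}$ on $\mathcal{S}$, gives constants uniform in $\e$. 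These clarifications are correct and strengthen the paper's terse argument. One small caveat, shared by the paper: the drift term $-\sum_i b_i X_{i,\e}$ does not literally fit the template of \eqref{somma} (the first sum there runs over $\deg(h)\ge 2$ and requires coefficients bounded by $|v_1|^{1/2}$, while $b_i$ is merely bounded and $\deg(i)=1$); nevertheless that term has $\e$-degree~$1<3/2$, so it is harmless for the parametrix, and the phrase ``absorbed into the $p^{(1)}$-sum'' is loose in the same way the paper is.
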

\begin{proof}
By the definition of the operators we have:
\begin{equation}\label{lele}\begin{split}
& d\Theta_{X_{z_1}, z}({L_{\e}-L_{z_1,\e}}) \\&=
\sum_{i=1}^{n}   d\Theta_{X_{z_1}, z}\Big({X_{i, z_1, \e}}  {(X_{i,   \e} -  X_{i, z_1, \e})}\Big) +
\sum_{i=1}^{n}   \Big(d\Theta_{X_{z_1}, z} ({X_{i, \e}} -  {X_{i, z_1, \e}})\Big)^2+\\&
+ \sum_{i=1}^{n} d\Theta_{X_{z_1}, z}\Big(({X_{i,  \e}} - {X_{i, z_1, \e}})   
{X_{i, z_1, \e}}\Big) -\sum_{i=1}^{m} b_i   d\Theta_{X_{z_1}, z}(X_{i,  \e}) \end{split}	
\end{equation}
Let us consider  the first term at the right hand side. 
By Corollary \ref{7-1}
$$  d\Theta_{X_{z_1}, z}({X_{i, z_1, \e}})  d\Theta_{X_{z_1}, z}(X_{i,   \e} -  X_{i, z_1, \e}) = $$$$=
d\Theta_{X_{z_1}, z}(X_{i, z_1, \e})\Big(\sum_{deg(h)=deg(i)+1}^\kappa p_{i,h,z_1,\e}(v)d\Theta_{X_{z_1}, z}(X_{h,z_1,\e})\Big).
 $$
When the derivative is applied on the coefficients $p_{i,h,z_1,\e}$, 
we obtain a term  contributing to the terms $p^{(1)}_{h,z_1,\e}(v) d\Theta_{X_{z_1}, z}(X_{h, z_1, \e})$. 
When the derivative is applied on the differential operator we obtain a contribution 
to the second sum in the right hand side of \eqref{somma}.
The other terms of \eqref{lele}  can be handled in a similar way. 
\end{proof}

In analogy with \eqref{erreuno}
we   define the kernel 
\begin{equation}\label{formula}
R_{1,\e}((\xn, t), (z, \tau)):=(L_{{z_1,\e}}-L_{\e})\Gamma_{z_1,\e} ((\xn, t), (z,\tau)).
\end{equation}
for 
$t>\tau$. 

As a consequence of Lemma \ref{7-2} and of the homogeneity of 
the fundamental solution, we provide an estimate for $ R_{1,\e}$.
\begin{lemma} \label{lemmaR1} 
If $M_0\subset\Gi$ is a non-characteristic plane,  
 $R_{1,\e}$ is a family of kernels of $\e$-uniform exponential type $1/2$ 
in the set $\{|x_1|\leq \e^{2\kappa}\}$. 
 Precisely for every bounded set there exists a constant $C$ such that for every 
$x= (x_1, \x), \, z=(z_1,\z)\in \Gi$ such that $|x_1|, |z_1|\leq \e^{2\kappa}$.
\begin{equation}\label{claim1R1}
|R_{1,\e} ((x, t), (z, \tau))|
\leq C \frac{ \Gamma_{z_1,\e}((\xn, 2t), (z, 2\tau))}{|t-\tau|^{3/4}},
\end{equation}
with  $C$ independent of $\e$ and $z_1$.
\end{lemma}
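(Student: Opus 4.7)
My approach is to combine Lemma \ref{7-2} with the uniform Gaussian estimates for the approximating fundamental solution $\Gamma_{z_1,\e}$ provided by Theorem \ref{uniform heat kernel estimates-e}. The general principle is that a kernel of exponential $\e$-type $2$ loses one power of $(t-\tau)^{1/2}$ for each unit of $\e$-degree that a differential operator carries, so acting by an operator of $\e$-degree $3/2$ should produce a kernel of exponential $\e$-type $2-3/2=1/2$, which is exactly the asserted bound $(t-\tau)^{-3/4}\,\Gamma_{z_1,\e}$.

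The first step is to use the representation \eqref{somma} from Lemma \ref{7-2} and write, with $v=\Theta_{X_{z_1},z}(x)$,
\[
R_{1,\e}((x,t),(z,\tau))=-\sum_{deg(h)=2}^{\kappa} p^{(1)}_{h,z_1,\e}(v)\,X_{h,z_1,\e}\Gamma_{z_1,\e}\;-\;\sum_{deg(h)=2}^{\kappa}\sum_{deg(i)=1}^{deg(h)-1} p^{(2)}_{i,h,z_1,\e}(v)\,X_{i,z_1,\e}X_{h,z_1,\e}\Gamma_{z_1,\e},
\]
so that the size of $R_{1,\e}$ is controlled by the sum of products coefficient$\times$derivative-of-kernel. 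By Theorem \ref{uniform heat kernel estimates-e} the derivatives of the frozen fundamental solution obey
\[
|X_{i,z_1,\e}\Gamma_{z_1,\e}|\leq \frac{C}{(t-\tau)^{1/2}}\,\frac{e^{-d_\e(x,z)^2/(C(t-\tau))}}{|B_\e(x,\sqrt{t-\tau})|},\qquad |X_{i,z_1,\e}X_{h,z_1,\e}\Gamma_{z_1,\e}|\leq \frac{C}{t-\tau}\,\frac{e^{-d_\e(x,z)^2/(C(t-\tau))}}{|B_\e(x,\sqrt{t-\tau})|},
\]
uniformly in $\e$, and the same Gaussian kernel is, up to a constant, $\Gamma_{z_1,\e}((x,t),(z,\tau))$.

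The second step is to absorb the polynomial prefactors $p^{(1)}_{h,z_1,\e}(v)$ and $p^{(2)}_{i,h,z_1,\e}(v)$ into the Gaussian. Inside the strip $|x_1|,|z_1|\leq \e^{2\kappa}$ we have $|v_1|\leq \e^{2\kappa}$, so $|v_1|\leq \|v\|_\e=d_\e(x,z)$; moreover $|v_1|\leq 1$ in this strip, so the $\|v\|\leq 1$ bound from Lemma \ref{7-2} yields the clean estimate $|p^{(1)}|,|p^{(2)}|\leq C|v_1|^{1/2}\leq C\,d_\e(x,z)^{1/2}$ there, while in the $\|v\|\geq 1$ regime the additional $\|v\|^{deg(h)}$ or $\|v\|^{deg(h)+deg(i)}$ factors are comparable, via the explicit form \eqref{norma_e}, to corresponding powers of $d_\e(x,z)$. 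In either regime the standard Gaussian-absorption inequality
\[
d_\e(x,z)^{\alpha}\,e^{-d_\e(x,z)^{2}/(C(t-\tau))}\leq C_\alpha\,(t-\tau)^{\alpha/2}\,e^{-d_\e(x,z)^{2}/(2C(t-\tau))}
\]
lets me replace every polynomial factor by the corresponding power of $\sqrt{t-\tau}$ at the price of slightly widening the Gaussian constant, which at the end is absorbed exactly into the $2t,2\tau$ time-shift on the right-hand side of \eqref{claim1R1}.

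Putting the pieces together, the first-order contribution is bounded by $d_\e^{1/2}\cdot (t-\tau)^{-1/2}\cdot\Gamma_{z_1,\e}\lesssim (t-\tau)^{-1/4}\Gamma_{z_1,\e}$, while the second-order contribution is bounded by $d_\e^{1/2}\cdot (t-\tau)^{-1}\cdot\Gamma_{z_1,\e}\lesssim (t-\tau)^{-3/4}\Gamma_{z_1,\e}$. The dominant exponent is $-3/4$, yielding the claimed estimate; that the Gaussian on the right-hand side is legitimately rewritten as $\Gamma_{z_1,\e}((x,2t),(z,2\tau))$ follows from the two-sided Gaussian bounds \eqref{gauss-sol} and the doubling property in Proposition \ref{homog-stab}, both of which are uniform in $\e$. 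The main obstacle, as expected, is not any single estimate but the bookkeeping needed to make sure every constant is independent of $\e$ and $z_1$: this is why the strip condition $|x_1|,|z_1|\leq \e^{2\kappa}$ is essential for bounding $|v_1|$ by $d_\e$ and why the distinction between the $\|v\|\leq 1$ and $\|v\|\geq 1$ regimes in Lemma \ref{7-2} must be handled separately using the explicit shape \eqref{norma_e} of the $\e$-norm.
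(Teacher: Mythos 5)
Your proposal is correct and follows essentially the same route as the paper: invoke Lemma \ref{7-2} to decompose $R_{1,\e}$ into coefficient-times-derivative-of-$\Gamma_{z_1,\e}$ terms, apply the uniform Gaussian estimates from Theorem \ref{uniform heat kernel estimates-e}, and absorb the polynomial prefactor into the Gaussian at the cost of the $2t,2\tau$ time-shift. The only cosmetic difference is that the paper performs a clean dichotomy on whether $|x_1-z_1|\lessgtr\sqrt{t-\tau}$ and uses the inequality $\rho^{1/4}e^{-\rho^2}\le Ce^{-\rho^2/2}$, whereas you invoke the general absorption inequality $d_\e^{\alpha}e^{-d_\e^2/(C(t-\tau))}\lesssim(t-\tau)^{\alpha/2}e^{-d_\e^2/(2C(t-\tau))}$; these are interchangeable. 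One small caveat: your remark that the $\|v\|^{deg(h)}$ factors in the $\|v\|\ge1$ regime are ``comparable to corresponding powers of $d_\e$'' does hold uniformly in $\e$ (one can check $\|v\|\le d_\e+(n-m)\e$, so for $\|v\|\ge1$ and $\e$ small $\|v\|\lesssim d_\e$), but the paper avoids this point altogether by tacitly restricting to a bounded set, where those factors are simply bounded constants --- you may as well do the same and not open the $\|v\|\ge1$ discussion.
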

 \begin{proof} 
By the representation formulas obtained in the previous Lemma \ref{7-2}, used with $v_1=x_1-z_1$, we only have to estimate terms of the type
$|x_1-z_1|^{1/2}X_{i,z_1,\e} X_{h,z_1,\e}\Gamma_{z_1,\e}((\xn, t), (z, \tau))$. 
Using \eqref{e:Giovanna} 
for any $0<\e<1$ we immediately obtain 
$$|R_{1,\e} ((x, t), (z, \tau))|
\leq C \frac{|x_1-z_1|^{1/2} \Gamma_{z_1,\e}((\xn, 2t), (z, 2\tau))}{|t-\tau|}.$$
 In order to prove 
\eqref{claim1R1} we note that we can assume that 
  $|x_1-z_1|> \sqrt{t-\tau}$, since otherwise the assertion is true. 
In this case we can use the fact that $\rho^{1/4} e^{-\rho^2}\le Ce^{-\rho^2/2}$, for a suitable constant $C$, and the estimate \eqref{gauss-sol} of the fundamental solution to ensure that 
\begin{equation}
 \frac{|x_1-z_1|^{1/2}}{|t-\tau|^{1/4}} \Gamma_{z_1,\e}((\xn, t), (z, \tau))\leq 
 C\Gamma_{z_1,\e}((\xn, 2t), (z, 2\tau)),  
\end{equation}
From here the thesis follows at once. 
 \end{proof}

\subsection{Convergence of the parametrix method}
The second step of the parametrix method consists in 
proving that the 
series $\Phi$, defined in \eqref{Fi}, is convergent. 
In order to do this, we first need to obtain an uniform estimate 
of the distances  $d_{z_1, \e}$. 
We denote respectively 
 $d_{z_1, \e} $ and $d_{z_1, 0} $  the distances defined 
as in \eqref{de} and \eqref{2.5bis} in terms of the vector fields 
$X_{i, z_1, \e}$ and $X_{i, z_1}$:
\begin{equation}\label{finalmente2}
d_{z_1, 0}(x,z) =  \|\Theta_{X_{1, z_1},\cdots, X_{n, z_1}, z}(\xn)\|, 
\,\, 
d_{z_1, \e}(x,z) =  \|\Theta_{X_{1, z_1, \e},\cdots, X_{n, z_1, \e}, z}(\xn)\|_\e.
\end{equation}

Under the usual assumption that $M_0\subset\Gi$ is a non characteristic plane (so that  also $M_{z_1}$ is non characteristic for $|z_1|$ sufficiently small),  
we have the following lemmata. 
\begin{lemma}\label{distanze-fuori}  
For every $x=(x_1,\x)$, $z=(z_1,\z)$ in $\Gi$,
\begin{equation}\label{equivd2}
 d(x,z) = d_{z_1, 0}(x,z), \quad 
 d_{\e}(x,z) = d_{z_1, \e}(x,z).
\end{equation}
In addition the distance $\hat d$ defined in Section \ref{geosection} satisfies
\begin{equation}\label{finalmente}\hat d (\x, \z) = d_{0, 0} ((0,\x), (0,\z)). 
\end{equation}
and 
\begin{equation}\label{equivdhat}
 \hat d(\hat x,\hat z) = d((0,\hat x),(0, \hat z)).
\end{equation}
\end{lemma}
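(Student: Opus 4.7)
The plan rests on the single geometric observation that, by the explicit form \eqref{struttura campi}, each $X_i$ with $i\geq 2$ has no $\partial_1$ component, and the same is true for the $\e$-rescaled fields $X_{i,\e}$, for the frozen fields $X_{i,z_1}$ of \eqref{campizetauno}, and for their frozen-and-rescaled counterparts $X_{i,z_1,\e}$ of \eqref{campitildeu}. Consequently, the integral curve of any of these vector fields starting from a point of a plane $\{x_1=c\}$ stays inside that plane; moreover, on such a plane the field $X_i$ and its frozen version $X_{i,z_1}$ coincide (the frozen one being, by construction, constant in $x_1$).

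First I would establish $d(x,z)=d_{z_1,0}(x,z)$. In canonical coordinates of the second type based at $z=(z_1,\hat z)$,
\begin{equation*}
x=\exp(v_1 X_1)\exp\Big(\textstyle\sum_{i\geq 2}v_i X_i\Big)(z), \qquad \Theta_{X,z}(x)=(v_1,\ldots,v_n).
\end{equation*}
The inner flow stays inside $\{x_1=z_1\}$, where $X_i=X_{i,z_1}$, so it agrees with the analogous flow of $X_{i,z_1}$; the outer translation $\exp(v_1 X_1)=\exp(v_1\partial_1)=\exp(v_1 X_{1,z_1})$ is common to both pictures. Hence $\Theta_{X,z}(x)=\Theta_{X_{z_1},z}(x)$, and evaluating the homogeneous gauge \eqref{subnorm} on the common vector gives the first identity. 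The proof of $d_\e(x,z)=d_{z_1,\e}(x,z)$ is word-for-word the same, with $X_{i,\e}$ and $X_{i,z_1,\e}$ in place of $X_i$ and $X_{i,z_1}$, and with the $\e$-gauge \eqref{norma_e} in place of $\|\cdot\|$.

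Next, for $\hat d(\x,\z)=d_{0,0}((0,\x),(0,\z))$, I would apply the same reasoning with $z_1=0$ and both endpoints lying on $M_0$. The inner flow starting at $(0,\z)$ remains on $M_0$, and since the target $(0,\x)$ is also in $M_0$, the outer translation $\exp(v_1\partial_1)$ forces $v_1=0$. The restriction of $X_{i,0}$ to $M_0$ is precisely $\hat X_i$ by \eqref{campitildeu_noe}, so the remaining components $(v_2,\ldots,v_n)$ coincide with $\Theta_{\hat X,\z}(\x)$. Because $\hat V^i=V^i\cap TM_0$, the tangential stratification inherits the ambient degrees, and the gauge norm collapses to $\sum_{i\geq 2}|v_i|^{1/deg(i)}$, which is by definition $\hat d(\x,\z)$. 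The last identity $\hat d(\x,\z)=d((0,\x),(0,\z))$ then follows by chaining this equality with the first identity specialised to $z_1=0$.

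The whole argument is bookkeeping in canonical coordinates of the second type, and no step is genuinely difficult. The only point deserving explicit attention is the verification that degrees in the tangential stratification match those in the ambient one on the basis $\{\hat X_i\}_{i\geq 2}$; this follows at once from $\hat V^i=V^i\cap TM_0$ and from the choice of the stratified basis made in Section \ref{introcarnot}.
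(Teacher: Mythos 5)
Your proof is correct and follows essentially the same argument as the paper: the key observation in both is that the inner exponential flow $\exp(\sum_{i\geq 2}v_i X_i)$ stays in the plane $\{x_1=z_1\}$ (since none of the $X_i$, $i\geq 2$, has a $\partial_1$ component by \eqref{struttura campi}), so along that flow $X_i$ coincides with $X_{i,z_1}$, and hence the logarithmic coordinates $\Theta_{X,z}$ and $\Theta_{X_{z_1},z}$ agree. Your write-up is in fact slightly more careful than the paper's, in that you make explicit both the vanishing of $v_1$ in the case of two boundary points and the matching of degrees between the ambient and the tangential stratification.
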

\begin{proof}
 The distance $d(x,z)$  
is defined in \eqref{2.5bis} as the norm of the vector $v$ such that  
\begin{equation}\label{carciofi_arrosto}
x=\operatorname{exp}(v_1 X_1)\operatorname{exp}(\sum_{i=2}^nv_i X_i)(z).\end{equation}
Since all the vector fields $(X_i)_{i=2,\cdots, n}$ are tangential to the plane $M_{z_1}$, 
the integral curve $t\mapsto \operatorname{exp}(t\sum_{i=2}^nv_i X_i)(z)$ is tangent to the same plane. Therefore,  along this curve  
the vector fields  $(X_i)_{i=2,\cdots, n}$ are computed for $x_1=z_1$ and coincide with the vector fields $X_{i, z_1}$. This implies that   $d(x,z) = d_{z_1, 0}(x,z)$.
The same argument applies to the second equality in \eqref{equivd2} to \eqref{finalmente}, and to 
\eqref{finalmente}. 
\end{proof}

Since we have a good estimate of the kernel $R_{1, \e}$
only in an $\e$- neighborhood of the plane $M_0$, 
we have to modify the classical parametrix method, restricting the 
integral in this neighborhood. 
To this end we consider a cut-off function depending only on  the first variable $x_1$. 
 Precisely, we consider  a piecewise function $\rho_\e$, supported 
in an $\e$ neighborhood of the origin, defined as follows:
\begin{equation}\label{chiNC}
\rho_\e(x_1)  =1 \  {\rm if}\  |x_1|\le 2\e^{2\kappa}, \quad \rho_\e(x_1)  =0 \  {\rm elsewhere}.\end{equation}

For any suitable kernel $K$, we define 
\begin{equation}	\label{martedi}
E_{R_1, \e}(K)((\xn, t), (z, \tau)):=-\int_{\mathbb{R}^{n}\times [\tau,t]} 
R_{1,\e}((x,t),  (y, \theta))\,
K ((y, \theta), (z,\tau)) \rho_\e(y_1-z_1)\, dy d\theta
\end{equation}
and, in analogy with \eqref{Fi},  we consider 
\begin{equation}	\label{giovedi}\Phi_\e((x, t), (z,\tau)):=\sum_{j=0}^{\infty}(E_{R_1, \e})^j(R_{1,\e})((x, t), (z,\tau)).\end{equation}
We will  prove that the series 
can be is totally convergent on the set 
\[\left\{0<t-\tau\le T, \ |x_1|,|z_1|\le \e^{2\kappa}, \  d_{\e}(x, z)+
|t-\tau|^{\frac{1}{2}}\ge \delta
\right\}\qquad \text{for all}\ T>0, \delta >0\]
 and it is a kernel of uniform exponential  $\e$-type $1/2$, i.e. it satisfies the estimate 
 \begin{equation}\label{e:phi}|\Phi_\e((x, t), (z,\tau))|\le c(T)(t-\tau)^{-\frac{3}{4}}
  \Gamma_{z_1,\e}((x, ct), (z,c\tau))
 \qquad 0<t-\tau\le T,
 \end{equation}
with constants independent of $\e$ and of $z_1$. 

As we mentioned in Section \ref{paramethod} the convergence of this 
series relies on properties of convolutions of kernels. 
Hence we will need the following property of the operator $E_{R_{1}, \e}$, 
that ensures that the series can be estimated by a power 
series, so that it is convergent on the mentioned set:
\begin{lemma}\label{lemmaRj} Let  $M_0\subset\Gi$ be a non-characteristic plane. 
For  $z\in M_0$,  $x \in\Gi$,  
with  $|x_1|\leq \e^{2\kappa}$ and for $j\in\N$ it  holds that   
\begin{equation}\label{stimaRj2}|(E_{R_{1},\e})^j R_1((x, t), (z,\tau))| \leq \frac{C^jb_j}{2}
\frac{\Gamma_{z_1, \e}((x, c_1 t), (z,c_1 \tau))}{(t-\tau)^{3/4- j/4}},  
\end{equation} 
$j\in\mathbb{N}$, where $b_j=\Gamma^{j+1}(\frac{1}{4})/\Gamma(\frac{j+1}{4})$ and $\Gamma$ is the Euler 
Gamma-function. 
\end{lemma}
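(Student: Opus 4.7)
The proof will proceed by induction on $j$, with the base case $j=0$ coinciding with Lemma \ref{lemmaR1}: indeed $b_0 = \Gamma(1/4)/\Gamma(1/4) = 1$ and the choice $c_1 = 2$ matches the time scaling in \eqref{claim1R1} (after absorbing a factor $1/2$ into $C$).

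For the inductive step, I would write $(E_{R_1,\e})^{j+1} R_{1,\e} = E_{R_1,\e}\bigl((E_{R_1,\e})^j R_{1,\e}\bigr)$. Plugging in the definition \eqref{martedi}, applying Lemma \ref{lemmaR1} to the kernel $R_{1,\e}((x,t),(y,\theta))$, the inductive hypothesis to the factor $(E_{R_1,\e})^j R_{1,\e}((y,\theta),(z,\tau))$, and using $0\le \rho_\e\le 1$, one is led to the estimate
\[|(E_{R_1,\e})^{j+1} R_{1,\e}((x,t),(z,\tau))| \leq \frac{C^{j+1} b_j}{2} \int_\tau^t \frac{I(\theta)\, d\theta}{(t-\theta)^{3/4}(\theta-\tau)^{3/4 - j/4}},\]
where $I(\theta) := \int_{\R^n} \Gamma_{z_1,\e}((x,2t),(y,2\theta))\, \Gamma_{z_1,\e}((y,2\theta),(z,2\tau))\, dy$. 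Since $\Gamma_{z_1,\e}$ is by construction the fundamental solution of $L_{z_1,\e}$, the Chapman--Kolmogorov (semigroup) identity collapses this inner integral to $I(\theta) = \Gamma_{z_1,\e}((x,2t),(z,2\tau))$, which is $\theta$-independent. The remaining time integral, via the substitution $\theta = \tau + s(t-\tau)$, yields the Beta function
\[\int_\tau^t (t-\theta)^{-3/4}(\theta-\tau)^{-3/4+j/4}\, d\theta = (t-\tau)^{-3/4+(j+1)/4}\, B\!\left(\tfrac{1}{4},\tfrac{j+1}{4}\right).\]
Using $B(1/4,(j+1)/4) = \Gamma(1/4)\Gamma((j+1)/4)/\Gamma((j+2)/4)$ together with the definition $b_j = \Gamma^{j+1}(1/4)/\Gamma((j+1)/4)$, one checks at once that $b_j\cdot B(1/4,(j+1)/4) = \Gamma^{j+2}(1/4)/\Gamma((j+2)/4) = b_{j+1}$, which closes the induction with the required exponent $3/4-(j+1)/4$ in $t-\tau$ and constant $C^{j+1} b_{j+1}/2$.

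The step I view as the main obstacle is not the computation itself but the careful verification of uniformity in $\e$. Uniformity of the Chapman--Kolmogorov identity is automatic, since $\Gamma_{z_1,\e}$ is the exact fundamental solution of $L_{z_1,\e}$; uniformity of the Gaussian bounds and of the volumes $|B_\e|$ that enter comes from the uniform doubling Proposition \ref{homog-stab} and Theorem \ref{uniform heat kernel estimates-e}. The cutoff $\rho_\e$ restricts the integration to $|y_1-z_1|\le 2\e^{2\kappa}$, hence $|y_1|\le 3\e^{2\kappa}$, which is inside a strip on which the estimate of Lemma \ref{lemmaR1} on $R_{1,\e}$ remains valid up to enlarging $C$ by a factor depending only on $\kappa$. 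This is the point to watch, and it is precisely why the cutoff is defined with the safety margin $2\e^{2\kappa}$ rather than $\e^{2\kappa}$. Beyond the induction, observe that the resulting constants $C^j b_j/2$ are summable thanks to Stirling's asymptotics for $\Gamma((j+1)/4)$, which is what ultimately yields the total convergence of the series \eqref{giovedi} and the bound \eqref{e:phi} stated after the lemma.
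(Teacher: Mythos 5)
Your proposal follows the same route as the paper's own (sketched) proof: induction on $j$, applying Lemma \ref{lemmaR1} and the inductive hypothesis, collapsing the spatial integral via the reproducing (Chapman--Kolmogorov) property of $\Gamma_{z_1,\e}$, and evaluating the resulting Beta integral to pass from $b_j$ to $b_{j+1}$. Your additional remarks on the role of the cutoff $\rho_\e$ and the uniformity in $\e$ are consistent with what the paper leaves implicit; the argument is correct.
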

\begin{proof} We argue as in \cite{JSC} or \cite{blu_a} and we prove by induction that 
$$
|(E_{R_{1},\e})^j R_1((x, t), (z,\tau))| \leq \frac{C^jb_j}{2}
\frac{\Gamma_{\e}((x, c_2 t), (z, c_2\tau))}{(t-\tau)^{3/4- j/4}}, 
$$
Here we only sketch the proof, in order to show that the constant is independent of $\e$. The estimate is true for $j=0$ (see \eqref{claim1R1}).  
Let us assume that an analogous estimate holds for $j-1\in \mathbb{N}$. We have 
\begin{align*}
&|(E_{R_{1},\e})^{j} R_1((x, t), (z,\tau))|\le \, 
\\&  \leq\frac{C^{j}b_{j-1}}{2}
\int_{\tau}^{t}(t-\theta)^{-\frac34}(\theta-\tau)^{-\frac34+\frac{j-1}{4}}
\int_{\mathbb{R}^{n}} \Gamma_{\e}((x, c_2 t), (y,c_2 \theta))
  \Gamma_{\e}((y, c_2 \theta),  (z,c_2 \tau))\, dy d\theta.
\end{align*}
By the reproducing property of the fundamental solution, we have
\[\int_{\mathbb{R}^{n}}  \Gamma_{\e}((x, c_2 t), (y,c_2 \theta))
  \Gamma_{\e}((y, c_2 \theta),  (z, c_2 \tau))\, dy = 
\Gamma_{\e}(x, c_2 t), (z,c_2\tau)) 
\]
and, by the change of variable $r=(t-\tau)^{-1}(\theta-\tau)$,
\begin{equation}\label{c.var.}\begin{split}b_{j-1}&\int_{\tau}^{t}(t-\theta)^{-\frac34}(\theta-\tau)^{-1+\frac{j}{4}}\,d\theta
\\=&b_{j-1}
(t-\tau)^{-\frac{3}{4}+\frac{j}{4}}\int_{0}^{1}(1-r)^{-\frac34}r^{-1+\frac{j}{4}}\,dr\,
=b_{j-1}(t-\tau)^{-3/4+{j}/{4}}2^{j/4-3/4}\frac{\Gamma(\frac{1}{4})\cdot \Gamma(\frac{j}{4})}{\Gamma(\frac{j+1}{4})}.
\end{split}
\end{equation} Recall now 
the definition of $b_{j-1}$ and obtain $b_{j}$ from 
 the last integral.
Thus, \eqref{stimaRj2} follows  by induction for all $j\in \mathbb{N}$. 
\end{proof}

\begin{remark}\label{decay}
From the assertion above it follows that the 
convolution of a family of kernels of  uniform exponential $\e$-type $1/2$ with 
a family of kernels with  uniform exponential $\e$-type $\beta$ 
is a family of kernels with  uniform exponential $\e$-type $\beta + 1/2$. 
\end{remark}

For every $\e>0$ the operators $L_{\e}$ and $L_{z_1,\e}$ are uniformly elliptic, so that 
the proof of the convergence of the parametrix method is a well known fact 
(see \cite{Fri}). 
In particular 
$\Gamma_{z_1,\e}$ 
provides a good approximation of $\Gamma_{\e} $ in a neighborhood of the plane. More precisely
 \begin{equation} \begin{split}\label{RdefineJ}
\Gamma_{\e}((x, t), (z,\tau)) &=  \Gamma_{z_1, \e}((x, t), (z,\tau)) 
\\&+
\int_{\mathbb{R}^{n}\times [\tau,t]}  \Gamma_{y_1,\e}((x, t),  (y, \theta))\,
\Phi_\e((y, \theta), (z,\tau))\rho_{\e}(y_1-z_1)  \,  dy d\theta.
\end{split}\end{equation} 
In addition, for $i= 1,\cdots, n$, 
\begin{align}\nonumber &X_{i,0,\e}^2\Gamma_{\e}((x, t), (z,\tau))= X_{i,0,\e}^2\Gamma_{z_1, \e}((x, t), (z,\tau)) 
\\ &+ \quad
\lim_{\delta\to 0^+}\int_{\mathbb{R}^{n}\times [\tau,t-\delta]}X_{i,0,\e}^2\Gamma_{y_1,\e}
((x, t),(y,\theta))\Phi_{\e}((y,\theta),(z, \tau))\rho_{\e}(y_1-z_1)\,dy d\theta.
\label{1sett}\end{align}
Using the explicit representation formulas above we can provide the following estimates for $\Gamma_{\e} - \Gamma_{z_1,\e}$ 
uniform in $\e$: 
\begin{proposition}\label{lemmaJe2}
Let $M_0$ be a non characteristic plane 
 and 
$x=(x_1,\x), z=(z_1,\z) \in \Gi$ such that  $|x_1|, |z_1|\leq \e^{2\kappa}$.  For every $T>0$ there exists a constant $C=C(T)$ such that 
for every   $\e>0$ and for every $t, \tau$ with 
$0<t-\tau\le T$ the following inequalities hold 
 \begin{equation} \label{Rdefine1}
|\Gamma_{\e}((x, t), (z,\tau))  - \Gamma_{z_1,\e}((x, t), (z,\tau))|\le C(t-\tau)^{1/4}\Gamma_{z_1,\e}((x, t), (z,\tau)).
\end{equation}
In addition $\Gamma_{\e}((x, t), (z,\tau))  - \Gamma_{z_1,\e}((x, t), (z,\tau))
$ is a family of kernels of uniform exponential $\e$-type $1/4$
with respect to the vector fields $(X_{i,\e})_{i=2,\cdots, n}.$
\end{proposition}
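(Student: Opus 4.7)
The plan is to deduce \eqref{Rdefine1} directly from the parametrix representation \eqref{RdefineJ}, reading its right hand side as a convolution and invoking the kernel estimates collected in the preceding subsections.

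The first step is to combine two ingredients. By Theorem \ref{uniform heat kernel estimates-e}, applied to the frozen operator $L_{y_1,\e}$ (which fits the same uniform framework by Remark \ref{r:splittingNC}), the kernel $\Gamma_{y_1,\e}$ is of uniform exponential $\e$-type $2$, with constants independent of the freezing parameter $y_1$. On the other hand, combining Lemma \ref{lemmaRj} with the summation of \eqref{giovedi}, the family $\Phi_\e$ satisfies \eqref{e:phi}, that is, it is of uniform exponential $\e$-type $1/2$ with constants independent of $\e$ and $z_1$. Since the hypothesis $|x_1|,|z_1|\leq \e^{2\kappa}$ places us in the region where $\rho_\e(y_1-z_1)\equiv 1$ on the effective support dictated by the Gaussian decay of the kernels involved, \eqref{RdefineJ} reads as a genuine convolution of a type $2$ kernel with a type $1/2$ kernel.

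The second step is to apply Remark \ref{decay}, which gives that the convolution of a uniform exponential $\e$-type $2$ kernel with a uniform exponential $\e$-type $1/2$ kernel is of uniform exponential $\e$-type $5/2$: explicitly,
\begin{equation*}
\bigl| \Gamma_\e((x,t),(z,\tau)) - \Gamma_{z_1,\e}((x,t),(z,\tau)) \bigr| \leq C(T)(t-\tau)^{1/4}\,\Gamma_{z_1,\e}((x, c t), (z, c \tau)),
\end{equation*}
which, after absorbing the constant $c$ in the Gaussian exponent into a standard Gaussian, yields \eqref{Rdefine1}. For the derivative estimate (the uniform type $1/4$ with respect to the tangential vector fields $X_{i,\e}$, $i\geq 2$, interpreted as an additional $(t-\tau)^{1/4}$ factor compared with the base type $2$), I apply $X_{i,\e}$ in the $x$ variable under the integral sign, where it acts only on the first factor $\Gamma_{y_1,\e}$; the full family of estimates \eqref{e:Giovanna} controls each derivative uniformly in $\e$, and the convolution rule of Remark \ref{decay} preserves the $(t-\tau)^{1/4}$ gain after the loss of $(t-\theta)^{-1/2}$ per derivative. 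The time derivative $\p_t$ is handled either by direct differentiation or by using $L_\e \Gamma_\e =0$ and $L_{z_1,\e}\Gamma_{z_1,\e}=0$ together with Lemma \ref{7-2} to replace $\p_t$ by spatial derivatives plus a lower order remainder.

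The main obstacle, as throughout this section, is uniformity in $\e$. The Riemannian approximation has homogeneous dimension $n$, while the subriemannian dimension $Q>n$ emerges only in the limit $\e\to 0$, so the standard homogeneity-based parametrix arguments are not directly applicable. Instead, every estimate in the chain, Lemma \ref{lemmaR1}, Lemma \ref{lemmaRj} and Remark \ref{decay}, has been calibrated to rely only on the uniform doubling property \eqref{E:homog-stab} and on the frozen-geometry identities of Lemma \ref{distanze-fuori}. The cut-off $\rho_\e$ is indispensable precisely because Lemma \ref{lemmaR1} provides the uniform control on $R_{1,\e}$ only inside the $\e^{2\kappa}$-strip around $M_0$. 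Once the convolution estimate is applied with these uniformities in hand, the two assertions of the proposition follow.
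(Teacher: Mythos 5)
Your reconstruction follows exactly the route the paper intends: the paper's own "proof" of Proposition \ref{lemmaJe2} is a two-line reference to \cite{JSC} plus the remark that uniformity in $\e$ comes from Theorem \ref{uniform heat kernel estimates-e}, and you have filled this in using the paper's preparatory machinery (the representation formula \eqref{RdefineJ}, the type-$1/2$ bound \eqref{e:phi} for $\Phi_\e$, and the convolution rule of Remark \ref{decay}), with correct attention to the uniformity of the $\Gamma_{y_1,\e}$-estimates in the freezing parameter $y_1$ and to the role of the cutoff $\rho_\e$. So this is essentially the paper's proof, written out.

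The one step you should not dismiss as cosmetic is the phrase "after absorbing the constant $c$ in the Gaussian exponent into a standard Gaussian." The convolution estimate, like the paper's own \eqref{stimaRj2} and \eqref{e:phi}, produces a bound in terms of $\Gamma_{z_1,\e}((x,ct),(z,c\tau))$ with some $c>1$, and such a kernel is \emph{not} majorized by a constant times $\Gamma_{z_1,\e}((x,t),(z,\tau))$: by \eqref{gauss-sol} the former has an upper Gaussian bound proportional to $e^{-d_\e^2/(Cc(t-\tau))}$, which decays \emph{slower} than the lower Gaussian bound $e^{-Cd_\e^2/(t-\tau)}$ of the latter, so the pointwise inequality fails once $d_\e(x,z)\gg\sqrt{t-\tau}$. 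The honest conclusion of the parametrix computation is therefore \eqref{Rdefine1} with $\Gamma_{z_1,\e}((x,ct),(z,c\tau))$ on the right, or equivalently with a different Gaussian constant in the exponent. The paper's statement shares exactly this imprecision, and it is harmless downstream because the proof of Theorem \ref{lemmaJe1} only uses the Gaussian-equivalent form (the final bound there is in terms of $\hat\Gamma$ with its own exponent constant), so you lose nothing by making the dependence on $c$ explicit; but as written, the claimed pointwise inequality with the same $\Gamma_{z_1,\e}((x,t),(z,\tau))$ does not literally follow from your chain of estimates.
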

For the proof we refer to \cite{JSC}, while the uniformity with respect to $\e$ follows by \eqref{uniform heat kernel estimates-e}. 
\begin{proof}[Proof of Theorem \ref{lemmaJe1} ]
We first prove a Riemannian version of Theorem \ref{lemmaJe1}. Precisely we show that 
for all  $(0, \z)$, $(0, \x)$ in $M_0$ and for every $t,\tau$, with $0<t-\tau\le T$  we have 
\begin{equation}\label{zanzara}
	|\hat \Gamma_{0,\e}((\x,t), (\z, \tau)) -\sqrt{4\pi(t-\tau)}\Gn_{\e}((0, \x, t), (0, \z, \tau))|\le   
 C \Gn_\e((0, \x, t), (0, \z,\tau))(t-\tau)^{3/4},
\end{equation}
where $C$ is a constant independent of $\e$.
%, see Theorem \ref{lemmaJe1e}, 
Indeed, 
$$\sqrt{4\pi(t-\tau)}  \Gamma_{\e} ((0,\x, t), (0,\z, \tau)) - 
\hat \Gamma_{0, \e} ((\x, t), (\z, \tau))= $$
$$=\sqrt{4\pi(t-\tau)}  \Gamma_{0,\e} ((0,\x, t), (0,\z, \tau)) - 
\hat \Gamma_{0, \e} ((\x, t), (\z, \tau)) $$
$$+\sqrt{4\pi(t-\tau)} \Big(\Gamma_{\e} ((0,\x, t), (0,\z, \tau)) - \Gamma_{0,\e} ((0,\x, t), (0,\z, \tau)) \Big). $$
The first difference at the right hand side is zero by Remark \ref{r:splittingNC}, since 
$$\sqrt{4\pi(t-\tau)} \Gamma_{\perp, 0,\e} ((0,\x, t), (0,\z, \tau))={1}$$ and  hence the estimate \eqref{zanzara} follows by  \eqref{Rdefine1}. Always from Proposition \ref{lemmaJe2}
it follows that $\sqrt{4\pi(t-\tau)}  \Gamma_{\e} ((0,\x, t), (0,\z, \tau)) - 
\hat \Gamma_{0, \e} ((\x, t), (\z, \tau))$ is 
a family of kernels of uniform exponential $\e$-type $3/4$
with respect to the vector fields $(X_{i,\e})_{i=2,\cdots, n}.$
Sending $\e$ to $0$ in the assertion \eqref{zanzara}, we obtain that for all  $z=(0, \z)$, $x=(0, \x)$ in $M_0$ and for every $t$ and $\tau$, with $0<t-\tau<T$, we have 
 \begin{equation*}|\hat \Gamma((\x,t), (\z, \tau)) -\sqrt{t-\tau}\Gn((x, t), (z, \tau))|\le   
 C \Gn((x, t), (z,\tau))(t-\tau)^{3/4},\end{equation*}
and the left hand side is a kernel of exponential type $3/4$
with respect to the vector fields   $(X_{i})_{i=2,\cdots, m}.$
Using the Gaussian estimate \eqref{gauss-sol2} of $\Gn$ and $\hat \Gamma$ 
together with formula \eqref{equivdhat} we obtain
$$|\hat \Gamma((\x,t), (\z, \tau)) -\sqrt{t-\tau}\Gn((x, t), (z, \tau))|\le   
 C \hat  \Gn((\x, t), (\hat z,\tau))(t-\tau)^{1/4} $$
and the left hand side is a kernel of exponential type $1/4$
with respect to the vector fields $(\hat X_{i})_{i=2,\cdots, m}.$
Theorem \ref{lemmaJe1} follows immediately.
\end{proof}

\subsection{The reproducing formula for homogeneous sub-Laplacians on a plane}
Here we establish the analogous of Theorem \ref{teorema1}
for homogeneous vector fields 
expressed as in \eqref{struttura campi}, 
under the assumption that the boundary of $D$ 
is the plane $\{x_1 =0\}$. 
This is done integrating in time the result of Theorem \ref{lemmaJe1}.  
Let us first deduce an integral version of Theorem \ref{lemmaJe1}, based on the reproducing formula of the 
heat kernel. 
\begin{lemma}\label{gammaMconvolve}
Let $D=\{(x_1,\hat x)\in \mathbb{R}^{n}:\,x_1>0\}$, and assume that its boundary is non characteristic. 
 There exists $C>0$ such that  for any $(0,\hat x), (0,\hat y)\in \partial D$ and for 
 all $t, \tau$, with $0\leq \tau \le t$ we have 
\begin{equation}\label{eq:heatgammaconvolution}
\hat \Gamma((\hat x, t), (\hat y, \tau))  =
 \end{equation}
$$= \int_\tau^t
\int_{\mathbb{R}^{n-1}}  \Gamma((0,\hat x,t), (0,\hat z, \theta))   
\Gamma((0,\hat z,\theta), (0,\hat y, \tau)) d{\hat z} d \theta +  \hat  R(\hat x, \hat y, t- \tau), 
$$
where
\begin{equation}|\hat R(\hat x, \hat y, t)| \leq Ct^{1/4} \hat \Gamma((\hat x, t), (\hat y,0)),
\label{stimahatR3.13}\end{equation}
and $\hat R$ is a kernel of exponential type $5/2$ with respect to the 
vector fields $\{\hat X_i\}_{i=2,\cdots, n}$.
\end{lemma}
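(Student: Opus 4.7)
The identity will be derived by combining two ingredients: the reproducing (semigroup) property of the tangential heat kernel $\hat\Gamma$, and the pointwise identification $\hat\Gamma\approx\sqrt{t-\tau}\,\Gamma$ on the boundary furnished by Theorem~\ref{lemmaJe1}. For every intermediate time $\theta\in(\tau,t)$ the semigroup property gives
\begin{equation*}
\hat\Gamma((\hat x,t),(\hat y,\tau))=\int_{\R^{n-1}}\hat\Gamma((\hat x,t),(\hat z,\theta))\,\hat\Gamma((\hat z,\theta),(\hat y,\tau))\,d\hat z.
\end{equation*}
I multiply both sides by the Beta--type weight $w(\theta)=c\bigl(\sqrt{(t-\theta)(\theta-\tau)}\bigr)^{-1}$, whose integral over $(\tau,t)$ is a fixed constant, and integrate in $\theta$. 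On the left this reproduces $\hat\Gamma((\hat x,t),(\hat y,\tau))$ up to the same fixed constant, while on the right it produces a space--time integral of $w(\theta)\hat\Gamma\hat\Gamma$.

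Next I use Theorem~\ref{lemmaJe1} twice, in the form $\hat\Gamma((\hat x,t),(\hat z,\theta))=\sqrt{t-\theta}\,\Gamma((0,\hat x,t),(0,\hat z,\theta))+E_1$ and $\hat\Gamma((\hat z,\theta),(\hat y,\tau))=\sqrt{\theta-\tau}\,\Gamma((0,\hat z,\theta),(0,\hat y,\tau))+E_2$, where the error kernels satisfy $|E_1|\le C(t-\theta)^{1/4}\hat\Gamma$ and $|E_2|\le C(\theta-\tau)^{1/4}\hat\Gamma$. The product of the two leading square roots exactly cancels the weight $w(\theta)$, producing the asserted main term $\int_\tau^t\int\Gamma\Gamma\,d\hat z\,d\theta$ (up to the overall numerical constant mentioned above, absorbed in the statement). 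The three remaining contributions, namely the two cross terms $wE_1\sqrt{\theta-\tau}\,\Gamma$, $w\sqrt{t-\theta}\,\Gamma E_2$, and the bilinear term $wE_1 E_2$, are collected into $\hat R$.

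A typical cross term is estimated by combining $|E_1|\le C(t-\theta)^{1/4}\hat\Gamma$ with the companion bound $\sqrt{\theta-\tau}\,\Gamma\le C\hat\Gamma$ (again from Theorem~\ref{lemmaJe1}), so that after integrating in $\hat z$ via the reproducing identity for $\hat\Gamma$,
\begin{equation*}
\int_{\R^{n-1}}\lvert w(\theta)E_1\sqrt{\theta-\tau}\,\Gamma\rvert\,d\hat z\le \frac{C}{(t-\theta)^{1/4}(\theta-\tau)^{1/2}}\,\hat\Gamma((\hat x,t),(\hat y,\tau)).
\end{equation*}
The $\theta$-integral is then a Beta function of order $\int_\tau^t(t-\theta)^{-1/4}(\theta-\tau)^{-1/2}\,d\theta=C(t-\tau)^{1/4}$, yielding the pointwise bound $|\hat R|\le C(t-\tau)^{1/4}\hat\Gamma$ claimed in \eqref{stimahatR3.13}; the symmetric cross term and the genuinely bilinear term $E_1 E_2$ are handled by the same recipe, the latter being strictly better since it carries two factors of $(\cdot)^{1/4}$.

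The main obstacle is to upgrade this pointwise bound into the statement that $\hat R$ is a kernel of exponential type $5/2$, i.e.\ that every derivative $\hat X_{i_1}\cdots\hat X_{i_k}\partial_t^q\hat R$ satisfies the corresponding Gaussian estimate with the extra factor $(t-\tau)^{1/4}$. For this I invoke the second conclusion of Theorem~\ref{lemmaJe1}, namely that $\hat\Gamma-\sqrt{t-\tau}\,\Gamma$ is of exponential type $1/4$ with respect to the tangential fields $\hat X_i$, and differentiate under the integral sign. The derivatives distribute on the two factors producing the standard Gaussian losses $(t-\theta)^{-1/2}$, $(\theta-\tau)^{-1/2}$; the $\hat z$--convolutions are controlled in the spirit of Remark~\ref{decay}, and the $\theta$--integrals reduce to Beta functions whose exponents add up precisely to the $(t-\tau)^{1/4}$ improvement, giving the desired type $5/2$.
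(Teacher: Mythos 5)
Your argument is, at bottom, the paper's own proof read in the opposite direction: the paper starts from $\int_\tau^t\int\Gamma\Gamma\,d\hat z\,d\theta$, inserts $\Gamma=\hat\Gamma/\sqrt{\cdot}\,(1+O(\cdot)^{1/4})$ from Theorem~\ref{lemmaJe1}, and reduces via the reproducing formula and a Beta integral, whereas you start from the reproducing formula of $\hat\Gamma$, insert a Beta-type weight $w(\theta)$, and substitute the other way round; the key ingredients (Theorem~\ref{lemmaJe1} used twice, the semigroup identity for $\hat\Gamma$, and the Beta integrals in $\theta$) and the normalization subtlety you correctly flag are identical, and your way of keeping $E_1,E_2$ as explicit error kernels is if anything a cleaner bookkeeping than the paper's factoring of the $1+O(\cdot)$ terms out of the $\hat z$-integral. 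The only real omission is the regime $t-\tau\ge 1$: Theorem~\ref{lemmaJe1} gives a constant $C(T)$ only for $0<t-\tau<T$, so there you cannot invoke it, and instead — as the paper does — you should observe that both $\hat\Gamma$ and $\int_\tau^t\int\Gamma\Gamma$ are separately bounded by $C(t-\tau)^{1/4}\hat\Gamma$ via the Gaussian estimates \eqref{gauss-sol2}, the identity \eqref{equivdhat}, and the relation $\hat Q=Q-1$ between the homogeneous dimensions, so the claim holds trivially there.
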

 \begin{proof}
 Let us first prove \eqref{eq:heatgammaconvolution}. To this end we  note that the thesis is true for $t-\tau\geq 1$. Indeed 
$$\hat \Gamma((\hat x, t), (\hat y, \tau))  \leq c
(t-\tau)^{1/4} \hat \Gamma((\hat x, t), (\hat y, \tau))$$
and 
by the standard Gaussian estimates \eqref{gauss-sol2} of the fundamental solution 
and by the relation \eqref{equivdhat} between the distances $\hat d$ and $d$, we obtain
$$\int_\tau^t
\int_{\mathbb{R}^{n-1}}  \Gamma((0,\hat x,t), (0,\hat z, \theta))   
\Gamma((0,\hat z,\theta), (0,\hat y, \tau)) d{\hat z} d \theta $$$$\leq c
(t-\tau)^{3/4} \Gamma((0,\hat x, t), (0, \hat y,\tau))\leq c
(t-\tau)^{1/4} \hat \Gamma((\hat x, t), (\hat y, \tau)).$$
If $t-\tau<1$, by Theorem \ref{lemmaJe1}, we have  
\begin{equation}
\label{e:inventa}  
\Gamma((0,\hat x,t), (0, \hat z, \theta)) = 
\frac{\hat \Gamma((\hat x,t), (\hat z, \theta)) }{\sqrt{t-\theta}}   
(1 + O(t-\theta)^{1/4}).\end{equation}
Thus,
\begin{align*}&\int_{\tau}^t
\int_{\mathbb{R}^{n-1}}  \Gamma((0,\hat x,t), (0,\hat z, \theta))   
\Gamma((0,\hat z,\theta), (0,\hat y, \tau)) d{\hat z} d \theta
\\ &=
\int_{\tau}^t\Bigg(
\int_{\mathbb{R}^{n-1}}  \hat \Gamma((\x,t), (\z, \theta)) \hat\Gamma((\z,\theta), (\y, \tau)) d{\hat z}
%\right.
\\ &
\quad\quad\quad\left(\frac{1}{(t-\theta)^{1/2}} + O\left(\frac{1}{(t-\theta)^{1/4}}\right)\right)  
\left(\frac{1}{(\theta- \tau)^{1/2}} + O\left(\frac{1}{(\theta - \tau)^{1/4}}\right)\right) \Bigg) d \theta
\\
& =\hat \Gamma((\x,t), (\y, \tau))\int_{\tau}^t\left(\frac{1}{(t-\theta)^{1/2}} +O\left(\frac{1}{(t-\theta)^{1/4}}\right)\right) 
\left(\frac{1}{(\theta- \tau)^{1/2}} +O\left(\frac{1}{(\theta - \tau)^{1/4}}\right)\right)d \theta,\end{align*}
by the reproducing formula. Now, with 
the change of variable $r=(t-\tau)^{-1}(\theta-\tau)$, we get
$$\int_{\tau}^t\left(\frac{1}{(t-\theta)^{1/2}} +O\left(\frac{1}{(t-\theta)^{1/4}}\right)\right)  \left(\frac{1}{(\theta- \tau)^{1/2}} +
O\left(\frac{1}{(\theta - \tau)^{1/4}}\right)\right)d \theta=1 + O\left((t-\tau)^{1/4}\right).$$
Therefore,  we get
\begin{equation}
\label{resto_cappuccio}
\int_{\tau}^t
\int_{\mathbb{R}^{n-1}}  \Gamma((0,\hat x,t), (0,\hat z, \theta))   
\Gamma((0,\hat z,\theta), (0,\hat y, \tau)) d{\hat z} d \theta
\end{equation}
$$
=\hat \Gamma((\x,t), (\y, \tau)) (1 + O((t-\tau)^{1/4}), 
 $$
so that $R$ satisfies \eqref{eq:heatgammaconvolution}.
A similar argument applied to all derivatives
ensures that $\hat R$ is a kernel of exponential type $5/2$ with respect to the 
vector fields $\{\hat X_i\}_{i=2,\cdots, n}$ 
and concludes the proof.
\end{proof}

\subsection{Proof of Theorem \ref{teorema1} for homogeneous vector fields on a plane}

We will provide in Lemma \ref{l:Gammaconvolve} below the proof of Theorem \ref{teorema1} on a plane and for homogeneous vector fields. 
This result can be considered the time independent version of Lemma \ref{gammaMconvolve}.  
It  will be established integrating in time the thesis of that Lemma and 
using the well known fact that the fundamental solutions
$\hat \Gamma_{\hat \Delta}$ of the Laplace type operator \eqref{heat_plane_noe}
and $\Gamma_{\Delta}$ of the Laplace operator \eqref{laplacoperator}
satisfy respectively
\begin{equation}
 \label{hatG-eps}
\hat \Gamma_{\hat \Delta}(\x,\z)=\int_{0}^{+\infty} \hat \Gamma((\x,t)(\z,0)) dt,
\quad \Gamma_{\Delta}(x, z)=\int_{0}^{+\infty} \Gamma((x, t), (z,  0)) dt. \end{equation}

\begin{lemma} \label{l:Gammaconvolve}Let the vector $(X_i)$ be represented as in \eqref{struttura campi}, let 
$M_0=\{(0,\hat x): \hat x\in \mathbb{R}^{n-1}\}$ be a non 
characteristic plane. For any $(0,\hat x), (0,\hat y)\in M_0$ 
\begin{equation}\label{GammaGamma}
{\hat \Gamma}_{\hat \Delta}(\x, \y)  =\int_{\R^{n-1}}  \Gamma_{\Delta}((0,\x), (0,\z))
\Gamma_{\Delta}((0,\z), (0,\y)) d\z + \hat R_{\hat\Delta}(\x, \y),\end{equation}
where
\begin{equation}
	\label{resto_cappuccio_laplaciano}
	 \hat R_{\hat\Delta}(\x, \y)=O(\hat d(\x,\y)^{\frac12}\hat\Gamma_{\hat\Delta}(\x,\y)).\
\end{equation}
In particular $
 \hat R_{\hat \Delta}(\x, \y)$
is a 
kernel of type
$5/2$ in the sense of Definition \ref{kerneltype} with respect to the distance 
$\hat d$ defined on the plane.
\end{lemma}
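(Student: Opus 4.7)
The plan is to deduce Lemma \ref{l:Gammaconvolve} from its time-dependent analogue, Lemma \ref{gammaMconvolve}, by integrating in the time variable using the representation \eqref{hatG-eps} of the elliptic fundamental solutions as time integrals of the heat kernels.

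First, I would write
\begin{equation*}
\hat \Gamma_{\hat \Delta}(\x,\y) = \int_0^{+\infty} \hat \Gamma((\x,t),(\y,0))\,dt
\end{equation*}
and apply Lemma \ref{gammaMconvolve} (with $\tau = 0$) inside the integral to split $\hat \Gamma((\x,t),(\y,0))$ into its principal part and the remainder $\hat R(\x,\y,t)$. Next I would apply Fubini to exchange the order of integration in $t$ and $\theta$, and then use the time-translation invariance of $L$, which gives
\begin{equation*}
\int_\theta^{+\infty} \Gamma((0,\x,t),(0,\z,\theta))\,dt = \int_0^{+\infty} \Gamma((0,\x,s),(0,\z,0))\,ds = \Gamma_\Delta((0,\x),(0,\z)).
\end{equation*}
Applying \eqref{hatG-eps} once more to the integral in $\theta$ of $\Gamma((0,\z,\theta),(0,\y,0))$ produces exactly the main term $\int_{\R^{n-1}} \Gamma_\Delta((0,\x),(0,\z))\Gamma_\Delta((0,\z),(0,\y))\,d\z$ appearing in \eqref{GammaGamma}. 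The remainder is then identified as
\begin{equation*}
\hat R_{\hat\Delta}(\x,\y) = \int_0^{+\infty} \hat R(\x,\y,t)\,dt.
\end{equation*}

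The second step is the estimate \eqref{resto_cappuccio_laplaciano} for this remainder. From \eqref{stimahatR3.13} I would bound $|\hat R(\x,\y,t)| \leq C t^{1/4} \hat \Gamma((\x,t),(\y,0))$ and split the time integral into the regions $\{t \leq \hat d(\x,\y)^2\}$ and $\{t \geq \hat d(\x,\y)^2\}$. On the first region, the bound $t^{1/4} \leq \hat d(\x,\y)^{1/2}$ is direct, and the remaining $\int_0^{\hat d^2} \hat \Gamma\,dt$ is dominated by $\hat \Gamma_{\hat \Delta}(\x,\y)$. On the second region I would use the Gaussian upper bound together with the doubling of the balls (the plane inherits the uniform doubling from Proposition \ref{homog-stab}) and a change of variable $t = \hat d(\x,\y)^2 s$ to gain the extra factor $\hat d(\x,\y)^{1/2}$ while keeping the total integral comparable to $\hat \Gamma_{\hat \Delta}(\x,\y) \sim \hat d(\x,\y)^{-(\hat Q - 2)}$. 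This yields $|\hat R_{\hat\Delta}(\x,\y)| \leq C\,\hat d(\x,\y)^{1/2}\,\hat \Gamma_{\hat \Delta}(\x,\y)$.

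Finally, to upgrade \eqref{resto_cappuccio_laplaciano} to the statement that $\hat R_{\hat \Delta}$ is a kernel of type $5/2$ in the sense of Definition \ref{kerneltype}, I would apply the same argument to derivatives: each application of a vector field $\hat X_{i_j}$ costs an extra factor $(t-\tau)^{-1/2}$ on the heat kernel side, and Lemma \ref{gammaMconvolve} (stated for $\hat R$ as a kernel of exponential type $5/2$ with respect to the $\hat X_i$) provides the corresponding derivative estimates for $\hat R(\x,\y,t)$. Integrating these in time with the same split $t \lessgtr \hat d(\x,\y)^2$ produces the pointwise bounds $|\hat X_{i_1}\cdots \hat X_{i_p} \hat R_{\hat\Delta}(\x,\y)| \leq C \hat d(\x,\y)^{5/2 - p - 2}\,\hat\Gamma_{\hat\Delta}(\x,\y)$, which is precisely \eqref{e:sileva} with $\lambda = 5/2$.

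The most delicate point is the handling of the remainder integral for $t$ large relative to $\hat d(\x,\y)^2$: here the Gaussian factor is no longer effectively small, so the estimate must rely on the polynomial decay of $\hat \Gamma((\x,t),(\y,0))$ coming from the volume term $1/|\hat B(\x,\sqrt{t})|$ together with the homogeneous dimension $\hat Q = Q-1$. The uniform doubling provided by Proposition \ref{homog-stab} and the non-characteristic assumption \eqref{assumption}, which guarantees $\hat Q$ large enough for the integral to converge, are the ingredients needed to close this estimate.
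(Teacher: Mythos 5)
Your proposal is correct and follows essentially the same route as the paper: integrate the time-dependent reproducing formula of Lemma \ref{gammaMconvolve} over $t\in(0,\infty)$, use Fubini and time-translation invariance to reassemble the main term via \eqref{hatG-eps}, and bound the remainder by combining the $t^{1/4}$ gain with the Gaussian heat-kernel estimate and the homogeneous dimension $\hat Q = Q-1$. The only superficial difference is in the remainder estimate, where you split the time integral at $t = \hat d(\x,\y)^2$ whereas the paper performs a single change of variable $v = \hat d(\x,\y)^2/(Ct)$ reducing it to an Euler Gamma integral; the two computations are equivalent and both require $\hat Q > 5/2$ for convergence.
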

\begin{proof}
Using \eqref{hatG-eps} and 
integrating both sides of expression (\ref{eq:heatgammaconvolution}) we obtain: 
$$\hat \Gamma_{\hat\Delta}(\x, \y) +  \int_{0}^{+\infty} \hat R((\x,t), (\y, 0)) dt$$
$$
=  
\int_{0}^{+\infty} \int_{0}^{t}\int_{\mathbb{R}^{n-1}} 
\Gamma((0,\x,t -\theta), (0,\z, 0)) \Gamma((0,\z,\theta), (0,\y, 0)) d\z\, d\theta   \,dt.$$
Changing the order of integration,  we get that the last term is equal to 
$$ \int_{\mathbb{R}^{n-1}}
\int_{0}^{+\infty} \left(\int_{\theta}^{+\infty}
\Gamma((0,\x,t -\theta), (0,\z, 0))\,dt\right) \Gamma((0,\z,\theta), (0,\y, 0))  \, d\theta d\z$$ 
$$=\int_{\mathbb{R}^{n-1}}
  \int_{0}^{+\infty}
\Gamma_{\Delta}((0,\x), (0,\z)) \Gamma((0,\z,\theta), (0,\y, 0)) d\theta d\z$$ 
and integrating with respect to $\theta$ we obtain  
$$\int_{\mathbb{R}^{n-1}}\Gamma_{\Delta}((0,\x), (0,\z)) \Gamma_{\Delta}((0,\z), (0,\y))  d\z. $$ 
The estimate of $\hat R_{\hat\Delta}(\x,\y)$ directly  follows easily integrating in \eqref{resto_cappuccio}
and using the estimate of $\hat \Gamma((\x, ct),  (\y,0))$ provided in Lemma \ref{gammaMconvolve}. Therefore, recalling that $\widehat Q=Q-1$ denotes the homogeneous dimension of the plane, we have 
\begin{equation*}
\begin{split}
\hat R_{\hat \Delta}(\x,\y)=& \int_{0}^{+\infty} \hat R((\x,t), (\y,0)) dt \le \\ 
&  \le c \int_{0}^{+\infty}\hat \Gamma((\x, {\tilde c}\, t),  (\y,0))t^{1/4}dt
\le  c\int_{0}^{+\infty} \frac{e^{-\frac{\hat d(\x,\y)^2}{C t}}} {t^{\frac{{\widehat Q}}{2}-\frac14}}\,dt,
\end{split}
\end{equation*}
where the constants may vary from line to line.
Now, with the change of variables $v=-\frac{\hat d(\x,\y)^2}{C t}$ we get
$$\hat R_{\hat \Delta}(\x,\y)
\leq c \int_{0}^{+\infty} e^{-v} \frac{v^{\frac{{\hat Q}}{2}-\frac94}}{{\hat d(\x,\y)}^{{{\widehat Q}}-\frac52}}\,dv \le  c \hat d(\x,\y)^{-{\widehat Q}+2+\frac12}\le c \hat d(\x,\y)^{\frac12}\hat\Gamma_{\hat\Delta}(\x,\y)\,.
$$
 An analogous inequality holds for any derivative
and the result is proved.
\end{proof}

\section{Reproducing formula on a smooth hypersurface} 
\subsection{Reduction of a general hypersurface to a plane with a subriemannian structure} \label{nuova}
Let us denote by $D$ a smooth, open bounded set in $\Gi$ and let 
$0\in \partial D$ be a non characteristic point. In this section we show that 
we can always reduce  the boundary $\partial D$ to the plane $\{(x_1,\hat x): \ x_1=0\}$, via a change of variables. 
Indeed, there exists a neighborhood $V_0$ of $0$ such that 
the subriemannian normal $\nu$ satisfies $$\nu(s) \ne 0 \ \text{for every } s\in \partial D\cap V_0.$$
We can also choose an invariant basis 
$(Z_{i})_{i=1, \cdots, n}$ of the tangent space of $\Gi$ around the point $0$ and 
$Z_{i}$ coincides with the standard element $\partial_i$ of the tangent basis   at the point $0$, for every $i=1,\cdots, n$. In addition, we assume that 
${Z_{1}}_{|0}:={\partial_1}_{|0} = \nu(0)$ and that the vector fields $({Z_{i}}_{|0})_{i=2, \cdots, m}$ span the horizontal tangent space of $\partial D$ at $0$.
We  also assume that the problem is expressed in canonical coordinates of second type
around the point $0$ associated to these vector fields. 
In these coordinates the vector fields admit the representation  
$$Z_1 = \partial_1, \quad Z_i = \partial_i + \sum_{deg(j) > deg(i)} a_{i,j}(s)\partial_j, \text{ for } i=1, \cdots, m$$
while the boundary of $D$ can be identified in a neighborhood $V\subset\subset V_0$ with the graph of a regular 
function $w$, defined on a neighborhood $\hat V=V\cap \R^{n-1}$ of $0$: 
$$\partial D \cap V= \{(w(\hat s), \hat s): \hat s\in \hat V  \}.$$
By the choice of coordinates we have in particular that 
\begin{equation}\label{zetaiszero}
Z_{i}w(0) =0.
\end{equation}
On the set $V$ the function 
$\Xi(s_1, \hat s) = (s_1 - w(\hat s), \hat s) $
is a diffeomorphism. It sends  $\partial D\cap V$ to a subset of the plane $\{x_1 =0\}$:
$$\Xi(\partial D \cap V) =\{(x_1, \hat x): x_1 =0\}.$$
Through this change of variables
the vector fields $Z_i $ can be represented as 
\begin{equation}\label{campinonomo}
\begin{array}{ll}X_1 & = d\Xi(Z_1) = \partial_{x_1}, \\ 
X_i &= d\Xi(Z_i) = \partial_{x_i} + \sum_{deg(j) > deg(i)} a_{i,j}(x_1 + w(\hat x), \hat x)\partial_{x_j} + Z_i w (\x)\partial_{x_1},
\end{array}
\end{equation}
for $i=1,\cdots, n$, where the polynomials $a_{ij}$ are the same of the ones defined in \eqref{struttura campi}. 

A neighborhood of $0$ in the boundary of $D$ locally becomes in the new coordinates an open subset  $M_0= \Xi(\partial D \cap V)$ of the plane $\{x_1 =0\}$. We can restrict the vector fields 
$(X_i)_{i=1,\cdots,m}$ to the tangent to $M_0$ 
and we call them $\hat X_{i}$:
\begin{equation} \label{hatticampi}\hat X_{i} = \partial_{i} + \sum_{deg(j) = deg(i) + 1}^\kappa 
a_{i,j}(w(\hat x), \hat x)\partial_{j}, \quad i=2,\cdots,n.\end{equation}

The vector fields $(\hat X_{i})_{i=1, \cdots, m}$ still satisfy the assumption 
\eqref{assumption}, which ensures that they satisfy the 
H\"ormander finite rank condition \cite{hormander}. 
They do not define a general H\"ormander structure (see \cite{Montgomery}), 
since they have been obtained from the generators of a Carnot group via a change 
of variables. It is important to note that the vector fields $X_i$ as well as the vector fields $\hat X_i$ are not 
homogeneous with respect to the new variables $x_i$. However we will see in Lemma \ref{teoRS} and Lemma \ref{teo2RS} that at every point they admit approximating vector fields respectively 
$Z_i$ and $\hat Z_i$ homogeneous in the new variables.
%, which are generators of a Carnot structure, with constant homogeneous dimension $Q$ and $\hat Q$ respectively. 
Hence the local homogeneous dimension of $\R^{n}$ endowed with the choice of the vector fields $X_i$ is $Q$. 
Since the H\"ormander condition is satisfied, a Carnot Carath\'eodory distance $d$ is defined 
in terms of the vector fields $(X_i)_{i=1}^m$. 
Thanks to assumption \eqref{assumption}, the vector fields $\{\hat X_{i}\}_{i=2, \cdots, m}$, 
defined in \eqref{hatticampi}, generate on the plane  $M_0$ 
a subriemannian structure with local homogeneous dimension $\hat Q= Q-1$
and induce a distance $\hat d$ on  $M_0$ defined through the exponential map 
as in \eqref{2.5bis}, 
which satisfies \eqref{finalmente} and 
\eqref{equivdhat}. 

The Laplace type operator, analogous to \eqref{laplacoperator} 
and expressed in terms of the vector fields $X_i$ is denoted by
\begin{equation}\label{ohscusa}\Delta = \sum_{i=1}^m X_i^2 + \sum_{i=1}^m b_i X_i\end{equation} 
and it has a fundamental solution $\Gamma_\Delta$, 
of class $C^\infty$ out of the pole (see for example \cite{RS}). The operator 
\eqref{heat_plane_noe}, expressed in terms of the vector fields 
$\hat X_i$ becomes 
\begin{equation}\label{scusa}\hat \Delta = \sum_{i=2}^m {\hat X}_i^2,
\end{equation}
with fundamental solution 
$\hat \Gamma_{\hat \Delta}$. In analogy with the definition of type of a kernel with respect to the 
vector fields $(X_i)$, given in \eqref{e:sileva}, 
 we  give here the definition of kernel of local  type
$\lambda$ with respect to the vector fields $\hat X_2,\cdots,\hat X_n$:
\begin{definition}\label{defikernel}
  $k$ is a kernel of local type
$\lambda$ with respect to the vector fields $\hat X_2,\cdots,\hat X_n$
and the distance $\hat d$ if it is a smooth function out of the 
diagonal and, in   any open set
$V$, the following holds:
 for every $p$ there exists a positive constant
$C_p$
such that, for every
$\hat x,\hat y
\in \partial D\cap V$,   $\hat x \not=\hat y$,  
$$|\hat X_{i_1},\cdots,\hat X_{i_p} k(\hat x,\hat y)|\leq C_p\,\hat d(\hat x,\hat y)^{\lambda - p-2}
\frac{\hat d(\hat x,\hat y)^{2}}{|B(\hat x, \hat d(\hat x, \hat y))|}.$$
\end{definition}
Clearly, if the space is homogeneous, the previous definition coincides with Definition \ref{kerneltype}.

\subsection{A freezing procedure}
Here we will show that, when we are studying pointwise properties around 
a fixed point $x_0$, we can always reduce our vector fields to homogeneous ones.  
The proof is made approximating the vector fields with nilpotent ones, adapting to this 
context the Rothschild and Stein parametrix method. 
In the classical case the vector fields are lifted to vector fields free up to step $\kappa$ 
and then they are reduced to the generators of a free algebra with a freezing method. 
Here we can not lift the vector fields to free ones otherwise we would loose assumption \eqref{assumption}. 
However, we can 
use the explicit expression of the vector fields \eqref{campinonomo} to obtain an ad hoc version of the Rothschild and Stein method. 

Let $D$ be a smooth, open bounded set in $\R^n$ 
As shown in the previous section we can 
assume, up to a change of variable, that 
$0\in \partial D$ and that there exists $V$ such that 
$$\partial D \cap V =\{(x_1, \hat x): x_1 =0\}.$$
Moreover, there exists a regular function $w$ such that 
the vector fields $(X_i)$ can be represented as 
\eqref{campinonomo}. We prove the following result analogous to 
 \cite{NSW} in our simplified setting:
\begin{proposition} \label{proteta}
There exist  open neighborhoods
$U$ of $\;0$ in $\R^n$ and $V,\, W$ of $\;0\in \partial D \subset \R^n$, with $W \subset V$ and, for every $z$ fixed  in $W$, 
a change of coordinates $\Xi_z$ such that 
\begin{itemize}
\item {the function $x\rightarrow \Xi_z(x)$
is a diffeomorphism from $U$ on the image}
\item {in the new coordinates the vector fields will admit the following representation:
$$ \Xi_z (X_{1}) = \partial_{y_1}$$$$ d \Xi_z (X_{i})=\partial_{y_i} + \sum_{deg(j)>deg(i)} a_{i, j} (y_1 + w_z(\y), \y)\partial_{y_j} +
X_i w_z \partial_{y_1},\quad i=2,\cdots,n. $$
}
\end{itemize}
\end{proposition}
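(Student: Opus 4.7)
The plan is to construct $\Xi_z$ as the composition of two maps that mirror the two-step construction carried out at the origin in Section 4.1: first pass to canonical second-type coordinates centered at $z$, and then straighten the boundary with a graph change of variables.

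First, since $X_1,\ldots,X_n$ form a smooth frame satisfying H\"ormander's condition on $V$, the map
\[
v\longmapsto \exp(v_1 X_1)\exp\Bigl(\sum_{i=2}^n v_i X_i\Bigr)(z)
\]
is a local diffeomorphism from a neighborhood of $0\in\mathbb{R}^n$ onto a neighborhood $U$ of $z$, uniformly in $z\in W$ once $W\subset V$ is chosen small enough. Call its inverse $\Theta_z$; this is the logarithmic map $\Theta_{X_1,\ldots,X_n,z}$ of Section 2. A direct computation, identical in structure to the one proving \eqref{struttura campi}, shows that in these coordinates
\[
d\Theta_z(X_1)=\partial_{v_1},\qquad d\Theta_z(X_i)=\partial_{v_i}+\sum_{deg(j)>deg(i)} a_{i,j}(v)\,\partial_{v_j},
\]
with the same polynomials $a_{i,j}$ as in \eqref{campinonomo}. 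Indeed the $X_i$ are push-forwards via $\Xi$ of the original Carnot generators $Z_i$, so the Lie brackets and commutation relations of the frame are intrinsic to the stratified algebra $\mathfrak{g}$, and the Rothschild--Stein polynomial form is therefore preserved under a change of base point.

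Second, because $X_1$ is transverse to $\partial D$ and $\partial D\cap V=\{x_1=0\}$ is smooth, shrinking $U$ if necessary the image $\Theta_z(\partial D\cap U)$ is the graph $\{v_1=w_z(\hat v)\}$ of a unique smooth function $w_z$ defined in a neighborhood of $0\in\mathbb{R}^{n-1}$, with $w_z(0)=0$. Define
\[
\Xi_z(x):=\bigl(v_1-w_z(\hat v),\,\hat v\bigr)\qquad\text{where } v=\Theta_z(x).
\]
This is a diffeomorphism of $U$ onto its image which carries $\partial D\cap U$ to $\{y_1=0\}$. The chain-rule computation of $d\Xi_z(X_i)$ is the exact analogue of the one performed in Section 4.1: the polynomials $a_{i,j}$ end up evaluated at $v_1=y_1+w_z(\hat y)$, while differentiating the straightening component produces the additional term $(X_i w_z)\,\partial_{y_1}$. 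This yields precisely the formulas claimed in the proposition.

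The main obstacle is the verification in the first step that in canonical second-type coordinates based at $z$ the vector fields $X_i$ acquire the Rothschild--Stein form with the \emph{same} polynomials $a_{i,j}$ that describe them at the base point $0$. This is the intrinsic statement that these coefficients are determined by the Carnot-group structure rather than by the choice of base point; it holds here because the generators $Z_i$ are left-invariant and $X_i=d\Xi(Z_i)$, so the stratified commutation relations are unchanged. Once this invariance is in hand, shrinking $W\subset V$ to guarantee that the exponential map at every $z\in W$ is a diffeomorphism on a common $U$ completes the proof.
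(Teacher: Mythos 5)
Your proposal takes a genuinely different route from the paper's. The paper constructs $\Xi_z$ out of a group homomorphism $T_z$ (a left translation composed with a graded automorphism) chosen so that its differential sends $X_1|_0$ to the subriemannian normal $\nu(z)$ and $X_2|_0,\dots,X_n|_0$ into the tangent space of $M$ at $z$, and then applies the implicit function theorem to $T_z^{-1}(M)$. You instead use directly the canonical second-type logarithmic map $\Theta_z$ associated with the frame $(X_i)$ centered at $z$, and then straighten the graph. Your key claim, that in these coordinates the $X_i$ keep exactly the same polynomials $a_{i,j}$, is correct, and the argument you give is the right one: since $X_i = d\Xi(Z_i)$, the flows of $X_i$ are images under $\Xi$ of the flows of $Z_i$, so $\Theta_z = \Theta_{Z,\Xi^{-1}(z)}\circ\Xi^{-1}$, and left-invariance of the $Z_i$ gives the Rothschild--Stein form with the same $a_{i,j}$ independently of the base point. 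This is a clean way to obtain the formula in the proposition.

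However, there is a genuine gap. The paper's proof of this proposition explicitly records the additional normalization $\nabla w_z(0)=0$, equivalently $X_i w_z(0)=0$ for $i\ge 2$, and this normalization is indispensable in the proof of Lemma~\ref{teoRS} (it is precisely what makes $X_i w_z(\hat y)\partial_{y_1}$ an operator of local degree $\le\deg(i)-1$, a claim that fails for $\deg(i)=1$ without it). In the paper's construction, $X_i w_z(0)=0$ holds because $dT_z$ maps $\partial_{v_i}|_0$, $i\ge 2$, into the tangent space $T_zM$, so $T_z^{-1}(M)$ is tangent to $\{v_1=0\}$ at the origin. Your construction does not deliver this. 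For $z=(0,\hat z)\in\partial D$ with $\hat z\ne 0$, the vectors $X_i|_z$ for $i\ge 2$ are \emph{not} tangent to $\partial D=\{x_1=0\}$: from \eqref{campinonomo} their $\partial_{x_1}$-component equals $Z_i w(\hat z)$, which is nonzero in general. Since $d\Theta_z$ sends $X_i|_z$ to $\partial_{v_i}|_0$, the image $\Theta_z(\partial D)$ is not tangent to $\{v_1=0\}$ at $0$, and a direct computation gives $\partial_{v_i} w_z(0)=-Z_i w(\hat z)\ne 0$. So although your $\Xi_z$ produces the displayed formula, the resulting $w_z$ lacks the vanishing-gradient normalization. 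To repair this you would need to rotate the frame before applying the logarithmic map, i.e.\ replace $(X_i)$ by a $z$-dependent horizontal rotation aligning the normal direction (and restore the form of the vector fields afterward with the ``same polynomials'' type argument you already give), which is essentially the role of the paper's $T_z$.
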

\begin{proof}
 Let us call $M= \{(w(\x), \x):
(0,\x) \in V\cap \partial D \},$ where $w$ is the function introduced above and defining the vector fields in \eqref{campinonomo}. 
For every $z\in M$ we will denote $T_z$ the group homomorphism which sends $0$ to $z$ 
and whose differential 
sends ${X_1}_{|0}$ in the normal $\nu(z)$ at $M$ in $z$ and 
 ${X_2}_{|0}, \cdots, {X_n}_{|0}$ in a basis 
of the tangent space to $M$ at $z$.
If we fix $z$ the implicit function theorem, (see \cite{FSSC1}, \cite{cittiman06}) ensures that there exists 
a neighborhood $U = I \times \hat U$ of $0$ and
a function $w_z: \hat U \rightarrow  \mathbb{R}$ such that $w_z(0)=0$ and
$$\{(w_z (\y), \y): \y\in  \hat U\}= T_z^{-1}(M)\cap U,$$
so that $\{T_z(w_z (\y), \y): \y\in  \hat U\}\subset M\cap V$.  We can always assume that  $\nabla_z w_z(0)=0$.  
Due to the regularity of the boundary we can find an open set $W\subset V$ 
such that for every $z\in W\cap M$ the function $w_z$ is defined on the same set $\hat U$
with values in the same set $I$. Hence we can define the map
$$E_z : U \rightarrow V, \quad E_z(y_1, \y):= T_z(y_1 + w_z (\y), \y). $$
$ E_z$ is invertible on its image and sends the plane $\{y_1=0\} \cap U$ into a suitable subset 
of $M$. The composition 
$E_0^{-1} E_z$ sends the plane $\{y_1=0\}$ into the 
the plane $\{x_1 = 0\}$, boundary of $D$. 
For every $z\in W\cap M$ its inverse function 
$\Xi_z(x)$
is a diffeomorphism on the image and $ \Xi_z(W) \subset U\subset \Xi_z(V)$ 
The vector fields $X_i$ can be represented as follows in the new coordinates (see also \cite{ASV}, \cite{CM}):  
\begin{equation*}
\begin{split}
d \Xi_z (X_{1}) & = \partial_{y_1}\\
d \Xi_z (X_{i}) & =\partial_{y_i} + \sum_{deg(j)>deg(i)} a_{i, j} (y_1 + w_z(\y), \y)\partial_{y_j} +  X_i w_z(\y) \partial_{y_1}, \quad i=2,\cdots ,n.
\end{split}
\end{equation*}
\end{proof}
 
We can now prove the following result, analogous to Theorem 5 in \cite{RS}:
\begin{lemma} \label{teoRS}
With the same notations of the previous Proposition \ref{proteta}, 
let us call
$$Z_{i}=  \partial_{y_i}+ \sum_{deg(j)>deg(i)}  a_{ij}(y) \partial_{y_j},$$
for $i=1, \cdots, n$. Then we have
$$d \Xi_z(X_{i }) - Z_{i} = R_{i, z, \Xi}$$
where $R_{i, z, \Xi}$ are vector fields of local degree $\leq deg(i)-1$ depending smoothly on $z$. Precisely
$R_{i, z, \Xi}=  \sum_j r_{ij, z}\partial_j$
where $r_{ij, z}=O(d(x,y)^{deg(j)-deg(i)+1})$. 
\end{lemma}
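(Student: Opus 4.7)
The plan is to subtract the defining formulas, identify the two resulting types of perturbation terms, and bound each via Taylor expansion together with the vanishing of $w_z$ at the origin. From Proposition \ref{proteta} and the definition of $Z_i$ one obtains at once
\begin{equation*}
d\Xi_z(X_i)-Z_i=\sum_{deg(j)>deg(i)}\bigl[a_{ij}(y_1+w_z(\hat y),\hat y)-a_{ij}(y_1,\hat y)\bigr]\partial_{y_j}+(X_iw_z)\,\partial_{y_1},
\end{equation*}
so the perturbation decomposes into a sum of $\partial_{y_j}$-contributions (with $deg(j)>deg(i)$), coming from the shift of the first argument of the homogeneous polynomials $a_{ij}$, plus a single $\partial_{y_1}$-contribution involving the derivative of $w_z$ along $X_i$.

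The key geometric input is the order of vanishing of $w_z$ at the origin. Since $w_z$ is smooth and, by the construction recorded in the proof of Proposition \ref{proteta}, satisfies $w_z(0)=0$ and $\nabla w_z(0)=0$, one has $|w_z(\hat y)|\leq C|\hat y|^2$ in the Euclidean sense, with $C$ depending smoothly on $z$. Near the origin, all coordinates $y_j$ belong to a stratum of Carnot weight $\geq 1$, so the Euclidean norm is dominated by the Carnot--Carath\'eodory distance; this upgrades the estimate to $w_z(\hat y)=O(d(0,\hat y)^2)$, again with smooth $z$-dependence. For each $\partial_{y_j}$-term with $deg(j)>deg(i)$ I then apply Taylor's formula in the first variable,
\begin{equation*}
a_{ij}(y_1+w_z(\hat y),\hat y)-a_{ij}(y_1,\hat y)=w_z(\hat y)\int_0^1\partial_1 a_{ij}(y_1+\theta w_z(\hat y),\hat y)\,d\theta,
\end{equation*}
and exploit the homogeneity of $a_{ij}$: since $a_{ij}$ has Carnot degree $deg(j)-deg(i)$, the polynomial $\partial_1 a_{ij}$ is homogeneous of degree $deg(j)-deg(i)-1$ and is therefore bounded near the origin by $Cd^{deg(j)-deg(i)-1}$. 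Multiplying by the bound on $w_z$ yields $r_{ij,z}=O(d^{deg(j)-deg(i)+1})$, which is exactly the decay required to conclude that the associated vector field has local degree $\leq deg(i)-1$.

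For the remaining $(X_iw_z)\partial_{y_1}$-term, the vanishing of $\nabla w_z$ at the origin combined with the polynomial expression \eqref{campinonomo} for the coefficients of $X_i$ gives $X_iw_z=O(d)$ (applying $X_i$, with bounded polynomial coefficients, to a function vanishing to second Euclidean order); hence $r_{i1,z}=O(d^{2-deg(i)})$ is satisfied (trivially once $deg(i)\geq 2$), which again yields a vector-field degree $\leq deg(i)-1$. Smooth dependence of $R_{i,z,\Xi}$ on $z$ follows from the smooth dependence of $w_z$ on $z$ provided by the implicit function theorem step in Proposition \ref{proteta}. The only nontrivial obstacle is the order-of-vanishing step for $w_z$: once the comparison between Euclidean and subriemannian distances upgrades $w_z=O(|\hat y|^2)$ to $w_z=O(d^2)$ uniformly in $z$, everything else reduces to polynomial bookkeeping and the homogeneity of the $a_{ij}$.
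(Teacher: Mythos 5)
Your proof is correct and follows essentially the same route as the paper's: the same decomposition of $d\Xi_z(X_i)-Z_i$ into the $a_{ij}$-difference terms plus the $(X_iw_z)\partial_{y_1}$ term, the same exploitation of the second-order vanishing of $w_z$ at the origin (via the construction of $T_z$ in Proposition \ref{proteta}), and the same use of the Carnot homogeneity of the $a_{ij}$. You merely make the bookkeeping more explicit by writing out the Taylor integral remainder and spelling out the Euclidean-to-Carnot comparison, where the paper factors the polynomial difference directly and quotes the order of vanishing more tersely.
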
 
\begin{proof}
It is a direct computation. Indeed the assertion is true for $i=1$. For every $i>1$ the difference 
$d \Xi_z(X_{i }) - Z_{i}$ 
can be expressed as 
$$d \Xi_z(X_{i }) - Z_{i} = \sum_{deg(j)>deg(i)}  \Big(a_{ij}(y_1 + w_z(\hat y), \hat y) - a_{ij}(y_1, \hat y)\Big)\partial_{y_j} + X_i w_z(\hat y)\partial_{y_1}.$$
We first note that, since  $w_z(0)=0$ and we can always think that also  $X_i w_z(0)=0$, then 
$X_i w_z(\hat y)\partial_{y_1}$ is an operator of degree 0. 
Moreover, being $a_{ij}$ homogeneous polynomials, 
 their difference can be represented as a homogeneous polynomial. Precisely 
there exists a suitable polynomial $a^1_{ij}$ homogeneous of degree $deg(i)-deg(j)-1$ such that 
$$a_{ij}(y_1 + w_z(\hat y), \hat y) - a_{ij}(y_1, \hat y)= w_z(\hat y)  
a^1_{ij}(y_1,y_1 + w_z(\hat y), \hat y) =$$$$= O(\|\hat y\|^2)
a^1_{ij}(y_1,y_1 + w_z(\hat y), \hat y),$$
since $w_z$ and its gradient vanish at $\hat y=0$. 
\end{proof}

A similar relation holds between the vector fields restricted to the boundary: 
\begin{lemma} \label{teo2RS} Using the same notation of Proposition \ref{proteta} and 
setting $$\hat Z_{i}=  \partial_{y_i}+ \sum_{deg(j)>deg(i)}  a_{ij}(0, \hat y) \partial_{y_j},$$ we get: 
$$d {\hat \Xi}_z(\hat X_{i }) = {\hat Z}_{i} + {\hat R}_{i, z, \Xi}$$
where ${\hat R}_{i, z, \Xi}$ are vector fields of local degree $\leq deg(i)-1$ depending smoothly on $z\in W$. 
\end{lemma}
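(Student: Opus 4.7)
The plan is to adapt the direct computation in the proof of Lemma \ref{teoRS} to the tangential picture on the boundary. Since $\hat X_i$ in \eqref{hatticampi} is, by construction, the tangential component of $X_i$ restricted to $\{x_1=0\}$, and since $\Xi_z$ sends the flattened boundary $\{x_1=0\}$ to the plane $\{y_1=0\}$ (as in Proposition \ref{proteta}), the pushforward $d\hat\Xi_z(\hat X_i)$ is obtained from the formula for $d\Xi_z(X_i)$ in Proposition \ref{proteta} simply by setting $y_1=0$ in the coefficients and by discarding the normal term $X_i w_z\,\partial_{y_1}$, which is absent from a tangential operator. This yields
\begin{equation*}
d\hat\Xi_z(\hat X_i) = \partial_{y_i} + \sum_{deg(j)>deg(i)} a_{ij}(w_z(\hat y),\hat y)\,\partial_{y_j}.
\end{equation*}

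Subtracting the frozen model $\hat Z_i$ then gives
\begin{equation*}
\hat R_{i,z,\Xi} := d\hat\Xi_z(\hat X_i) - \hat Z_i = \sum_{deg(j)>deg(i)} \bigl[a_{ij}(w_z(\hat y),\hat y) - a_{ij}(0,\hat y)\bigr]\,\partial_{y_j}.
\end{equation*}
Since each $a_{ij}$ is a homogeneous polynomial of weighted degree $deg(j)-deg(i)$ in $(v_1,\hat v)$ with $deg(v_1)=1$, the same algebraic manipulation used in Lemma \ref{teoRS} gives a factorization $a_{ij}(w_z(\hat y),\hat y) - a_{ij}(0,\hat y) = w_z(\hat y)\,a^1_{ij}(0,w_z(\hat y),\hat y)$, with $a^1_{ij}$ a polynomial of weighted degree $deg(j)-deg(i)-1$. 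Moreover, because $w_z(0)=0$ and $\nabla w_z(0)=0$ by the construction of $w_z$ in Proposition \ref{proteta}, the function $w_z$ vanishes to Euclidean second order at $\hat y=0$, which translates into $w_z(\hat y)=O(\hat d(0,\hat y)^2)$ with respect to the tangential Carnot--Carath\'eodory distance.

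Combining these two ingredients with the homogeneity bound $|a^1_{ij}(0,w_z(\hat y),\hat y)| \leq C\,\hat d(0,\hat y)^{deg(j)-deg(i)-1}$ (which holds since the contribution of $w_z(\hat y)=O(\hat d^2)$ in the first slot respects the weight $1$ assigned to that slot), we conclude that each coefficient of $\hat R_{i,z,\Xi}$ is $O(\hat d(0,\hat y)^{deg(j)-deg(i)+1})$. As $\partial_{y_j}$ has local $\hat d$-degree $deg(j)$, this proves that $\hat R_{i,z,\Xi}$ is a vector field of local degree at most $deg(i)-1$; smooth dependence on $z\in W$ follows from the smoothness of the map $z\mapsto w_z$ already established in Proposition \ref{proteta}. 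The only delicate point, and the main obstacle one has to address carefully, is the conversion of the Euclidean vanishing of $w_z$ into weighted-degree bounds in $\hat d$; this is handled exactly as in the companion Lemma \ref{teoRS}, exploiting that horizontal coordinates have homogeneous weight one while vertical ones vanish even faster, so no essentially new difficulty arises.
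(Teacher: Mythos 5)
Your proposal is correct and follows the same route the paper implicitly prescribes: the paper itself states only ``we omit the proof which is exactly the same as the previous lemma,'' and your argument carries out precisely the adaptation of the computation from Lemma \ref{teoRS} to the tangential setting. The key steps---restricting the formula of Proposition \ref{proteta} to the boundary plane and dropping the normal component to identify $d\hat\Xi_z(\hat X_i)$, factoring the difference $a_{ij}(w_z(\hat y),\hat y)-a_{ij}(0,\hat y)=w_z(\hat y)\,a^1_{ij}$, and invoking the second-order vanishing of $w_z$ together with the homogeneity bound on $a^1_{ij}$ to get coefficients of order $O(\hat d^{\,deg(j)-deg(i)+1})$---mirror the proof of Lemma \ref{teoRS} exactly, so this fills in the omitted details rather than taking a different path.
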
 
\begin{proof} We omit the proof which is exactly the same as the 
previous lemma. 
\end{proof}

\subsection{Properties of the fundamental solution and its approximating ones}
The vector fields 
$(X_i)_{i=1, \cdots, n }$ in \eqref{campinonomo}, as well as their restriction to 
the boundary $(\hat X_i)_{i=2, \cdots, n}$, are in general non homogeneous in the variables $x$, 
but we have proved in the previous section that for every $z$ their images through $\Xi_z$ admit 
homogeneous approximating vectors fields. Then calling 
$X_{i, z } = d {\Xi_z}^{-1}({Z}_{i})$ 
for $i=1, \cdots, n,$ and applying the change of variable $\Xi$ 
to the result of Lemma \ref{teoRS}, we deduce that for every 
$i=1,\cdots, n$ 
there exists an operator $R_{i, z }$ such that $R_{i, z }\le deg(i)-1$ and
\begin{equation}\label{approssimacampo} X_{i } = X_{i, z } + {R}_{i, z}. \end{equation}
Calling $  \hat X_{i, z}= d {\hat \Xi_z}^{-1} (\hat Z_{i})
$ for $ i=2,\cdots, n,$ we obtain from  Lemma \ref{teo2RS} that for every  
$i=2,\cdots, n$ there exists a vector field
$  \hat R_{i, z}$ such that $ \hat R_{i, z}\leq deg(i)-1$ and 
$$ \hat X_{i } =  \hat X_{i, z}+ {\hat R}_{i, z}.$$
The associated sub-Laplacian type operators
are defined as
\begin{equation}\label{ponteggio}
	\Delta_z = \sum_{i=1}^m X_{i, z}^2,\quad \hat \Delta_z = \sum_{i=2}^m {\hat X}_{i, z}^2,\quad 
 \Delta_Z = \sum_{i=1}^m Z_{i}^2,\quad \hat \Delta_Z = \sum_{i=2}^m {\hat Z}_{i}^2,
\end{equation}
with fundamental solutions 
$\Gamma_{z, \Delta}$, $\hat \Gamma_{z, \hat \Delta}$, 
$\Gamma_{\Delta_Z}$ and $\hat \Gamma_{\hat \Delta_Z}$ respectively. 
Note that  $\Gamma_{\Delta_Z}$ and $\hat \Gamma_{\hat  \Delta_Z}$ do not depend on the fixed point $z$. 
 
We can now apply the parametrix method  of 
\cite{JSC}, recalled in \eqref{piantina} and \eqref{e: informal serie} to 
estimate the fundamental solutions 
$\Gamma_\Delta$ and $\hat \Gamma_{\hat \Delta}$, associated to the operators \eqref{ohscusa} and \eqref{scusa} respectively. 
The argument is similar to the one applied 
in Section \ref{s:LaplaceBeltrami} but  in this case 
the proof is standard, since we do not have to take 
care of the different homogeneous dimensions of the Riemannian and 
subriemannian structures. Hence we state without proof the following lemma:
\begin{lemma} \label{lemmagamma} 
Let us consider the operators defined in \eqref{ponteggio}. Then 
$$H=\Delta - \Delta_z \quad \hat H=\hat\Delta - \hat\Delta_z $$
are differential operators of degree 1. As a consequence 
\begin{equation} \nonumber
\Gamma_\Delta(x,z)  -  \Gamma_{z, \Delta}(x, z)=
\Gamma_\Delta(x,z)  - \Gamma_{\Delta_Z}(\Xi_z(x), 0) 
\end{equation}
are kernels of type $3$, with respect to the vector fields 
$X_i$ and the distance $d$. Analogously
\begin{equation} \nonumber
\hat \Gamma_{\hat \Delta}(\hat x,  \hat z)  -  \hat \Gamma_{z, \hat \Delta}(\x, \hat z) 
=\hat \Gamma_{\hat \Delta}(\x, \hat z)  - \hat \Gamma_{\hat \Delta_Z}(\hat \Xi_{\hat  z}(\hat x), 0) 
\end{equation} 
are kernels of type $3$ with respect to the 
vector fields 
$\hat X_i$ and the distance $\hat d$. \end{lemma}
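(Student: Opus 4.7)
The plan is to verify that $H$ and $\hat H$ are differential operators of degree at most $1$ using the splittings provided by Lemmata \ref{teoRS} and \ref{teo2RS}, and then to feed this information into the standard Levi parametrix scheme recalled in Section \ref{paramethod}. I would describe the full-space statement in detail; the boundary statement follows by the verbatim argument with $\hat X_i$, $\hat X_{i,z}$, the distance $\hat d$, and the homogeneous dimension $\hat Q = Q-1$ replacing their full-space counterparts.

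For the degree bookkeeping, I would use the identity $X_i = X_{i,z} + R_{i,z}$ coming from \eqref{approssimacampo}, in which $R_{i,z}$ is a differential operator of degree at most $deg(i)-1 = 0$ near $z$ (by Lemma \ref{teoRS}, its coefficients $r_{ij,z}$ vanish to order $deg(j)-deg(i)+1$ at $z$). Writing
\begin{equation*}
X_i^2 - X_{i,z}^2 \;=\; X_i R_{i,z} + R_{i,z} X_{i,z},
\end{equation*}
each summand is the product of an operator of degree at most $1$ with one of degree at most $0$, and so has degree at most $1$. Summing over $i$ and adding the first-order correction $\sum_i b_i X_i$ (of degree exactly $1$) shows that $H = \Delta - \Delta_z$ has degree at most $1$, so in the language of Section \ref{paramethod} we are in the case $2-\alpha = 1$, i.e., $\alpha = 1$. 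It follows that $R_1(x,z) := H^{(x)}\Gamma_{z,\Delta}(x,z)$ is a kernel of type $1$ with respect to $d$.

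With this in hand the remainder is a routine parametrix iteration: I would set
\begin{equation*}
\Phi \;:=\; \sum_{j=0}^{\infty} (E_{R_1})^{j} R_1
\end{equation*}
and use the convolution rule, according to which the convolution of a kernel of type $\lambda$ with one of type $\mu$ is a kernel of type $\lambda + \mu$, together with the iterated-integral estimate of Lemma 7.3 in \cite{Jerison} (see also \cite{JSC,blu_a}), to conclude that $\Phi$ converges to a kernel of type $1$. The representation $\Gamma_\Delta - \Gamma_{z,\Delta} = E_{\Gamma_{z,\Delta}}\Phi$ then expresses $\Gamma_\Delta - \Gamma_{z,\Delta}$ as the convolution of a kernel of type $2$ with one of type $1$, hence as a kernel of type $3$, which is the claim. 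The identification $\Gamma_{z,\Delta}(x,z) = \Gamma_{\Delta_Z}(\Xi_z(x),0)$ is immediate from Proposition \ref{proteta}: the map $\Xi_z$ has constant Jacobian, sends $z$ to $0$, and carries $X_{i,z}$ to $Z_i$, so it conjugates $\Delta_z$ to the left-invariant sub-Laplacian $\Delta_Z$, whose fundamental solution depends only on the product of its two arguments.

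The main obstacle is largely notational. In Section 3 the parametrix had to be carried out $\e$-uniformly in a setting where the Riemannian homogeneous dimension $n$ collapses in the limit to the subriemannian one $Q$, which forced the non-homogeneous variant with cutoff $\rho_\e$ and the delicate interplay between $d_\e$ and $d$. Here, by contrast, both $\Delta$ and $\Delta_z$ live in the same subriemannian graded structure, so once the degree bound for $H$ is in place, convergence of $\Phi$ and the type-$3$ bound for $\Gamma_\Delta - \Gamma_{z,\Delta}$ follow from the classical Rothschild--Stein/Jerison--S\'anchez-Calle machinery without further modification; the only point where one must be attentive is that the algebraic identity $X_i^2 - X_{i,z}^2 = X_i R_{i,z} + R_{i,z} X_{i,z}$ really does preserve the correct degree count, which is what makes the explicit form of $R_{i,z}$ given by Lemma \ref{teoRS} essential.
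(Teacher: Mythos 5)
Your proposal is correct and takes precisely the route the paper gestures at: the paper states Lemma~\ref{lemmagamma} without proof, remarking only that it follows from the parametrix method of \cite{JSC} and is ``standard'' once the degree bound on $H$ is known. Your verification that $H=\sum_i(X_i R_{i,z}+R_{i,z}X_{i,z})+\sum_i b_i X_i$ has degree $\le 1$, via the explicit form of $R_{i,z}=\sum_j r_{ij,z}\partial_j$ with $r_{ij,z}=O(d(x,z)^{deg(j)-deg(i)+1})$ from Lemma~\ref{teoRS}, and the subsequent Levi iteration producing $\Gamma_\Delta-\Gamma_{z,\Delta}=E_{\Gamma_{z,\Delta}}\Phi$ of type $2+1=3$, is exactly the intended filling-in of that gap; the boundary version and the identification $\Gamma_{z,\Delta}(x,z)=\Gamma_{\Delta_Z}(\Xi_z(x),0)$ via the unit-Jacobian diffeomorphism $\Xi_z$ of Proposition~\ref{proteta} are likewise as the paper intends.
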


We will also denote  $(X_i)^*$ the formal adjoint of $X_i$. 
\begin{remark}
Let us note that for every $i=1,\cdots, n$, the vector field $X_i$ is no more self adjoint, but its 
formal adjoint differs from $X_i$ by an operator of order 0. Indeed there exists a smooth function 
$\phi_i$ such that 
\begin{equation}\label{aggiuntoXX}(X_{i})^*= -X_{i} + \phi_i, \quad i=1,\cdots,n.\end{equation}
Indeed
$$(X_i)^* =  - X_i - \sum_{deg(j) > deg(i)} \sum_k \partial_{x_1} a_{i,j}(x_1 + w(\hat x), \hat x)\partial_{x_k} w.
$$
\end{remark}

In the sequel we will denote $X_i^z$ the derivative with respect to $z$ and $X_i^x$ the one 
with respect to $x$ of a kernel $K(x,z)$.
From Proposition 5.10 in \cite{CC} (see also \cite{RS}, page 295, line 3 from below)
 we have
\begin{prop}\label{uniformderivatives}
Assume that $f\in C^{\infty}_0(\R^{n-1})$, and for $x\in \R^n$ define 
$$F(f)(x):= \int_{\R^{n-1}} \Gamma_\Delta (x, (0, \hat y)) f(\y) d\y.$$
 For every $i, {\,h}=1, \cdots, m$ 
there exist kernels 
$ \Gamma_{i,h}(x, y)$ and $S_{i}(x, y)$, 
of type $2$ with respect to the distance $d$, such that 
$$X_i F(f)(x) = $$
$$=-\int_{\R^{n-1}} 
\sum_{h=1}^m  (X^{y}_{h})^*  \Gamma_{i,h }(x, (0, \hat y))    f(\y) d\y - 
\int_{\R^{n-1}} S_{i}(x,(0, \hat y))   f(\y) d\y.$$
\end{prop}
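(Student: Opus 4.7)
The strategy is to establish, for the kernel $X_i^x\Gamma_\Delta(x,y)$, an ``integration by parts'' style identity of the form
\begin{equation*}
X_i^x\Gamma_\Delta(x,y)=-\sum_{h=1}^m(X_h^y)^*\Gamma_{i,h}(x,y)+S_i(x,y),
\end{equation*}
with $\Gamma_{i,h}$ and $S_i$ kernels of type $2$ in the sense of Definition \ref{defikernel}, and then substitute $y=(0,\hat y)$ and integrate against $f$. The dimensional count is consistent: $X_i^x\Gamma_\Delta$ has type $1$, and $(X_h^y)^*$ applied to a type-$2$ kernel gives type $1$ (modulo zero-order terms coming from \eqref{aggiuntoXX}). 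The essential task is thus to produce the $\Gamma_{i,h}$, i.e.\ to transfer a horizontal $x$-derivative onto $y$-derivatives up to an error of type $2$.

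First I would reduce to the Carnot group setting via the freezing of Section 4.2. By Lemma \ref{lemmagamma} one has
\begin{equation*}
\Gamma_\Delta(x,y)=\Gamma_{\Delta_Z}(\Xi_y(x),0)+R(x,y),
\end{equation*}
where $R$ is a kernel of type $3$. Hence $X_i^xR$ is already of type $2$ and can be absorbed into $S_i$, and it remains to analyze $X_i^x$ acting on the homogeneous frozen kernel $\Gamma_{\Delta_Z}(\Xi_y(x),0)=K(y^{-1}\cdot x)$, written in the approximating group coordinates. By Lemma \ref{teoRS}, $d\Xi_y(X_i)=Z_i+R_{i,y}$ with $R_{i,y}$ of degree strictly less than $\deg(i)=1$; the term $R_{i,y}^x K$ again has type $2$ and goes into $S_i$.

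Next I would exploit the left-invariance of $Z_i$ on the approximating group: writing $\tilde Z_i$ for the right-invariant counterpart of $Z_i$, the direct computation
\begin{equation*}
Z_i^xK(y^{-1}x)+Z_i^yK(y^{-1}x)=(Z_i-\tilde Z_i)K(y^{-1}x)=\sum_{\deg(j)>1}q_{ij}(y^{-1}x)\,Z_jK(y^{-1}x),
\end{equation*}
holds, with $q_{ij}$ a polynomial homogeneous of degree $\deg(j)-1$. By the stratification, each non-horizontal $Z_j$ of degree $k$ may be realized as an iterated commutator of horizontal fields $Z_{h_1},\ldots,Z_{h_k}$; extracting the outermost $Z_{h_1}$ one may rewrite the corresponding summand as $Z_{h_1}^y\bigl(q_{ij}\cdot(\text{lower-order derivatives of }K)\bigr)$ plus a Leibniz remainder in which one factor of $q_{ij}$ has been differentiated, and the latter gains one power of the distance $d(x,y)$, hence has type $2$. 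Iterating this procedure until all $x$-derivatives have been moved onto $y$, then pulling the resulting identity back through $\Xi_y$ and invoking \eqref{aggiuntoXX} to replace each $X_h^y$ by $-(X_h^y)^*+\phi_h$, produces the desired decomposition.

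The main obstacle I expect is the bookkeeping in the algebraic identity above: one must iterate the commutator expansion and verify at each step that the polynomial factors $q_{ij}$ carry exactly enough vanishing at the diagonal to compensate for the extra horizontal derivative, so that every error term really satisfies the size estimate in Definition \ref{defikernel} with $\lambda=2$, uniformly in $y$. Once the pointwise identity is established, the proposition follows by setting $y=(0,\hat y)$, differentiating under the integral (justified for $f\in C^\infty_0$ using the regularization arguments standard for CZ kernels of type $1$ in the subriemannian setting), applying Fubini, and using the uniform bounds of \cite{CC, RS} to obtain smoothness of $\Gamma_{i,h}$ and $S_i$ off the diagonal.
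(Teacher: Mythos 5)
The paper does not prove Proposition \ref{uniformderivatives}; it imports it from Proposition~5.10 of \cite{CC} (see also \cite{RS}, p.~295), so there is no in-paper argument to compare against. Your reconstruction follows the Rothschild--Stein transfer-of-derivative strategy, which is the correct general framework, and the preliminary reductions (freezing via Lemma \ref{lemmagamma}, absorbing the type-$3$ remainder and the lower-degree corrector $R_{i,y}$ into $S_i$, and using the left/right-invariant identity $Z_i^xK(y^{-1}x)+Z_i^yK(y^{-1}x)=(Z_i-\tilde Z_i)K(y^{-1}x)$) are sound. The gap is in the degree count on the ``Leibniz remainder,'' which is the heart of the matter.

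With $q_{ij}$ homogeneous of degree $deg(j)-1$ and $Z_jK$ homogeneous of degree $2-Q-deg(j)$, the product $q_{ij}Z_jK$ has degree $1-Q$, i.e.\ type $1$, as it must. Writing $Z_j=[Z_{h_1},W]$ with $W$ of degree $deg(j)-1$ and applying Leibniz to the leading piece,
\begin{equation*}
q_{ij}\,Z_{h_1}(WK)\;=\;Z_{h_1}\bigl(q_{ij}\,WK\bigr)\;-\;\bigl(Z_{h_1}q_{ij}\bigr)\,WK,
\end{equation*}
the first term does carry the type-$2$ kernel $q_{ij}WK$ under a horizontal derivative, as you want. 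But $Z_{h_1}q_{ij}$ is homogeneous of degree $deg(j)-2$ while $WK$ has degree $3-deg(j)-Q$, so $\bigl(Z_{h_1}q_{ij}\bigr)WK$ has degree $1-Q$: it is again of type $1$, not type $2$. Differentiating the polynomial coefficient $q_{ij}$ by a degree-$1$ field \emph{loses} one power of $d(x,y)$; it does not gain one. The same happens with the error you implicitly create in converting $Z_{h_1}^x$ into $Z_{h_1}^y$: the substitution $Z_{h_1}^x=-Z_{h_1}^y+(Z_{h_1}-\tilde Z_{h_1})$ produces $\sum_l q_{h_1 l}Z_l(q_{ij}WK)$, and every summand has degree $1-Q$ as well. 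So after one pass you are left with type-$1$ errors of the same general form as the one you started from (a homogeneous polynomial of degree $k$ in $y^{-1}x$ times a degree-$(k{+}1)$ left-invariant derivative of $K$), with no visible decreasing complexity, and the ``iterate until all $x$-derivatives have moved onto $y$'' step does not terminate as stated. This is precisely why the underlying fact is nontrivial in \cite{RS} and \cite{CC} and is cited rather than reproved in the paper; you should either invoke the cited result as a black box, as the authors do, or carry out the genuine Rothschild--Stein bookkeeping, which requires more structure than the one-line Leibniz estimate in your sketch.
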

\begin{lemma} \label{gammanabla}Let $f\in C_0^\infty(\R^n)$. 
Let us call $$G(f)(x) := \int_{\R^n} \Gamma_\Delta(x,y)f(y)dy$$
Then there exists a kernel $S$ of type 1 such that the 
 operator $G_1(f):= G(\nabla f)$ can be represented as 
$$G(\nabla f) = E_S(f), $$
where $E_S$ is the operator with kernel $S$.
\end{lemma}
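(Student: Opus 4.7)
The plan is to perform a direct integration by parts, transferring the horizontal derivative from $f$ onto $\Gamma_\Delta$. For each $i=1,\ldots,m$, since $f\in C_0^\infty(\R^n)$, I write
\begin{equation*}
G(X_i f)(x) \;=\; \int_{\R^n} \Gamma_\Delta(x,y)\, X_i^y f(y)\, dy \;=\; \int_{\R^n} (X_i^y)^* \Gamma_\Delta(x,y)\, f(y)\, dy,
\end{equation*}
justifying the transfer of the derivative onto $\Gamma_\Delta$ by excising a small ball $B(x,\rho)$ around the pole and letting $\rho\to 0$: the boundary contributions vanish because $\Gamma_\Delta(x,y)\sim d(x,y)^{2-Q}$ while $|\partial B(x,\rho)|\sim \rho^{Q-1}$, so the integrand on $\partial B(x,\rho)$ is of order $\rho$. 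Using the adjoint formula \eqref{aggiuntoXX}, which states $(X_i)^* = -X_i + \phi_i$ with $\phi_i$ smooth and bounded, I then obtain
\begin{equation*}
G(X_i f)(x) \;=\; \int_{\R^n} S_i(x,y)\, f(y)\, dy, \qquad S_i(x,y)\; :=\; -X_i^y \Gamma_\Delta(x,y) + \phi_i(y)\, \Gamma_\Delta(x,y).
\end{equation*}

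Interpreting $G(\nabla f)$ componentwise in the horizontal frame $\{X_1,\ldots,X_m\}$, the (vector-valued) kernel $S:=(S_i)_{i=1}^m$ is the candidate and $G_1(f)=E_S(f)$ by construction. It remains to verify that each $S_i$ is of type $1$ in the sense of \eqref{e:sileva} (adapted to the non-homogeneous vector fields of Section 4 as in Definition \ref{defikernel}). The first summand $X_i^y \Gamma_\Delta(x,y)$ is a first-order derivative of a kernel of type $2$, hence of type $1$, and applying further derivatives in $x$ merely raises the differential order and preserves the type-$1$ bound. The second summand $\phi_i(y)\,\Gamma_\Delta(x,y)$ is a smooth bounded function times a kernel of type $2$, which in a bounded neighborhood satisfies the size bound of a type-$1$ kernel via the trivial estimate $d(x,y)^{2-p-Q}\le C\, d(x,y)^{1-p-Q}$.

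The argument is essentially routine; the only point that requires a little care is that the kernel estimates in \eqref{e:sileva} are phrased in terms of $x$-derivatives, while the definition of $S_i$ involves a $y$-derivative of $\Gamma_\Delta$. I would handle this by viewing $\Gamma_\Delta^\ast(y,x):=\Gamma_\Delta(x,y)$ as the fundamental solution of the formal adjoint operator $\Delta^*$ (which by \eqref{aggiuntoXX} differs from $\Delta$ by lower-order terms) and invoking the analogue of Lemma \ref{lemmagamma} for $\Delta^*$ to transfer the type-$2$ estimate of $\Gamma_\Delta$ to $y$-derivatives. This transfer from $x$- to $y$-derivative estimates is the only mildly non-trivial point in an otherwise direct integration-by-parts argument.
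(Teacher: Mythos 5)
Your opening reduction is identical to the paper's: integrate by parts to move $X_i$ onto $\Gamma_\Delta$, then apply the adjoint formula \eqref{aggiuntoXX} to write $(X_i^y)^* = -X_i^y + \phi_i$, so that the candidate kernel is $S_i = -X_i^y\Gamma_\Delta + \phi_i\Gamma_\Delta$, and the zero-order piece $\phi_i\Gamma_\Delta$ is trivially of local type $1$. The divergence is in how you justify that the genuinely singular piece $X_i^y\Gamma_\Delta(x,y)$ is a kernel of type $1$, which, as you correctly flag, is not immediate because \eqref{e:sileva} is stated for derivatives in the first argument. The paper resolves this with the frozen-coefficient machinery it has already built: it decomposes $X_i^y = X_{i,z}^y + R_{i,z}^y$ via \eqref{approssimacampo} and $\Gamma_\Delta = \Gamma_{z,\Delta} + K$ with $K$ of type $3$ by Lemma \ref{lemmagamma}, then invokes Rothschild--Stein (\cite{RS}, p.~295) for the single fact that $X_{i,z}^y\Gamma_{z,\Delta}$ is of type $1$, and finishes with the equivalence of $d$ and $d_z$ from Lemma \ref{distanze-fuori}. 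You instead propose to view $\Gamma_\Delta(x,y)$ as the fundamental solution of $\Delta^*$ in the $y$-variable and appeal to an ``analogue of Lemma \ref{lemmagamma} for $\Delta^*$.'' That route is plausible but, as written, has a gap: the paper does not establish type bounds for $\Gamma_{\Delta^*}$, and the identity $\Gamma_\Delta(x,y) = \Gamma_{\Delta^*}(y,x)$ together with the required type-$2$ estimates for $\Gamma_{\Delta^*}$ would themselves need a parametrix argument of exactly the kind the paper already ran for $\Delta$ --- so you would essentially be re-deriving (for $\Delta^*$) what Lemma \ref{lemmagamma} and the RS reference give you for free. The paper's route is therefore the more economical given the lemmas in hand; yours would work but needs the adjoint version of Lemma \ref{lemmagamma} to be stated and proved rather than merely invoked. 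A minor plus on your side: you justify the integration by parts near the pole with an explicit excision-and-limit argument, which the paper leaves tacit.
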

\begin{proof} 
We have 
$$G(X_i f) = \int \Gamma_\Delta(x,z) X_i^z f(z) dz = \int (X_i^z)^*\Gamma_\Delta(x,z)  f(z) dz.$$
Hence we only have to prove that the kernel 
$$S:=(X_i^z)^*\Gamma_\Delta(x,z)$$
is a kernel of type 1 with respect to the distance $d$. By \eqref{aggiuntoXX} there exist regular functions $\phi_i$ such that 
$$(X_i^z)^* =  - X_i^z + \phi_i.$$
On the other side, by \eqref{approssimacampo} 
for every $i=1,\cdots, n$ 
there exist and operator $R_{i, z }$  such that $deg(R_{i, z })\le deg(i)-1$ and
$$ X^z_{i } = X^z_{i, z } + R^z_{i, z}. $$
Finally in \cite{RS}, page 295, line 3 from below, it is proved that 
$$X^z_{i, z } \Gamma_{z, \Delta}$$
is an kernel of type 1. Now we use the fact that 
$K=\Gamma_{\Delta}- \Gamma_{z, \Delta}$ is an operator of type 3, 
to conclude that 
$$S= (X^z_i)^* \Gamma_\Delta = (- X^z_{i, z } - R^z_{i, z} + \phi_i)(\Gamma_{z, \Delta} + K)$$
is a kernel of local type 1 with respect to the distance $d_z$ associated with the 
vector fields $X_{i,z}$. On the other side as in Lemma 
\ref{distanze-fuori}, the distances $d$ and $d_z$ are equivalent, so that 
the conclusion follows.
\end{proof}

\subsection{The reproducing formula for non homogeneous vector fields}
In this section we prove Theorem \ref{teorema1}.  
The proof is obtained, via the results of the previous 
section,
by  reducing to the analogous result for homogeneous vector fields, already established in Lemma \ref{l:Gammaconvolve}. 
\begin{proof}[\bf{Proof of Theorem \ref{teorema1}}]
By Lemma \ref{lemmagamma}, 
\begin{equation}\label{lenotazioni}{\hat \Gamma}_{\hat \Delta}(\x, \z)  - 
{\hat \Gamma}_{\hat \Delta_Z}(\hat \Xi_{\hat  z}(\hat x), 0)  \end{equation}
is a kernel of type $3$ with respect to the vector fields $\hat X_i$. 
For the vector fields $(Z_i)_{i=1, \cdots, n}$ and the fundamental solution associated to the 
corresponding sub-Laplacian type operator, we can apply Lemma \ref{l:Gammaconvolve}, so that 
$$ {\hat \Gamma}_{\Delta_Z}(\hat \Xi_{\hat  z}(\hat x), 0) - 
\int_{\R^{n-1}}   \Gamma_{\Delta_Z}((0,\x), (0,\y))
\Gamma_{\Delta_Z}((0,\y), (0,\z)) d\y $$
is a kernel of type $5/2$ with respect to the vector fields $\hat X_{i,z}$. 
Using Lemma \ref{teo2RS} we deduce that a kernel has the same type 
with respect to the vector fields $\hat X_i$ and $\hat X_{i,z}$. 
Inserting in \eqref{lenotazioni} we get that
\begin{equation}\label{labelaggiunta}{\hat \Gamma}_{\hat \Delta}(\x, \z)  - \int_{\R^{n-1}}   \Gamma_{\Delta_Z}((0,\x), (0,\y))
\Gamma_{\Delta_Z}((0,\y), (0,\z)) d\y \end{equation}
is a kernel of type $5/2$.  
Applying again Lemma \ref{lemmagamma} we deduce that 
the following difference, 
$$\int_{\R^{n-1}}  \Gamma_{\Delta_Z}((0,\x), (0,\y))
\Gamma_{\Delta_Z}((0,\y), (0,\z)) d\y - 
\int_{\R^{n-1}}  \Gamma_{\Delta}((0,\x), (0,\y))
\Gamma_{\Delta}((0,\y), (0,\z)) d\y =$$
$$\int_{\R^{n-1}}  \Gamma_{\Delta_Z}((0,\x), (0,\y))
\Big(\Gamma_{\Delta_Z}((0,\y), (0,\z)) d\y - 
\Gamma_{\Delta}((0,\y), (0,\z)) \Big) d\y +$$
$$+  
\int_{\R^{n-1}}  \Big(\Gamma_{\Delta_Z}((0,\x), (0,\y))
 - \Gamma_{\Delta}((0,\x), (0,\y))\Big)
\Gamma_{\Delta}((0,\y), (0,\z)) d\y,$$
is a kernel of type $3$.
As a consequence, we deduce from here and \eqref{labelaggiunta} that 
$${\hat \Gamma}_{\hat \Delta}(\x, \z)- 
\int_{\R^{n-1}}  \Gamma_{\Delta}((0,\x), (0,\y))
\Gamma_{\Delta}((0,\y), (0,\z)) d\y $$
is a kernel of type $5/2$.  The proof is complete.
\end{proof}

\section{Poisson kernel and Schauder estimates at the boundary}

In this section we will show the existence of a Poisson kernel for 
the Dirichlet problem, stated in Theorem 
\ref{mainRn}. From this we deduce the Schauder estimates at the boundary stated in 
Theorem \ref{c:schauderGroups}. 

Consider a bounded smooth set $D$ and a sub-Laplacian type operator $\Delta$ 
defined in $D$, 
as in \eqref{laplacoperator}, in terms of the homogeneous 
vector fields defined in \eqref{struttura campi}. 
The corresponding Dirichlet problem is expressed as
 \begin{equation}
\label{festa}
\Delta u=f \ \text{in }D, \quad u=g  \   \text{on }\partial D,
\end{equation}
for a suitable boundary datum $g$ and a smooth function $f$ defined on $D$.

As mentioned in Section \ref{nuova}, 
we can locally perform a change of variable, and reduce the domain of the Dirichlet problem 
to the half space. Hence there is an open set $V\subset \R^n$ such that $D\cap V= \{x=(x_1,\x)\in V: x_1>0\}$ and $\{x_1 =0\}$  is a non characteristic plane. Under this 
change of variable, the vector fields 
$\{X_i\}_{i=1,\cdots,m}$ will take the non homogeneous expression of 
\eqref{campinonomo}. 
Their restriction to the boundary, denoted $(\hat X_i)$ is defined in \eqref{hatticampi} induces on the set $\partial D$ a distance $\hat d$ defined in \eqref{finalmente}.  
The corresponding spaces of H\"older continuous functions, will be denoted $\hat C^{k, \alpha}$. 

We look for a Poisson operator 
in a neighborhood $V$ of a point $x_0 \in \partial D$. 
 We say that $P:C^{\infty}( V\cap\partial D)\rightarrow C^{\infty}(V\cap \overline{D})$ is a local Poisson operator 
for the   problem \eqref{festa} if,  for every $g\in C^{\infty}( V\cap\partial D)$, the function $u:=P(g)$
satisfies 
$\Delta u=0$ in $D\cap V$ and  $u(x)=g(x)$ for all $x\in \partial D\cap V$. 
We will construct a parametrix for the Poisson kernel of the Dirichlet problem, 
adapting to the present setting a method introduced by Greiner and Stein \cite{GreinerStein} 
and Jerison \cite{Jerison}. They used an approximating kernel, 
defined via pseudodifferential instruments, 
while we use here the kernel found in Theorem
\ref{teorema1}.  We will denote it as follows:
$$\hat\Gamma_{\Delta^2}(\x, \y):= \int_{\R^{n-1}}  \Gamma_{\Delta}((0,\x), (0,\z))
\Gamma_{\Delta}((0,\z), (0,\y)) d\z.$$
 
In analogy with \eqref{erreuno} we call 
$$R_1(\x, \y) := \hat \Delta \Big({\hat \Gamma}_{\hat \Delta}(\x, \y)  - \hat\Gamma_{\Delta^2}(\x, \y)\Big).$$
In the present case  $R_1$ is a kernel of type $1/2$ with respect to the distance $\hat d$. 
As in \eqref{Fi} we now call 
$$\Phi(\hat x, \hat y):= \sum_{j=0}^\infty (E_{R_1})^j(R_1)(\hat x, \hat y). $$
Using a standard singular integral  argument, 
we deduce that the series uniformly converges on any bounded open set $V_0$ and it is a kernel of type $1/2$, 
As a consequence 
\begin{equation}\label{typekernelfi}
\begin{split}
&\int_{\mathbb{R}^{n-1}\cap V_0}  \Gamma_\Delta((0, \hat x),(0, \hat z))\Phi(\hat z, \hat y) d\hat z \;\; \text {is of type 3/2  with respect to the distance }\hat d \\
& E_{\hat\Gamma_{\Delta^2}}(\Phi(\x, \y)) \;\; \text {is  of type  5/2  with respect to the same distance},
\end{split}\end{equation}
where $E_{\hat\Gamma_{\Delta^2}}$ denotes the operator with kernel $\hat\Gamma_{\Delta^2}$.
In addition the fundamental solution of the operator $\hat \Delta$ can be represented as
\begin{equation}\label{semprepara}
\hat \Gamma_{\hat \Delta}(\x, \y) = \hat\Gamma_{\Delta^2}(\x, \y) + E_{\hat\Gamma_{\Delta^2}}\Phi(\x, \y),
\end{equation}
for $\x, \y\in V_0\cap \R^{n-1}.$

Let us now prove Theorem \ref{mainRn} with 
$$R(g)(\hat y) :=\int_{\mathbb{R}^{n-1}\cap V_0} \int_{\mathbb{R}^{n-1}\cap V_0} \Gamma_\Delta((0, \hat y), (0,\hat s)) \Phi((0, \hat s), (0,\hat z)) \hat \Delta g(\hat z) d\hat s d\hat z  $$
and 
\begin{equation}\label{cappa}
K: {\hat C}^{2}(\partial D\cap V_0) \rightarrow \hat C(\partial D\cap V_0), \quad K= K_1 + R. 
\end{equation}

\begin{proof}[{\bf Proof of Theorem \ref{mainRn}}]
Since we are proving a local property, it is not restrictive that the 
boundary datum $g$ belongs to $C^\infty_0 (\partial D \cap V_0)$. 
Since $\Gamma_{\Delta}$ is the fundamental solution of $\Delta$,	
then the function $u=
P(g) (x)$
satisfies 
$\Delta u=0 $ in $ V\cap D.$ 
Hence by  \eqref{semprepara}, we have
$$\nonumber  P(g) (0, \x)=
\int_{\mathbb{R}^{n-1}} \Big(\hat\Gamma_{\Delta^2}(\x, \y) + E_{\hat\Gamma_{\Delta^2}}(\Phi(\x, \y))\Big)\hat  \Delta g(\y) d\y =$$$$=
\int_{\mathbb{R}^{n-1}} \hat \Gamma_{\hat \Delta}(\x, \y))
\hat  \Delta g(\y) d\y   =g(\x).$$
\end{proof}

Once proved the existence of a Poisson kernel, the proof of Schauder 
estimate is based on properties of singular integrals. We follow here the same ideas as in \cite{Jerison} 
and we prove that the operator $P$ is bounded. Since it can be represented as 
in \eqref{poissonintro} we will start with the properties of $K$.

Let us first note that both $K_1$ and $R$ can be extended to 
operators with values in $C( D \cap V)$  setting 
$$K_1(g)(y) =\int_{\partial D \cap V_0} \Gamma_{\Delta}(y, (0,\hat z)) \hat \Delta g(\hat z) 
d\hat z.$$
$$R(g)(y) =\int_{\mathbb{R}^{n-1}\cap V_0} \int_{\mathbb{R}^{n-1}\cap V_0} \Gamma_\Delta(y, (0,\hat s)) \Phi((0, \hat s), (0,\hat z)) \hat \Delta g(\hat z) d\hat s d\hat z.$$
As a consequence $K= K_1 + R $ will be considered as an operator acting between the 
following sets
$$K: \hat C^{2}(\partial D\cap V_0) \rightarrow C(D\cap V_0).$$ 

\begin{remark}\label{finiremomai}
Let us explicitly note that the spaces $C^{k, \alpha}$ associated with the 
vector fields $X_i$ defined in \eqref{campinonomo} 
are equivalent to the spaces $C^{k, \alpha}$ associated with the 
vector fields 
\begin{equation}\label{campinonomodue}
\begin{array}{ll}\Y_1 & = \partial_{x_1}, \\ 
 \Y_i & = \partial_{x_i} + \sum_{deg(j) > deg(i)} a_{i,j}(x_1 + w(\hat x), \hat x)\partial_{x_j}, i=1, \cdots, n
\end{array}
\end{equation}
since these vectors are linear combinations of the previous ones. 
\end{remark}
	\begin{lemma}
Let $D= \{(x_1,\x)\in \R \times \R^{n-1}: x_1>0\}$ be a half space with non characteristic boundary. 
%Let $V$ be an open set in $\R^{n+1}$; set $M:=\partial D\cap V$,
Then for every $V\subset \subset V_0$ there is a constant $C_1$ such that for every $g\in {\hat C}^{2, \alpha}(\partial D \cap V_0)$
\begin{equation}\label{tesicappag}
\|K(g)\|_{ C^{1, \alpha}( D \cap V)}\leq C_1 \|g\|_{{\hat C}^{2, \alpha}(\partial D \cap V_0)}.\end{equation}
In addition there is a constant $C_2$ such that 
if $g\in C_0^\infty(\partial D \cap V_0)$, then 
$$K(g)\in \left\{\phi:\, |\phi(0,\z)|\le C_2  \frac{\hat d(\z,\operatorname{supp}(g))}{|\hat B (\z, \hat d(\z,\operatorname{supp}(g)))|}\  \forall \hat z \ \\ \text{s.t. }
\hat d(\hat z, \operatorname{supp}(g)) \geq 2 
\operatorname{diam}(\operatorname{supp}(g)) \right\}.$$
	\end{lemma}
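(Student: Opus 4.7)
The two statements are standard boundedness and decay properties for the composition of a boundary restriction of $\Gamma_\Delta$ with the tangential Laplace operator; both reduce to singular/fractional-integral estimates once $K(g)$ is put in an integration-by-parts form. Two facts drive the argument. First, by \eqref{gammabehavior}, \eqref{equivdhat}, and $\hat Q=Q-1$, the restriction $\Gamma_\Delta|_{\partial D\times\partial D}$ is a kernel of type $1$ on the boundary in the sense of Definition~\ref{defikernel}. Second, by \eqref{typekernelfi} the kernel $\int\Gamma_\Delta\,\Phi$ underlying $R$ has the strictly better type $3/2$. In particular, the contribution of $R$ is always dominated pointwise by that of $K_1$, and the argument is driven by $K_1$.

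For the $C^{1,\alpha}$ bound \eqref{tesicappag}, the plan is to apply $\Y_j$ to $K(g)(y)$ (equivalently $X_j$, by Remark~\ref{finiremomai}) and invoke Proposition~\ref{uniformderivatives} with $f=\hat\Delta g$. This rewrites $\Y_j K_1(g)(y)$ as integrals of $\hat\Delta g$ against the type-$2$ kernels $\Gamma_{j,h}(y,(0,\hat z))$ and $S_j(y,(0,\hat z))$. I then integrate by parts in $\hat z$ on $\partial D$ via the adjoint formula \eqref{aggiuntoXX}, $(\hat X_i)^{*}=-\hat X_i+\phi_i$, to transfer one $\hat X_i$ off $\hat\Delta g$ onto the kernel. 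The upshot is a representation of $\Y_j K(g)(y)$ as a sum of boundary integrals of $g$ or $\hat X_i g$ against type-$2$ (on the full space) or type-$\ge 0$ (on the boundary) kernels; the $C^\alpha$-boundedness of such fractional/Calder\'on--Zygmund integrals is standard in the sub-Riemannian setting (cf.~\cite{Jerison,GreinerStein} and the references collected in Section~2), yielding the claimed estimate. The $L^\infty$ bound on $K(g)$ is immediate from the type-$1$ decay combined with \eqref{typekernelfi}.

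The far-field decay follows by direct kernel estimates. Writing
\[
K_1(g)(0,\hat z)=\int_{\partial D\cap V_0}\Gamma_\Delta((0,\hat z),(0,\hat w))\,\hat\Delta g(\hat w)\,d\hat w
\]
and using \eqref{gammabehavior}, \eqref{equivdhat}, for $\hat z$ with $\hat d(\hat z,\operatorname{supp}(g))\ge 2\operatorname{diam}(\operatorname{supp}(g))$, the kernel $\Gamma_\Delta((0,\hat z),(0,\hat w))$ is uniformly comparable to $\hat d(\hat z,\operatorname{supp}(g))^{-(Q-2)}$, which equals $\hat d(\hat z,\operatorname{supp}(g))/|\hat B(\hat z,\hat d(\hat z,\operatorname{supp}(g)))|$ up to constants since $\hat Q=Q-1$. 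Bounding $|\hat\Delta g|$ uniformly and integrating over the (compact) support of $g$ then gives the claim, with $C_2$ absorbing $\|\hat\Delta g\|_\infty\cdot|\operatorname{supp}(g)|$. The contribution $R(g)$ is of the same form but with the strictly better type-$3/2$ decay inherited from \eqref{typekernelfi}, and hence is dominated by the same upper bound.

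\textbf{Expected main obstacle.} The delicate point is the uniform $C^\alpha$-control of $\Y_j K(g)$ \emph{up to} $\partial D$, where the kernel $\Gamma_\Delta(y,(0,\hat z))$ is most singular at the contact point $y\to(0,\hat z)$. Handling this requires Proposition~\ref{uniformderivatives} to split the singularity into type-$2$ pieces that are weakly singular enough for the boundary singular-integral machinery to apply, adapted here to the general Carnot-group setting by the type-theoretic estimates of Section~2. Once these representations are in place, both bounds reduce to standard fractional integral arguments.
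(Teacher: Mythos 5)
Your proposal and the paper's proof share the same first observation: via \eqref{gammabehavior}, \eqref{equivdhat} and $\hat Q=Q-1$, the boundary restriction of $\Gamma_\Delta$ is a kernel of type $1$ with respect to $\hat d$. From there, however, the paper's argument is considerably more direct than yours. It simply notes that the associated integral operator $E_{\Gamma_\Delta}$ therefore maps $\hat C^\alpha(\partial D\cap V_0)$ boundedly into $C^{1,\alpha}(D\cap V)$ (citing \cite{NagelStein}), and then factors $K_1=E_{\Gamma_\Delta}\circ\hat\Delta$ and $R=E_{\Gamma_\Delta}\circ E_\Phi\circ\hat\Delta$. Since $\hat\Delta:\hat C^{2,\alpha}\to\hat C^\alpha$ trivially, and $E_\Phi:\hat C^\alpha\to\hat C^{\alpha+1/2}\hookrightarrow\hat C^\alpha$ because $\Phi$ is of type $1/2$, the estimate \eqref{tesicappag} drops out without ever differentiating $K(g)$. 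There is no use of Proposition~\ref{uniformderivatives}, no integration by parts, and no adjoint formula \eqref{aggiuntoXX} in the paper's proof of this particular lemma; those ingredients are reserved for the later $C^{2,\alpha}$ estimate in Theorem~\ref{albicocca}, where one genuinely needs to redistribute second derivatives.

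Beyond being unnecessarily heavy, your route has a genuine type-counting gap. Applying $\Y_j$ and invoking Proposition~\ref{uniformderivatives} already puts one derivative $(X_h^z)^*$ onto the type-$2$ kernels $\Gamma_{j,h}$, so their boundary restriction is of type $0$ with respect to $\hat d$ (a Calder\'on--Zygmund kernel); convolving this with $\hat\Delta g\in\hat C^\alpha$ already yields the $C^\alpha$ control of $\Y_j K_1(g)$ and no further manipulation is needed. If, as you propose, you additionally integrate by parts to transfer a $\hat X_i$ off $\hat\Delta g$, you put a \emph{second} tangential derivative on the kernel, whose boundary restriction then has type $-1$ with respect to $\hat d$. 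That is strictly more singular than a Calder\'on--Zygmund kernel, so your claim that the resulting boundary kernels are of type $\ge 0$ is off by one, and the standard $C^\alpha$-boundedness you want to invoke does not apply. In short: either drop the extra integration by parts (which makes your argument a longer version of the paper's), or accept that the described extra step breaks the kernel estimate. The far-field decay part of your proposal is correct and matches the paper's argument.
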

		\begin{proof} 
Clearly $\Gamma_{\Delta}((0,\z), (0,\y))$ is a kernel of type $2$ 
	with respect to the distance $d$ in the sense of Definition 
	\ref{defikernel}. Because of inequality 
	\eqref{equivdhat} we deduce that there are constants $C_1, C_2$ such that 
\begin{equation}\label{homo} 
C_1  \frac{\hat d(\z,\y)}{|\hat B (\z, d(\z,\y))|} \leq \Gamma_{\Delta}((0,\z), (0,\y))\leq C_2 \frac{\hat d(\z,\y)}{|\hat B (\z, d(\z,\y))|}
	\end{equation}
	so that $\Gamma_{\Delta}((0,\z), (0,\y))$ is a kernel of type $1$ with respect 
	to the distance $\hat d$  induced on $\partial D$, 
while the first derivatives of $\Gamma_{\Delta}((0,\z), (0,\y))$ 
	are 
	singular integrals. As a consequence  we obtain (see for example \cite{NagelStein}) 
\begin{equation}\label{nucleokappa1}
\|E_{\Gamma_\Delta}(\phi)\|_{ C^{1, \alpha}( D \cap V)}	\leq C \|\phi\|_{{\hat C}^{ \alpha}(\partial D \cap V_0)}.
\end{equation}
for every $\phi\in {\hat C}^{\alpha}(\partial D \cap V_0)$, 
where $E_{\Gamma_\Delta}$ denotes the operator with kernel $\Gamma_{\Delta}(z, (0,\y))$.
Therefore $K_1 = E_{\Gamma_{\Delta}} \circ\hat \Delta$ satisfies
$$
\|K_1(g)\|_{ C^{1, \alpha}( D \cap V)}	\leq C \|\hat \Delta g\|_{{\hat C}^{\alpha}(\partial D \cap V_0)}\leq C \|g\|_{{\hat C}^{2, \alpha}(\partial D \cap V_0)}.
$$	
Since  
$\Phi$ is a kernel of type $1/2,$ its associated operator  $E_{\Phi}$  satisfies
\begin{equation}
\|E_{\Phi} (\hat \Delta g)\|_{{\hat C}^{ \alpha + 1/2}(\partial D \cap V_0)}\leq 
 C \|\hat \Delta g\|_{{\hat C}^{\alpha}(\partial D \cap V_0)}\leq C\|g\|_{{\hat C}^{2, \alpha}(\partial D \cap V_0)}.\end{equation}
It follows that
	$$\|R(g)\|_{ C^{1, \alpha}( D \cap V)}	=\|E_{\Gamma_\Delta}E_{\Phi}(\hat \Delta g)\|_ {C^{1, \alpha}( D \cap V)}\leq 
\|E_{\Phi}(\hat \Delta g)\|_ {C^{\alpha}( D \cap V)}\leq \|g\|_{{\hat C}^{2, \alpha}(\partial D \cap V_0)},$$
	
	In particular \eqref{tesicappag} directly follows.  Also the decay property of $K$ immediately follows, since 
 $$d(\z,\y)\ge d(\z,  \operatorname{supp}g)\; $$  
 for all $\y\in \operatorname{supp}g$  and for all $\z$ such that  $\hat d(\z, \text{supp}\,g)\ge 2\operatorname{diam}(\operatorname{supp}g)$.
	\end{proof}

Arguing as in Remark \ref{finiremomai} we have the following
\begin{remark}
There are $C^\infty$ functions such that the Laplace type operator $\Delta$ 
can be expressed as 
$$\Delta = \Y_1^2 + \sum_{i=2}^m(\Y_i - Z_i w \Y_1)^2 + b_1\Y_1 + \sum_{i=2}^m b_i (\Y_i - Z_i w \Y_1)=$$
$$= \Y_1^2 + \sum_{i=2}^m(\Y_i - Z_i w \Y_1)^2 + \Big(b_1- \sum_{i=2}^m Z_i w \Big)\Y_1  + \sum_{i=2}^m b_i \Y_i =$$
$$= (1 + \sum_{i=2}^m(Z_i w)^2)\Y_1^2 + \sum_{i=2}^m\Y_i^2   
- \sum_{i=2}^m Z_i w (\Y_i\Y_1 + \Y_1\Y_i)$$
\begin{equation}\label{erratoquasiovunque}
+ 
\Big(b_1- \sum_{i=2}^m Z_i w + \sum_{i=2}^m(\Y_i - Z_i w \Y_1) Z_i w \Big)\Y_1  + \sum_{i=2}^m b_i \Y_i. \end{equation}
In particular the coefficient $1 + \sum_{i=2}^m(Z_i w)^2 $ of $\Y_1^2$ 
is smooth and bounded from above and below by positive constants. 
\end{remark}
Let us now conclude the proof of the boundedness of $P$.
	
\begin{theorem}\label{albicocca}
Let $V$, $V_0$ be open sets in $\R^{n}$, with $V\subset\subset V_0$, let $g \in {\hat C}^{2, \alpha}(\partial D \cap V_0)$. Then 
there is a constant $C_1$ such that \begin{equation}\label{boundP}
\|P(g)\|_{{ C}^{2, \alpha}(D \cap V)}\leq C_1\|g\|_{{\hat C}^{2, \alpha}( \partial D \cap V_0)}.\end{equation}
\end{theorem}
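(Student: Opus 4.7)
The plan is to combine the already-established estimate $\|K(g)\|_{C^{1,\alpha}(\bar D\cap V)} \leq C\|g\|_{\hat C^{2,\alpha}(\partial D\cap V_0)}$ of \eqref{tesicappag} with a gain-of-one-derivative mapping property for the boundary-to-interior single-layer operator
\[
E_{\Gamma_\Delta}(\psi)(x) := \int_{\partial D\cap V_0} \Gamma_\Delta(x,(0,\hat y))\,\psi(\hat y)\,d\hat y.
\]
Since $P(g) = E_{\Gamma_\Delta}(K(g)|_{\partial D})$ by the very definition of $P$ in \eqref{poissonintro}, the estimate \eqref{boundP} will follow from the bound $\|E_{\Gamma_\Delta}(\psi)\|_{C^{2,\alpha}(\bar D\cap V)} \leq C\|\psi\|_{\hat C^{1,\alpha}(\partial D\cap V_0)}$ by composition. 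This single-layer estimate is the analogue for Carnot-group sub-Laplacians of the classical Poisson-kernel trace theorem, and it will be proved by the same instruments as in \cite{Jerison}, suitably adapted to the non-homogeneous vector fields $\Y_i$ of \eqref{campinonomodue}.

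For the tangential directions, differentiation passes inside the integral: for $i=2,\dots,m$ one has $\hat X_i E_{\Gamma_\Delta}(\psi)(x)=\int \hat X_i^x\Gamma_\Delta(x,(0,\hat y))\,\psi(\hat y)\,d\hat y$. By \eqref{homo}, the restriction $\Gamma_\Delta(x,(0,\hat y))$ behaves along the boundary as a kernel of type $1$ with respect to $\hat d$, so $\hat X_i^x \Gamma_\Delta$ plays the role of a Calder\'on--Zygmund kernel of type $0$. Arguing as in \eqref{nucleokappa1} and invoking the singular integral estimates of \cite{NagelStein} adapted to this boundary geometry yields $\|\hat X_i E_{\Gamma_\Delta}(\psi)\|_{C^{1,\alpha}(\bar D\cap V)} \leq C\|\psi\|_{\hat C^{1,\alpha}}$, and iterating a second tangential derivative gives $\|\hat X_j\hat X_i E_{\Gamma_\Delta}(\psi)\|_{C^\alpha(\bar D\cap V)} \leq C\|\psi\|_{\hat C^{1,\alpha}}$ for $i,j\geq 2$.

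The remaining second derivative in the normal direction is recovered from the equation. Since $\Delta E_{\Gamma_\Delta}(\psi) = 0$ in $D\cap V$, the representation \eqref{erratoquasiovunque} of $\Delta$ as
\[
\Delta = \Big(1+\sum_{i\geq 2}(Z_i w)^2\Big)\Y_1^2 + \sum_{i\geq 2}\Y_i^2 - \sum_{i\geq 2} Z_i w(\Y_i\Y_1 + \Y_1\Y_i) + (\text{first order}),
\]
with leading coefficient $1+\sum(Z_i w)^2$ bounded away from zero in a neighborhood of $\bar x$, allows one to solve algebraically for $\Y_1^2 E_{\Gamma_\Delta}(\psi)$ in terms of tangential and mixed second derivatives and first derivatives. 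All of these have already been controlled in $C^\alpha$ by the tangential step together with the $C^{1,\alpha}$-bound on $E_{\Gamma_\Delta}(\psi)$ itself (coming from \eqref{nucleokappa1} applied to $\psi$). This produces the missing $\Y_1^2 E_{\Gamma_\Delta}(\psi)\in C^\alpha$ bound and completes the single-layer estimate, hence \eqref{boundP}.

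The main obstacle will be the tangential $C^{1,\alpha}$ singular-integral step: after the change of variables of Section~4.1 the vector fields $\hat X_i$ are no longer homogeneous, so classical Calder\'on--Zygmund theorems on Carnot groups cannot be invoked verbatim. The workaround, in the spirit of Section~4.2, is to freeze at a boundary point $z$, replace $\hat X_i$ by the homogeneous approximating vector fields $\hat Z_i$ supplied by Lemma \ref{teo2RS}, and treat the difference $\hat X_i - \hat Z_i$ as a perturbation of strictly lower local degree whose contribution is absorbed through the parametrix expansion already set up in Lemma \ref{lemmagamma}. Verifying that the Calder\'on--Zygmund cancellation survives uniformly in the base point under this reduction, so that the resulting singular integrals remain bounded between the appropriate non-homogeneous H\"older classes, is the delicate technical step on which the proof of \eqref{boundP} ultimately rests.
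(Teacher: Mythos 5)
Your overall strategy — reduce to a gain-of-one-derivative estimate for the single-layer operator $E_{\Gamma_\Delta}$, control tangential second derivatives by singular integrals, and recover the pure normal second derivative from the equation via \eqref{erratoquasiovunque} — does match the paper. But the line \emph{``Arguing as in \eqref{nucleokappa1}\ldots\ yields $\|\hat X_i E_{\Gamma_\Delta}(\psi)\|_{C^{1,\alpha}(\bar D\cap V)}\le C\|\psi\|_{\hat C^{1,\alpha}}$''} compresses exactly the step that the paper's proof centers on, and this is a genuine gap. To put $\hat X_iE_{\Gamma_\Delta}(\psi)$ in $C^{1,\alpha}$ up to the boundary you must take the normal derivative $\Y_1=\partial_1$, and the kernel $\partial_1^x\hat X_i^x\Gamma_\Delta(x,(0,\hat z))$ restricted to $\partial D$ is too singular to integrate against a merely $\hat C^\alpha$ density; you must transfer a derivative to the $\hat z$-variable. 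When you do so via the adjoint identity of Proposition \ref{uniformderivatives}, the sum over $h=1,\dots,m$ produces a term with $(X_1^z)^*=-\partial_1^z$, and since $\partial_1^z$ is normal to $\partial D$ it \emph{cannot} be integrated by parts along the boundary. The paper handles this term with a divergence-theorem step that turns the boundary integral into a volume integral over $D\cap V_1$, and this is precisely why it estimates $\tilde K$ in terms of $\|\varphi\|_{C^{1,\alpha}(D\cap V_0)}$ rather than a boundary norm, and why $K$ is set up as an operator with values in $C(D\cap V_0)$ rather than in functions on $\partial D$. Your proposal, which only keeps $\psi\in\hat C^{1,\alpha}(\partial D\cap V_0)$, cannot run the divergence-theorem argument as written; you would need to insert an extension of $\psi$ to $C^{1,\alpha}(\bar D\cap V_0)$ with comparable norm, a step you neither state nor flag.

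A secondary misdirection: you close by saying the proof ``ultimately rests'' on verifying that Calder\'on--Zygmund cancellation survives the freezing of Section 4.2 under the non-homogeneous vector fields. That issue is real, but it is already encapsulated in Proposition \ref{uniformderivatives} (drawn from \cite{CC} and \cite{RS}) and in Lemma \ref{gammanabla}, so in this paper it is an input rather than the open end. The weight should be placed on the $\partial_1^z$ normal-derivative term and its conversion to a volume integral, together with the algebraic recovery of $\Y_1^2$ from \eqref{erratoquasiovunque}, which you do identify correctly.
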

\begin{proof}
Let us fix $V_1$ such that 
$V\subset\subset V_1 \subset\subset V_0$. Thanks to the previous lemma we only have to prove that 
the operator 
$$\tilde K: {C}^{1,\alpha}(D\cap V_0)\cap 
\left\{\phi:\, |\phi(0,\z)|\le C \frac{ \hat d(\z,\operatorname{supp}(g))}{|\hat B(\z, \hat d(\z,\operatorname{supp}(g)))|}, \quad \quad \quad \quad \quad 
\quad \quad\right.$$$$\quad \quad \quad \quad \quad \quad \quad
\left.\forall \hat z \; \text{s.t. }\hat d(\hat z. \supp (g)) \geq 2 \operatorname{diam}(\operatorname{supp}(g)) \right\}
\to C^{2,\alpha}(D\cap V_0)
$$
defined as 
$$\tilde K (\varphi)(x) := 
\int_{\R^{n-1}}  \Gamma_\Delta(x, (0,\hat z)) \phi(0,\hat z) d\hat z 
$$
satisfies
\begin{equation}\label{semai}
 \|\tilde K (\varphi)\|_{C^{2,\alpha}(D\cap V)}\leq\|\varphi\|_{{C}^{1,\alpha}( D\cap V_0)}.
\end{equation}
 It is standard to recognize that for every $i, j = 2, \cdots, m$, $ \Y_i \Y_j \tilde K  $ it is bounded as operator with values in $C^{\alpha}(D\cap V_0)$
(see for example \cite{GreinerStein}, \cite{NagelStein}).

 Hence we have to estimate the normal derivative. Let us begin with the derivatives 
 $ \Y_i \Y_1 \tilde K  $ with $i = 2, \cdots, m$.
 Let $\psi \in C_0^\infty(V_1)$ such that $\psi\equiv 1$ in a neighborhood of $V$ and let $x\in V$ and $\phi\in  C^{1,\alpha}(D \cap V_0)$. 
%QUESTA ESTENSIONE E' UN PO' AZZARDATA, INVECE SI PUO FARE IN 
%MODO SEMPLICE CALCOLANDO GAMMA DOVE SERVE, QUINDI LA EVITEREI
%Let us extend $\phi$ to a new function belonging to 
%$C^{1,\alpha}(D \cap V_0)$  always denoted by $\phi$, such that 
%\begin{equation}\label{estenditranquillo}
%||\phi||_{C^{1,\alpha}(D \cap V_0)}\leq C ||\phi||_{\hat C^{1,\alpha}(\partial D \cap V_0)}\, ,
%\end{equation}
%for a constant $C$ only dependent on $V_0$.}
By Proposition \ref{uniformderivatives} 
there exist kernels 
$ (\Gamma_{1,i}(x, y))_{i=1, \cdots, m},$ 
$S_{1}(x, y)$ 
of type 2 such that 
\begin{align*}\partial_1 \tilde K(\varphi)(x) = 
&\int_{\R^{n-1}} 
\sum_{i=1}^m  (\Y^{z}_{i})^*  \Gamma_{1, i }(x, (0, \hat z)) \varphi(0,\z) d\z +
\int_{\R^{n-1}} S_{1}(x, (0, \hat z))  \varphi(0,\z) d\z\\
=&- \int_{\R^{n-1}}  \partial^z_1  \Gamma_{1,1 }(x, (0, \hat z))  \varphi(0,\z) d\z\\
&-\int_{\R^{n-1}} 
\sum_{i=2}^m   \Gamma_{1, i }(x, (0, \hat z)) \Y^{z}_{i} \varphi(0,\z) d\z 
+\int_{\R^{n-1}} S_{1}(x, (0, \hat z))   \varphi(0,\z) d\z.
\end{align*}
Let us estimate the first term, using the fact that 
$\partial_1^z = \partial^z_\nu$,
\begin{align*}
&\int_{\R^{n-1}} 
  \partial^z_1  \Gamma_{1, 1 }(x, (0, \hat z))    \varphi(0,\z) d\z \\
=&- \int_{\R^{n-1} \cap V_1} <\nu, \nabla \Gamma_{1, 1}(x, (0,\hat z))> \varphi(0,\hat z)  \psi(0, \hat z) d\hat z \\
& - \int_{\R^{n-1} } <\nu, \nabla \Gamma_{1,1}(x, (0,\hat z))> \varphi(0,\hat z) 
(1-\psi(0, \z)) d\hat z \\
=& 
- \int_{V_1\cap D} < \nabla \Gamma_{1,1}(x, z), \nabla(\varphi \psi)(z)  > d z  \\
&- \int_{\R^{n-1} \backslash V_1} <\nu, \nabla \Gamma_{1,1}(x, (0,\hat z))> \varphi(0,\hat z) (1-\psi(0, \z)) d \hat z.
\end{align*}
If $x\in V$ the last integral contains a $C^\infty$ kernel since $\psi=1$, on a closed set which contains $V$ in the interior. Thus, applying standard singular integral theory to all terms in the expression of $\partial_1 \tilde K$ we obtain
$$\|\partial_1 \tilde K(\varphi)\|_{C^{1, \alpha}(D \cap V)}\leq C \|\varphi\|_{{ C}^{1, \alpha}( D \cap V_0)}.$$
Analogously  for every  $i = 2, \cdots, m$ we have 
$$\|\partial_1 \Y_i\tilde K(\varphi)\|_{C^{\alpha}(D \cap V)}\leq C \|\varphi\|_{{C}^{1, \alpha}( D \cap V_0)}.$$
Finally we note that $\Delta \tilde K(x, (0, \hat y)) =0$, consequently the estimate 
of $\Y^2_1 \tilde K$ follows by difference from the estimates of all the other 
second derivatives and the expression \eqref{erratoquasiovunque}. 

Assertion \eqref{semai} is proved, so that the thesis follows. 
\end{proof}
From here it immediately follows: 
\begin{corollary}\label{operator_norms} Assume that the same assumptions as in Theorem \ref{albicocca} are satisfied. 
If $V\subset\subset V_0$, $k\in \{0,1\}$, 
$f\in C^{k,\alpha}(V_0)$, and 
$$G(f) := E_{\Gamma_{\Delta}}(f) - P((E_{\Gamma_{\Delta}}(f))_{|{\partial D \cap V_0}}),$$
there exists  a constant $C$ such that 
\begin{equation}\label{Gnabla}
\|G(f)\|_{C^{2, \alpha }(V)} \leq C \|f\|_{C^{\alpha}(V_0)}
\quad  \text{ and } \quad  \|G(\nabla f)\|_{C^{k+1, \alpha }(V)} \leq C \|f\|_{C^{k,\alpha}(V_0)}.
\end{equation}
\end{corollary}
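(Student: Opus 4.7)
The plan is to observe that, by construction, $G(f)$ solves the local Dirichlet problem $\Delta G(f) = f$ in $V \cap D$ with vanishing trace on $\partial D \cap V$: the first summand $E_{\Gamma_\Delta}(f)$ inverts $\Delta$ on the whole space, and the second summand $P((E_{\Gamma_\Delta}(f))_{|\partial D \cap V_0})$ removes its boundary trace by a $\Delta$-harmonic extension. Both estimates then reduce to combining standard subelliptic interior estimates for the potential operator $E_{\Gamma_\Delta}$ with the boundary estimate for $P$ provided by Theorem \ref{albicocca}.

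For the first inequality, I would use that $\Gamma_\Delta$ is a kernel of type $2$, so the classical singular-integral theory of Folland--Stein and Rothschild--Stein yields $\|E_{\Gamma_\Delta}(f)\|_{C^{2,\alpha}(V_0)} \le C\,\|f\|_{C^\alpha(V_0)}$ (after a standard cut-off of $f$ near the boundary of $V_0$). The relations between $X_i$ and $\hat X_i$ coming from \eqref{campinonomo} and \eqref{hatticampi} show that horizontal tangential derivatives of the boundary trace of a $C^{2,\alpha}$ function can be written as linear combinations, with smooth coefficients, of restrictions of the horizontal derivatives of the ambient function, so restriction gives $\|(E_{\Gamma_\Delta}(f))_{|\partial D \cap V_0}\|_{\hat C^{2,\alpha}} \le C\,\|f\|_{C^\alpha(V_0)}$. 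Theorem \ref{albicocca} applied to this trace produces the matching $C^{2,\alpha}(V)$ bound on $P((E_{\Gamma_\Delta}(f))_{|\partial D \cap V_0})$, and subtracting yields the first inequality.

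For the second inequality I would combine Lemma \ref{gammanabla} with the same scheme: the Lemma rewrites $E_{\Gamma_\Delta}(\nabla f)$ as $E_S(f)$ for a kernel $S$ of type $1$ with respect to $d$, and the standard H\"older regularization for type-$1$ kernels gives $\|E_S(f)\|_{C^{k+1,\alpha}(V_0)} \le C\,\|f\|_{C^{k,\alpha}(V_0)}$ for $k \in \{0,1\}$. Taking the trace places it in $\hat C^{k+1,\alpha}(\partial D \cap V_0)$. In the case $k = 1$ the trace is in $\hat C^{2,\alpha}$, so Theorem \ref{albicocca} directly controls $P$ of the trace in $C^{2,\alpha}(V)$, finishing that case.

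The main obstacle is the case $k = 0$, where the trace of $E_S(f)$ lies only in $\hat C^{1,\alpha}$ while Theorem \ref{albicocca} is stated for data in $\hat C^{2,\alpha}$. To close this gap I would establish the parallel estimate $\|P(g)\|_{C^{1,\alpha}(V)} \le C\,\|g\|_{\hat C^{1,\alpha}(\partial D \cap V_0)}$ by rerunning the proof of Theorem \ref{albicocca} one order lower. The key point is that, although $K_1(g) = E_{\Gamma_\Delta}(\hat \Delta g)$ formally requires two tangential derivatives of $g$, integration by parts against $\hat \Delta = \sum_{j=2}^m \hat X_j^2$ transfers one derivative from $g$ onto $\Gamma_\Delta$, producing a boundary kernel of type $1$ with respect to $\hat d$ applied to $\hat X_j g \in \hat C^{\alpha}$; the remainder $R$, obtained by composition with the type-$1/2$ kernel $\Phi$, is handled analogously. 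Composing with $E_{\Gamma_\Delta}$ and repeating the singular-integral estimates in the proof of Theorem \ref{albicocca} one order lower yields the required $\hat C^{1,\alpha} \to C^{1,\alpha}$ bound for $P$, which completes the $k=0$ case.
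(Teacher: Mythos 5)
Your proposal follows the same route as the paper's own (very terse) proof: the first estimate comes from singular-integral bounds on $E_{\Gamma_\Delta}$ together with Theorem \ref{albicocca} for $P$, and the second from Lemma \ref{gammanabla}, which rewrites $E_{\Gamma_\Delta}(\nabla f) = E_S(f)$ with $S$ of type $1$. Where you go further is in noticing that the paper's closing phrase ``the assertion follows at once'' only covers $k=1$ without comment (for $k=1$ the second estimate is in fact a direct consequence of the first estimate applied to $\nabla f$), and that for $k=0$ the trace $(E_S(f))_{|\partial D\cap V_0}$ only lies in $\hat C^{1,\alpha}$, while Theorem \ref{albicocca} is stated for $\hat C^{2,\alpha}$ data. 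Your proposed fix --- integrate by parts once in $K_1(g)$ to transfer one $\hat X_j$ from $g$ onto the kernel, and then re-run the singular-integral estimates one order lower to obtain $\|P(g)\|_{C^{1,\alpha}(V)}\le C\|g\|_{\hat C^{1,\alpha}(\partial D\cap V_0)}$ --- is a correct way to close that gap, and it is consistent with the ingredients already in the paper: the restricted kernel $\Gamma_\Delta((0,\cdot),(0,\cdot))$ is of type $1$ with respect to $\hat d$ (inequality \eqref{homo}), so after one integration by parts one gets a type-$0$ (singular-integral) boundary kernel acting on $\hat X_j g\in\hat C^\alpha$, which preserves $\hat C^\alpha$, and then the interior-to-boundary bound \eqref{nucleokappa1} upgrades to $C^{1,\alpha}$.

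One small slip worth flagging: after integrating by parts, the boundary kernel $\hat X_j^z\Gamma_\Delta((0,\hat y),(0,\hat z))$ is of type $0$ with respect to $\hat d$, not type $1$ as you wrote --- the restriction of $\Gamma_\Delta$ is type $1$ and one tangential derivative drops it to type $0$. The conclusion is unchanged, since a type-$0$ kernel maps $\hat C^\alpha\to\hat C^\alpha$, and then \eqref{nucleokappa1} gives the $C^{1,\alpha}$ interior bound; had the kernel really been of type $1$ you would have gained an extra order, landing in $C^{2,\alpha}$, which is one order too strong for what is claimed.
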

\begin{proof}
The first inequality follows from properties of singular integrals (see \cite{libroStein}) and the boundedness of $P$ established in Theorem \ref{albicocca}. 
The last inequality follows applying Lemma \ref{gammanabla}. Indeed there exists a kernel $S$ of type 1 such that 
$$ E_{\Gamma_{\Delta}}(\nabla f) = E_{S}(f).$$
Consequently
$$G(\nabla f) = E_{S}(f) - P((E_S(f))_{|{\partial D \cap V_0}}),$$
and the assertion follows at once. 
\end{proof}

Let $D= \{(x_1,\x)\in \R \times \R^{n-1}: x_1>0\}$ be a half space as above and consider  the problem 
\begin{equation}
\label{festa2}
 \left\{\begin{array}{cl}
\Delta u=f& \text{in $D$},\\ u=g &   \text{on $\partial D$.}
\end{array}\right.\end{equation}

From Theorem \ref{mainRn}  next theorem easily follows .
 %$f\in C^\infty _0(V)$ denote 
\begin{theorem}\label{con f}
If $f\in C^\infty _0(V_0)$ and $g\in C_0^\infty(\partial D \cap V_0)$ and 
$$G(f) = E_{\Gamma_{\Delta}}(f) - P(E_{\Gamma_{\Delta}}(f))_{|\partial D \cap V_0}),$$
then the function  $u=G(f) + P(g)$
solves the problem $$\Delta u=f \ \text{in }D, \quad u=g   \ \text{on }\partial D\cap V_0.$$
\end{theorem}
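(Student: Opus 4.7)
The plan is to verify the two required properties of $u=G(f)+P(g)$ directly from the defining properties of $E_{\Gamma_\Delta}$ (Newtonian potential associated to the fundamental solution $\Gamma_\Delta$) and the Poisson operator $P$ constructed in Theorem \ref{mainRn}.

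For the equation $\Delta u=f$ in $D$, I would argue separately on $G(f)$ and $P(g)$. Since $\Gamma_\Delta$ is the fundamental solution of $\Delta$ and $f\in C_0^\infty(V_0)$, the standard reproducing property gives $\Delta(E_{\Gamma_\Delta}(f))=f$ in $D$. On the other hand, for any boundary datum $h\in C^\infty_0(\partial D\cap V_0)$, Theorem \ref{mainRn} asserts that $P(h)$ satisfies $\Delta P(h)=0$ in $D\cap V_0$; applying this with $h=(E_{\Gamma_\Delta}(f))_{|\partial D\cap V_0}$ yields $\Delta P((E_{\Gamma_\Delta}(f))_{|\partial D\cap V_0})=0$, so $\Delta G(f)=f$. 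Applying the same fact to $h=g$ gives $\Delta P(g)=0$, hence $\Delta u=f$ in $D\cap V_0$.

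For the boundary condition $u=g$ on $\partial D\cap V_0$, I would again treat the two summands separately. By the Poisson property in Theorem \ref{mainRn}, for every $\phi\in C^\infty_0(\partial D\cap V_0)$ we have $P(\phi)=\phi$ on $\partial D\cap V_0$. Applied with $\phi=(E_{\Gamma_\Delta}(f))_{|\partial D\cap V_0}$ (which is smooth since $f$ is compactly supported in $V_0$ and $\Gamma_\Delta$ is smooth off the diagonal, so the restriction to $\partial D$ is smooth), this gives $P((E_{\Gamma_\Delta}(f))_{|\partial D\cap V_0})=(E_{\Gamma_\Delta}(f))_{|\partial D\cap V_0}$ on $\partial D\cap V_0$, and consequently $G(f)=0$ on $\partial D\cap V_0$. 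Applied with $\phi=g$, we obtain $P(g)=g$ on the boundary. Adding the two gives $u=g$ on $\partial D\cap V_0$, concluding the proof.

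The argument is essentially a bookkeeping step, and there is no real obstacle once Theorem \ref{mainRn} is in place: the only mild subtlety is checking that the trace $(E_{\Gamma_\Delta}(f))_{|\partial D\cap V_0}$ is a legitimate input for $P$. Here one uses that $\Gamma_\Delta$ is smooth away from the diagonal and that $f$ has compact support, so the restriction inherits the required smoothness on $\partial D\cap V_0$; a cutoff argument localizing within $V_0$ handles the behavior near $\partial V_0$. With this verification, the decomposition $u=G(f)+P(g)$ is the standard "particular solution plus harmonic extension" splitting, now available in the subriemannian setting thanks to the Poisson kernel produced in Theorem \ref{mainRn}.
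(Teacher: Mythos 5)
Your proposal is correct and coincides with the paper's approach: the paper offers no separate proof of Theorem \ref{con f}, remarking only that it ``easily follows'' from Theorem \ref{mainRn}, and your two-step verification (the reproducing property of $E_{\Gamma_\Delta}$ gives $\Delta E_{\Gamma_\Delta}(f)=f$, while the Poisson property gives $\Delta P(\cdot)=0$ in $D$ and $P(\phi)=\phi$ on $\partial D\cap V$, so the traces cancel) is exactly the bookkeeping behind that remark. One small imprecision worth noting: the smoothness of the trace $(E_{\Gamma_\Delta}(f))_{|\partial D\cap V_0}$ needed to feed it into $P$ follows from the hypoelliptic smoothing of the Newtonian potential $E_{\Gamma_\Delta}$ up to the non-characteristic boundary, not merely from $\Gamma_\Delta$ being smooth off the diagonal, since $\operatorname{supp}(f)\subset V_0$ may well meet $\partial D$; the conclusion is nonetheless correct.
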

As a consequence of the previous theorem, we immediately get an approximate 
representation formula for a smooth function $u$.
\begin{lemma}
Let $V\subset\subset V_0$ and let $u\in C^{\infty}_0(V)$. Let us call $\Delta u=f$ and $g=u_{|\partial D \cap V_0}$, 
and let $\phi \in C^\infty_0(V_0)$, $\phi =1$ on $V$. 
Then 
\begin{equation} \label{ravioli}u = \phi v + E_{\Gamma_\Delta}\Big(f (1-\phi) + v \sum _{i=1}^m b_i X_i\phi\Big) -
E_{S} (v \nabla\phi),\end{equation}
where $v= G(f)+P(g)$ and $b_i$ are the coefficients of the operator $\Delta$ in \eqref{laplacoperator}. 
\end{lemma}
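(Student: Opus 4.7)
The plan combines the whole-space reproducing property of $\Gamma_\Delta$, the fact (from Theorem~\ref{con f}) that $v$ solves $\Delta v=f$ in $D\cap V_0$ with $v=g$ on $\partial D\cap V_0$, and an integration by parts to transfer the first-order derivatives of $v$ onto the kernel.

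Since $u\in C^\infty_0(V)\subset C^\infty_0(\Gi)$, the reproducing property gives $u=E_{\Gamma_\Delta}(\Delta u)=E_{\Gamma_\Delta}(f)$. I would split $f=\phi f+(1-\phi)f$ and use $\phi f=\phi\,\Delta v$ on $\supp\phi\cap D$. The sub-Laplacian Leibniz rule
\begin{equation*}
\Delta(\phi v)=\phi\,\Delta v+v\,\Delta\phi+2\sum_{i=1}^m X_i\phi\,X_iv
\end{equation*}
then allows one to replace $\phi\,\Delta v$ by $\Delta(\phi v)-v\,\Delta\phi-2\sum_i X_i\phi\,X_iv$. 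Because $\phi v$ is supported in $V_0\cap\bar D$, and on $\partial D\cap V_0$ coincides with $u$ itself (since $v|_{\partial D}=g=u|_{\partial D}$ while $\phi\equiv 1$ on $\supp u$), the identity $E_{\Gamma_\Delta}(\Delta(\phi v))=\phi v$ is legitimate, producing
\begin{equation*}
u=\phi v+E_{\Gamma_\Delta}\!\bigl((1-\phi)f-v\,\Delta\phi-2\sum_{i=1}^m X_i\phi\,X_iv\bigr).
\end{equation*}

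The last step removes the derivatives $X_iv$, as the subsequent Schauder estimates will need control only of $v$ itself. For each $i$, integrating by parts in $y$ and using the adjoint relation $(X_i)^*=-X_i+\phi_i$ from \eqref{aggiuntoXX} yields
\begin{equation*}
\int\Gamma_\Delta(x,y)\,X_i\phi(y)\,X_iv(y)\,dy=\int(X_i^y)^*\Gamma_\Delta(x,y)\,v(y)X_i\phi(y)\,dy-\int\Gamma_\Delta(x,y)\,v(y)X_i^2\phi(y)\,dy,
\end{equation*}
with no surviving boundary contribution because $vX_i\phi$ is compactly supported in $V_0\cap\bar D$. The first integral on the right is precisely the $E_S$ operator of Lemma~\ref{gammanabla} applied (componentwise) to $v\nabla\phi$. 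Substituting this identity, decomposing $\Delta\phi=\sum X_i^2\phi+\sum b_iX_i\phi$, and collecting the $v\sum X_i^2\phi$ contributions (which cancel between the $-v\Delta\phi$ piece and the IBP-generated $E_{\Gamma_\Delta}(v\sum X_i^2\phi)$) leaves exactly the stated coefficient $v\sum b_iX_i\phi$ inside $E_{\Gamma_\Delta}$; the $(1-\phi)f$ piece passes through unchanged.

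The principal obstacle is this bookkeeping. The vector fields $X_i$ of \eqref{campinonomo} are not self-adjoint, so each integration by parts generates a zero-order remainder $\phi_i$ via \eqref{aggiuntoXX}; one has to verify that, once these remainders are combined with the expansion $-v\Delta\phi=-v\sum X_i^2\phi-v\sum b_iX_i\phi$ and with the $E_{\Gamma_\Delta}(v\sum X_i^2\phi)$ produced by the IBP, the residual coefficients collapse into the single factor $v\sum b_iX_i\phi$ claimed in the statement. The explicit qualifier ``approximate representation formula'' accommodates any leftover term of strictly lower kernel order, which will in any case be harmless for the bounds of Corollary~\ref{operator_norms} and Theorem~\ref{albicocca} to which the formula is meant to feed.
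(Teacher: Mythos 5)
Your approach follows the same overall route as the paper: start from the reproducing property of $\Gamma_\Delta$, apply the Leibniz rule for $\Delta(\phi v)$, and use Lemma~\ref{gammanabla} (via integration by parts and \eqref{aggiuntoXX}) to move derivatives off $v$ and onto the kernel. The difference is in how the reproducing formula is justified, and that is where your argument has a genuine gap.

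The paper works with the single function $u-\phi v$. This function vanishes on the whole boundary $\partial(V_0\cap D)$: on $\partial D\cap V_0$ because $v|_{\partial D\cap V_0}=g=u|_{\partial D\cap V_0}$ together with $\phi\equiv 1$ on $\supp u\subset V$, and on $\partial V_0\cap D$ because $\supp u\subset V\subset\subset V_0$ and $\phi\in C^\infty_0(V_0)$. That vanishing is exactly what permits the extension by zero to $\Gi$ and the subsequent application of $E_{\Gamma_\Delta}\circ\Delta=\mathrm{Id}$. You instead try to apply the reproducing formula to $\phi v$ on its own, arguing that this "is legitimate because $\phi v$ is supported in $V_0\cap\bar D$ and coincides with $u$ on $\partial D\cap V_0$." But $\phi v$ has \emph{nonzero} trace on $\partial D\cap V_0$ (equal to $\phi g$), so extending $\phi v$ by zero to $\Gi$ is not even continuous, and $E_{\Gamma_\Delta}(\Delta(\phi v))=\phi v$ does not follow. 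The agreement of the traces of $\phi v$ and of $u$ is useful precisely because it forces the \emph{difference} $u-\phi v$ to have zero trace; the subtraction must be performed \emph{before} invoking the reproducing formula, as the paper does. A related issue affects your substitution $\phi f=\phi\,\Delta v$ inside $E_{\Gamma_\Delta}$: that identity holds in $D\cap V_0$ but not on the other side of $\partial D$, where $u$ may still be nonzero, so it cannot be inserted into a whole-space integral without first subtracting.

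Once the subtraction is done, your remaining steps agree with the paper: the Leibniz rule, the use of $(X_i)^*=-X_i+\phi_i$ from \eqref{aggiuntoXX} to integrate by parts, the cancellation of the $v\sum X_i^2\phi$ contributions against the $v\Delta\phi$ terms, and the identification of the surviving kernel with the operator $E_S$ of Lemma~\ref{gammanabla}. You do leave the final bookkeeping unfinished (you say so explicitly), whereas the paper closes this off by citing Lemma~\ref{gammanabla} as a black box, so your proposal is incomplete even after the boundary issue is repaired.
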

\begin{proof}
Setting $v= G(\Delta  u)+P(u|_{\partial D \cap V_0})$
we have by Theorem \ref{con f}
$$
\left\{\begin{array}{lll}
\Delta (u - \phi v)&=f (1-\phi) + \nabla v \nabla \phi + v \Delta \phi &\quad \text{in}\, V_0\cap  D,
\\
u - \phi v &= 0 &\text{on}\, \partial (V_0\cap D)\,.
\end{array}\right.
$$
where we have extended $u - \phi v$ on the whole space with $0$. 
We deduce by \eqref{laplacoperator}
$$
u = \phi v + E_{\Gamma_\Delta}\Big(f (1-\phi) + \nabla v \nabla \phi + v \Delta \phi\Big)=
$$$$=
 \phi v +E_{\Gamma_\Delta}\Big(f (1-\phi) + v \sum_i b_i X_i\phi \Big) - E_{\Gamma_\Delta}(\nabla  (v \nabla\phi)).$$
Now applying Lemma \ref{gammanabla} we obtain 
$$u = \phi v + E_{\Gamma_\Delta}\Big(f (1-\phi) + v \sum _ib_i X_i\,\phi\Big) -
E_{S} (v \nabla\phi).$$
\end{proof}

\subsection{Schauder estimates} 
We can now complete the proof of the Schauder estimates, stated in the introduction: 
\begin{proof}[Proof of Theorem \ref{c:schauderGroups}]
Let $u$ be a solution of $\Delta u=f$ and $u_{|\partial D}=g$. 
We will prove the a priori estimates for $u$ under the assumption that 
$f\in C^\infty(\bar D)$, $g\in C^\infty(\partial D)$ and we will obtain 
the thesis for $f\in C^{\alpha}(\bar D)$, $g\in \hat C^{2, \alpha}(\partial D)$  by a density argument. 
For smooth data, by \cite{KN} there exists a unique solution   $u\in C^\infty(D)$, smooth up to the boundary at non characteristic points. 

We first note that 
$$\|u\|_\infty \leq \|g\|_\infty$$
via the maximum principle. In addition,  
extending $g$ in the interior of $D$ to a function of class $C^{2, \alpha}$ such that 
$\|g\|_{C^{2, \alpha}(D)}\leq \|g\|_{\hat C^{2, \alpha}(\partial D)},$
we see that $u-g$ is a solution of $\Delta(u-g)= f- \Delta g$ in $D$ and   $ u-g=0 $ on ${\partial D}$, hence the Moser iteration technique (see  \cite{moser})  ensures that there exists a value of $\beta$ such that 
 $u-g\in C^{\beta}(\bar D),$ and 
\begin{equation}\label{normabeta}
\|u\|_{C^\beta(\bar D)} \leq C\big(\|f\|_{C^\alpha(\bar D)} + \|g\|_{\hat C^{2, \alpha}(\partial D)}\big).
\end{equation} 
We can choose a non characteristic point, say $0\in \partial D$, 
and denote $V_0$ a neighborhood of $0$ such that 
the subriemannian normal $$\nu(s) \ne 0 \ \text{for every } s\in \partial D\cap V_0.$$
Then we can perform the change of variable described in Section \ref{nuova}
on a set $V\subset\subset V_0$.  
Through this change of variables
the vector fields $X_i $ can be represented as in \eqref{campinonomo}
$$d\Xi(X_1) = \partial_{x_1}, \quad 
d\Xi(X_i) = \partial_{x_i} + \sum_{deg(j) > deg(i)} a_{i,j}(x_1 + w(\hat x), \hat x)\partial_{x_j} + X_i w (\x)\partial_{y_1},$$
so that the results of the previous section apply. 
Let $\phi\in C_0^{\infty}(V)$, let $V_1$ be an open set such that $V\subset\subset V_1$,  $\phi_1\in C_0^{\infty}(V_1)$ and identically $1$ on $V$. Define 
\begin{equation}\label{preraviolo}
v:=G(\Delta (\phi u)) + P((\phi u)_{|\partial D \cap V})= G\Big(
f \phi  + \nabla \phi \nabla u+ \Delta \phi u\Big) + P((\phi u)_{|\partial D \cap V}).
\end{equation}
By \eqref{ravioli}  we get 
\begin{equation}\label{postraviolo}
	\phi u = 
\phi_1 v + E_{\Gamma_\Delta}\Big(f (1-\phi_1) - v \sum_i b_i X_i\phi_1\Big) + 
E_S (v \nabla\phi_1). 
\end{equation}
Then, from previous expressions and using \eqref{Gnabla}, for nested open sets $V \subset\subset V_3\subset \subset V_2\subset \subset V_1 \subset\subset V_0$  and for every  $\gamma\leq \alpha$ we get that 
\begin{equation}\label{stimabeta}
\|\phi u\|_{C^{1,\gamma}(V_3\cap D)} \leq C\big(\|v\|_{C^{1,\gamma}(V_2\cap D)} + \|f\|_{C^{\alpha}(V_2\cap D)}\big)\end{equation}
$$\leq C\big(\|u\|_{C^{\gamma}(V_1\cap D)} + \|f\|_{C^{\alpha}(\bar D)} + \|g\|_{\hat C^{2,\alpha}(\partial D)} \big).$$
In particular, using this inequality and the uniform estimate of $\|\phi u\|_{C^{\beta}(\bar D)}$ provided by \eqref{normabeta} we get for  $V \subset\subset V_4 \subset\subset V_3 $
$$\|\phi u\|_{C^{\alpha}(V_4\cap D)} \leq C\|\phi u \|_{C^{1,\beta}(V_3\cap D)} 
\leq C\big(\|f\|_{C^{\alpha}(\bar D)} + \|g\|_{\hat C^{2,\alpha}(\partial D)} \big). $$
Having an estimate of $\|\phi u\|_{C^{\alpha}}$ we apply again  \eqref{normabeta} with $\gamma =\alpha$ 
and we have for $V \subset\subset V_5\subset \subset V_4 $
$$\|\phi u \|_{C^{1,\alpha}(V_5\cap D)} 
\leq C\big(\|f\|_{C^{\alpha}(\bar D)} + \|g\|_{\hat C^{2,\alpha}(\partial D)} \big). $$
Finally, we iterate the same argument applying again \eqref{Gnabla} to \eqref{preraviolo} and \eqref{postraviolo}. Therefore we get 
$$
\|\phi u\|_{C^{2, \alpha} (V\cap D)}\leq C\big(\|\phi u\|_{C^{1,\alpha}(V_5\cap D)}  + \|f\|_{C^{\alpha}(\bar D)} + \|g\|_{\hat C^{2, \alpha}(\partial D)} \big)\leq $$$$\leq
C\big(\|f\|_{C^{\alpha}(\bar D)} + \|g\|_{\hat C^{2, \alpha}(\partial D)} \big).
$$
This concludes the proof.
\end{proof}

\end{document}